\newcommand{\cb}{\color{blue}}
\newtheorem{thm}{Theorem}[section]
\newtheorem{prop}[thm]{Proposition}
\newtheorem{lem}[thm]{Lemma}
\newtheorem{lem-def}[thm]{Lemma-Definition}
\newtheorem{cor}[thm]{Corollary}
\theoremstyle{definition}
\newtheorem{rmk}[thm]{Remark}
\newtheorem{dfn}[thm]{Definition}
\numberwithin{equation}{section}
\newcommand{\nc}{\newcommand}
\nc{\on}{\operatorname}
\nc{\fraka}{{\mathfrak a}} \nc{\bba}{{\mathbf a}}
\nc{\frakb}{{\mathfrak b}}
\nc{\frakc}{{\mathfrak c}}
\nc{\frakd}{{\mathfrak d}}
\nc{\frake}{{\mathfrak e}}
\nc{\frakf}{{\mathfrak f}}
\nc{\frakg}{{\mathfrak g}}
\nc{\frakh}{{\mathfrak h}}
\nc{\fraki}{{\mathfrak i}}
\nc{\frakj}{{\mathfrak j}}
\nc{\frakk}{{\mathfrak k}}
\nc{\frakl}{{\mathfrak l}}
\nc{\frakm}{{\mathfrak m}}
\nc{\frakn}{{\mathfrak n}}
\nc{\frako}{{\mathfrak o}}
\nc{\frakp}{{\mathfrak p}}
\nc{\frakq}{{\mathfrak q}}
\nc{\frakr}{{\mathfrak r}}
\nc{\fraks}{{\mathfrak s}}
\nc{\frakt}{{\mathfrak t}}
\nc{\fraku}{{\mathfrak u}}
\nc{\frakv}{{\mathfrak v}}
\nc{\frakw}{{\mathfrak w}}
\nc{\frakx}{{\mathfrak x}}
\nc{\fraky}{{\mathfrak y}}
\nc{\frakz}{{\mathfrak z}}
\nc{\frakA}{{\mathfrak A}}
\nc{\frakB}{{\mathfrak B}}
\nc{\frakC}{{\mathfrak C}}
\nc{\frakD}{{\mathfrak D}}
\nc{\frakE}{{\mathfrak E}}
\nc{\frakF}{{\mathfrak F}}
\nc{\frakG}{{\mathfrak G}}
\nc{\frakH}{{\mathfrak H}}
\nc{\frakI}{{\mathfrak I}}
\nc{\frakJ}{{\mathfrak J}}
\nc{\frakK}{{\mathfrak K}}
\nc{\frakL}{{\mathfrak L}}
\nc{\frakM}{{\mathfrak M}}
\nc{\frakN}{{\mathfrak N}}
\nc{\frakO}{{\mathfrak O}}
\nc{\frakP}{{\mathfrak P}}
\nc{\frakQ}{{\mathfrak Q}}
\nc{\frakR}{{\mathfrak R}}
\nc{\frakS}{{\mathfrak S}}
\nc{\frakT}{{\mathfrak T}}
\nc{\frakU}{{\mathfrak U}}
\nc{\frakV}{{\mathfrak V}}
\nc{\frakW}{{\mathfrak W}}
\nc{\frakX}{{\mathfrak X}}
\nc{\frakY}{{\mathfrak Y}}
\nc{\frakZ}{{\mathfrak Z}}
\nc{\bbA}{{\mathbb A}}
\nc{\bbB}{{\mathbb B}}
\nc{\bbC}{{\mathbb C}}
\nc{\bbD}{{\mathbb D}}
\nc{\bbE}{{\mathbb E}}
\nc{\bbF}{{\mathbb F}} \nc{\bbf}{{\mathbf f}}
\nc{\bbG}{{\mathbb G}}
\nc{\bbH}{{\mathbb H}}
\nc{\bbI}{{\mathbb I}}
\nc{\bbJ}{{\mathbb J}}
\nc{\bbK}{{\mathbb K}}
\nc{\bbL}{{\mathbb L}}
\nc{\bbM}{{\mathbb M}}
\nc{\bbN}{{\mathbb N}}
\nc{\bbO}{{\mathbb O}}
\nc{\bbP}{{\mathbb P}}
\nc{\bbQ}{{\mathbb Q}}
\nc{\bbR}{{\mathbb R}}
\nc{\bbS}{{\mathbb S}}
\nc{\bbT}{{\mathbb T}}
\nc{\bbU}{{\mathbb U}}
\nc{\bbV}{{\mathbb V}}
\nc{\bbW}{{\mathbb W}}
\nc{\bbX}{{\mathbb X}}
\nc{\bbY}{{\mathbb Y}}
\nc{\bbZ}{{\mathbb Z}}
\nc{\calA}{{\mathcal A}}
\nc{\calB}{{\mathcal B}}
\nc{\calC}{{\mathcal C}}
\nc{\calD}{{\mathcal D}}
\nc{\calE}{{\mathcal E}}
\nc{\calF}{{\mathcal F}}
\nc{\calG}{{\mathcal G}}
\nc{\calH}{{\mathcal H}}
\nc{\calI}{{\mathcal I}}
\nc{\calJ}{{\mathcal J}}
\nc{\calK}{{\mathcal K}}
\nc{\calL}{{\mathcal L}}
\nc{\calM}{{\mathcal M}}
\nc{\calN}{{\mathcal N}}
\nc{\calO}{{\mathcal O}}
\nc{\calP}{{\mathcal P}}
\nc{\calQ}{{\mathcal Q}}
\nc{\calR}{{\mathcal R}}
\nc{\calS}{{\mathcal S}}
\nc{\calT}{{\mathcal T}}
\nc{\calU}{{\mathcal U}}
\nc{\calV}{{\mathcal V}}
\nc{\calW}{{\mathcal W}}
\nc{\calX}{{\mathcal X}}
\nc{\calY}{{\mathcal Y}}
\nc{\calZ}{{\mathcal Z}}
\nc{\scrA}{{\mathscr A}}
\nc{\scrB}{{\mathscr B}}
\nc{\scrR}{{\mathscr R}}
\nc{\Bmu}{\mbox{$\raisebox{-0.59ex}{$l$}\hspace{-0.18em}\mu\hspace{-0.88em}\raisebox{-0.98ex}{\scalebox{2}{$\color{white}.$}}\hspace{-0.416em}\raisebox{+0.88ex}{$\color{white}.$}\hspace{0.46em}$}{}}
\nc{\bnu}{{\bar{ \nu}}}
\nc{\olO}{\bar{\calO}}
\nc{\al}{{\alpha}} 
\nc{\be}{{\beta}}
\nc{\ga}{{\gamma}} \nc{\Ga}{{\Gamma}}
 \nc{\hGa}{\hat{\Gamma}}
\nc{\ve}{{\varepsilon}} 
\nc{\la}{{\lambda}} \nc{\La}{{\Lambda}}
\nc{\om}{\omega} \nc{\Om}{\Omega} 
\nc{\sig}{{\sigma}} \nc{\Sig}{{\Sigma}}
\nc{\tnb}{\psi_{\rm tame}}
\nc{\oM}{\overline{{M}}}
\nc{\op}{{\on{op}}}
\nc{\ad}{{\on{ad}}}
\nc{\alg}{{\on{alg}}}
\nc{\Ad}{{\on{Ad}}}
\nc{\Adm}{{\on{Adm}}} \nc{\aff}{{\on{af}}}
\nc{\Aut}{{\on{Aut}}}
\nc{\Bun}{{\on{Bun}}}
\nc{\cha}{{\on{char}}}
\nc{\der}{{\on{der}}}
\nc{\Der}{{\on{Der}}}
\nc{\diag}{{\on{diag}}}
\nc{\End}{{\on{End}}}
\nc{\Fl}{{\calF\!\ell}}
\nc{\Tr}{{\on{Transp}}}
\nc{\TR}{{\calT\!\calR}}
\nc{\Gal}{{\on{Gal}}}
\nc{\Gr}{{\on{Gr}}}
\nc{\rH}{{\on{H}}}
\nc{\Hom}{{\on{Hom}}}
\nc{\IC}{{\on{IC}}}
\nc{\id}{{\on{id}}}
\nc{\Id}{{\on{Id}}}
\nc{\ind}{{\on{ind}}}
\nc{\Ind}{{\on{Ind}}}
\nc{\Lie}{{\on{Lie}}}
\nc{\Pic}{{\on{Pic}}}
\nc{\pr}{{\on{pr}}}
\nc{\Res}{{\on{Res}}}
\nc{\res}{{\on{res}}} \nc{\Sat}{{\on{Sat}}}
\nc{\s}{{\on{sc}}}
\nc{\drv}{{\on{der}}}
\nc{\sgn}{{\on{sgn}}}
\nc{\Spec}{{\on{Spec}}}\nc{\Spf}{\on{Spf}} 
\nc{\Sph}{\on{Sph}}
\nc{\St}{{\on{St}}}
\nc{\tr}{{\on{tr}}}
\nc{\Mod}{{\mathrm{-Mod}}}
\nc{\Hilb}{{\on{Hilb}}} 
\nc{\Ext}{{\on{Ext}}} 
\nc{\vs}{{\on{Vec}}}
\nc{\ev}{{\on{ev}}}
\nc{\nO}{{\breve{\calO}}}
\nc{\tS}{{\tilde{S}}}
\nc{\spe}{{\on{sp}}}
\nc{\loc}{{\on{loc}}}
\nc{\nscrR}{{\mathscr{R}^{\on{nr}}}}
\nc{\GL}{{\on{GL}}}
\nc{\U}{{\on{U}}}
\nc{\Gl}{\on{Gl}} 
\nc{\GSp}{{\on{GSp}}}
\nc{\gl}{{\frakg\frakl}}
\nc{\SL}{{\on{SL}}} 
\nc{\SU}{{\on{SU}}} 
\nc{\SO}{{\on{SO}}}
\nc{\PGL}{{\on{PGL}}}
\nc{\Conv}{{\on{Conv}}}
\nc{\Rep}{{\on{Rep}}}
\nc{\Dom}{{\on{Dom}}}
\nc{\red}{{\on{red}}}
\nc{\act}{{\on{act}}}
\nc{\nr}{{\on{nr}}}
\nc{\ctf}{{\on{ctf}}}
\nc{\str}{{\on{-}}} 
\nc{\os}{{\bar{s}}}
\nc{\oeta}{{\bar{\eta}}}
\nc{\hookto}{\hookrightarrow}
\nc{\longto}{\longrightarrow}
\nc{\leftto}{\leftarrow}
\nc{\onto}{\twoheadrightarrow}
\nc{\lonto}{\twoheadleftarrow}
\nc{\uG}{{\underline{G}}}
\nc{\uA}{{\underline{A}}}
\nc{\uS}{{\underline{S}}}
\nc{\uT}{{\underline{T}}}
\nc{\uM}{{\underline{M}}}
\nc{\uP}{{\underline{P}}}
\nc{\uB}{{\underline{B}}}
\nc{\uN}{{\underline{N}}}
\nc{\ucG}{{\underline{\calG}}}
\nc{\ucA}{{\underline{\calA}}}
\nc{\ucS}{{\underline{\calS}}}
\nc{\ucT}{{\underline{\calT}}}
\nc{\ucM}{{\underline{\calM}}}
\nc{\ucP}{{\underline{\calP}}}
\nc{\ucN}{{\underline{\calN}}}
\nc{\ucH}{{\underline{\calH}}}
\nc{\uH}{{\underline{H}}}
\nc{\AffSch}{{\on{AffSch}}}
\nc{\bF}{{\breve{F}}}
\nc{\oFl}{{\overline{\Fl}}} 
\nc{\bU}{{\overline{U}}}
\nc{\tGr}{{\tilde{\Gr}}}
\nc{\cGr}{\calG\! r}
\nc{\oGr}{\overline{\on{Gr}}} 
\nc{\ocGr}{\overline{\calG\! r}}
\nc{\co}{{\colon}}
\nc{\sch}[1]{(Sch/{#1})}
\nc{\HypLoc}[1]{HypLoc({#1})}
\nc{\ohtimes}{\stackrel{!}{\otimes}}
\nc{\boxtilde}{\widetilde{\boxtimes}}
\nc{\vstar}{{\varhexstar}}
\nc{\Div}{\on{Div}}
\nc{\ucI}{{\underline{\calI}}}
\nc{\bslash}{\backslash}
\nc{\algQl}{{\bar{\bbQ}_\ell}}
\nc{\sF}{{\bar{F}}}
\nc{\nF}{{\breve{F}}}
\nc{\nW}{{W^{\on{nr}}}}
\nc{\sk}{{\bar{k}}}
\nc{\cont}{\on{c}}
\nc{\Supp}{\on{Supp}}
\nc{\blt}{\bullet}  
\nc{\dom}{\on{dom}}
\nc{\scon}{{\on{sc}}} 
\nc{\Affine}{\on{Aff}} 
\nc{\nscrA}{\mathscr{A}^{\on{nr}}} 
\nc{\nfraka}{{\bbf^{\on{nr}}}}
\nc{\ran}{{\rangle}}
\nc{\lan}{{\langle}}
\nc{\bk}{{\bar{k}}}
\nc{\tF}{{\tilde{F}}}
\nc{\sS}{{\bar{S}}}
\nc{\LG}{{^\text{L}\hspace{-0.04cm}G}}
\nc{\LL}{{^\text{L}\hspace{-0.07cm}L}}
\nc{\pot}[1]{ [\hspace{-0,5mm}[ {#1} ]\hspace{-0,5mm}] }
\nc{\rpot}[1]{ (\hspace{-0,7mm}( {#1} )\hspace{-0,7mm}) }
\nc{\defined}{\hspace{0.1cm}\stackrel{\text{\tiny \rm def}}{=}\hspace{0.1cm}}
\begin{document}

\title[Normality and Cohen-Macaulayness]{Normality and Cohen-Macaulayness \\ of parahoric local models}
\author[T.\,J.\,Haines and T.\,Richarz]{by Thomas J. Haines and Timo Richarz}

\address{Department of Mathematics, University of Maryland, College Park, MD 20742-4015, DC, USA}
\email{tjh@math.umd.edu}

\address{Fachbereich Mathematik, TU Darmstadt, Schlossgartenstrasse 7, 64289 Darmstadt, Germany}
\email{richarz@mathematik.uni-darmstadt.de}

\thanks{Research of T.R.~partially funded by the Deutsche Forschungsgemeinschaft (DFG, German Research Foundation) - 394587809.}

\maketitle

\begin{abstract} 
We study the singularities of integral models of Shimura varieties and moduli stacks of shtukas with parahoric level structure.
More generally our results apply to the Pappas-Zhu and Levin mixed characteristic parahoric local models, and to their equal characteristic analogues. 
For any such local model we prove under minimal assumptions that the entire local model is normal with reduced special fiber and, if $p > 2$, it is also Cohen-Macaulay. This proves a conjecture of Pappas and Zhu, and shows that the integral models of Shimura varieties constructed by Kisin and Pappas are Cohen-Macaulay as well. 



\end{abstract}

\tableofcontents


\thispagestyle{empty}

\section{Introduction}
Parahoric local models are flat projective schemes over a discrete valuation ring which are designed to model the \'etale local structure in the bad reduction of Shimura varieties \cite{KP18} or moduli stacks of shtukas \cite{AH19} with parahoric level structure. Thus, local models provide a tool to study the singularities appearing in the reduction of these spaces. The subject dates back to early 1990's, starting with the work Deligne-Pappas \cite{DP94}, Chai-Norman \cite{CN90, CN92} and de Jong \cite{dJ93}, and was formalized in the book of Rapoport-Zink \cite{RZ96}. For further details and references, we refer to the survey of Pappas, Rapoport and Smithling \cite{PRS13}.

The simplest example is the case of the classical modular curve $X_0(p)$ with $\Ga_0(p)$-level structure. In this case, the local model is a $\bbP^1_{\bbZ_p}$ blown up in the origin of the special fiber $\bbP^1_{\bbF_p}$. This models the reduction modulo $p$ of $X_0(p)$, which is visualized as the famous picture of two irreducible components crossing transversally at the supersingular points.

In a breakthrough, a group theoretic construction for parahoric local models in mixed characteristic was given by Pappas-Zhu \cite{PZ13}: fix a local model triple $(G,\{\mu\},K)$ where $G$ is a reductive group over $\bbQ_p$, $K\subset G(\bbQ_p)$ is a parahoric subgroup and $\{\mu\}$ is a (not necessarily minuscule) geometric conjugacy class of one-parameter subgroups in $G$. Under a tamely ramified hypothesis on $G$ which was further relaxed by Levin \cite{Lev16}, Pappas and Zhu construct a flat projective $\bbZ_p$-scheme $M_{\{\mu\}}=M_{(G,\{\mu\},K)}$ called the (parahoric) local model. Assume now that $p$ does not divide the order of $\pi_1(G_\der)$, e.g., the case $G=\PGL_n$ and $p\!\mid\!n$ is excluded. Then it is shown in \cite{PZ13} (cf.~also \cite{Lev16}) that $M_{\{\mu\}}$ is normal with reduced special fiber. Further, they conjecture that $M_{\{\mu\}}$ is Cohen-Macaulay, cf.~\cite[Rmk.~9.5 (b)]{PZ13}. This conjecture was proven in \cite[Cor.~9.4]{PZ13} when $K$ is very special (in which case the special fiber of $M_{\{\mu\}}$ is irreducible), and by He \cite{He13} when the group $G$ is unramified, $K$ is an Iwahori subgroup and $\{\mu\}$ is minuscule. Also some explicit special cases were treated earlier by G\"ortz (unpublished) following the method outlined in \cite[4.5.1]{Go01}. 
 
The purpose of this article is to show, for a general local model triple $(G,\{\mu\},K)$ as above under minimal assumptions (cf.~Remark \ref{optimal_rmk} below), that the local model $M_{\{\mu\}}$ itself is normal with reduced special fiber, and, if $p>2$, also that $M_{\{\mu\}}$ is Cohen-Macaulay, cf.~Theorem \ref{special_fiber_thm} below. 
This recovers and extends the results on the geometry of local models obtained in \cite{PZ13} and \cite{Lev16}.
In particular, under these assumptions we obtain the Cohen-Macaulayness of $M_{\{\mu\}}$ for all not necessarily minuscule $\{\mu\}$; this answers a question of He, and seems to be new even in the case of $G=\GL_2$ (where the assumption $p > 2$ alone is sufficient). 
Theorem \ref{special_fiber_thm} also seems to be the first Cohen-Macaulayness result which applies to ramified groups and Iwahori level structure.

Our proof of normality follows the original argument of \cite{PZ13} relying on the Coherence Conjecture proven by Zhu \cite{Zhu14}. The assumption on the normality of Schubert varieties inside the $\mu$-admissible locus is satisfied whenever $p\nmid |\pi_1(G_\der)|$ by Pappas-Rapoport \cite{PR08}, but also in other cases of interest, e.g., whenever $K$ is an Iwahori subgroup and $\bar{\mu} \in X_*(T)_I$ is minuscule for the \'echelonnage root system (e.g. $G$ is unramified and $\mu$ is minuscule for the absolute roots), cf.~\cite{HLR}. The proof of the Cohen-Macaulayness of $M_{\{\mu\}}$ is very different from the approaches in \cite[4.5.1]{Go01} and \cite{He13}. Here we reduce the problem first to equal characteristic where we can use the result \cite[Thm.~6.5]{Zhu14} (which assumes $p>2$) to apply the powerful method of Frobenius splittings on the whole local model. This reduces us to the situation of a scheme over a finite field whose complement along some divisor $D$ is Cohen-Macaulay, and which is Frobenius split relative to $D$.
Surprisingly (at least for the authors), the Cohen-Macaulayness at points lying on $D$ then follows from a very nice homological algebra lemma which we found in the work of Blickle-Schwede \cite[Ex.~5.4]{BS13} (cf.~Proposition \ref{CM_split} below).

\section{Main result}\label{main_result_sec}
Let $p\in\bbZ_{>0}$ be a prime number. Let $F$ be a non-archimedean local field with ring of integers $\calO_F$ and finite residue field $k=k_F$ of characteristic $p$, i.e. either $F/\bbQ_p$ is a finite extension or $F\simeq k\rpot{t}$ is a local function field. We fix a separable closure $\sF/F$.

We fix a triple $(G,\{\mu\},\calG_\bbf)$ where $G$ is a connected reductive $F$-group, $\{\mu\}$ a (not necessarily minuscule) conjugacy class of geometric cocharacters defined over a finite separable extension $E/F$, and $\calG_\bbf$ is a parahoric $\calO_F$-group scheme associated with some facet $\bbf\subset \scrB(G,F)$.

We put the following assumption on $G$. If $F/\bbQ_p$, we assume that $G\simeq \Res_{K/F}(G_1)$ where $K/F$ is a finite extension, and $G_1$ is a connected reductive $K$-group which splits over a tamely ramified extension. If $F\simeq k\rpot{t}$, we assume that in the simply connected cover of the derived group $G_\scon\simeq \prod_{j\in J}\Res_{F_j/F}(G_j)$ each absolutely almost simple factor $G_j$ splits over a tamely ramified extension of $F_j$. Here $J$ is a finite index set, and each $F_j/F$ is a finite separable field extension.

Attached to the triple $(G,\{\mu\},\calG_\bbf)$ is the flat projective $\calO_E$-scheme
\[
M_{\{\mu\}}=M_{(G,\{\mu\},\calG_\bbf)},
\]
called the ({\em flat}) \emph{local model}. It is defined in \cite{PZ13, Lev16} (cf.~also \cite[\S 4]{HRb}) if $F/\bbQ_p$, and in \cite{Zhu14, Ri16} if $F\simeq k\rpot{t}$. The generic fiber is the Schubert variety $\Gr_G^{\leq \{\mu\}}\to \Spec(E)$ in the affine Grassmannian of $G\otimes_F E$ associated with the class $\{\mu\}$. The special fiber is equidimensional, but not irreducible in general, and is equipped with a closed embedding
\begin{equation}
\overline{M}_{\{\mu\}}:=M_{\{\mu\}}\otimes_{\calO_E}k_E \;\hookto\;\Fl_{G^\flat,\bbf^\flat}\otimes_kk_E,
\end{equation}
where $k_E$ is the residue field of $E$. Further, if $F\simeq k\rpot{t}$, we have $(G^\flat,\bbf^\flat)=(G,\bbf)$. If $F/\bbQ_p$, the pair $(G^\flat,\bbf^\flat)$ is an equal characteristic analogue over a local function field $k\rpot{u}$ of the pair $(G_1,\bbf_1)$ where $\bbf_1$ is the facet corresponding to $\bbf$ under $\scrB(G,F)\simeq \scrB(G_1,K)$, cf.~\S\ref{recollections} for details. By \cite[Thm.~5.14]{HRb}, with no restriction on $p$, we have for the reduced geometric special fiber 
\[
\big(\overline{M}_{\{\mu\}}\otimes_{k_E}\bar{k}\big)_\red\;=\;\calA(G,\{\mu\}),
\]
where $\calA(G,\{\mu\})\subset \Fl_{G^\flat,\bbf^\flat}\otimes_k\bar{k}$ denotes the union of the $(\bbf^\flat,\bbf^\flat)$-Schubert varieties indexed by the $\{\mu\}$-admissible set of Kottwitz-Rapoport.

The following theorem proves results on the geometry of local models under weaker hypotheses than the hypothesis $p\nmid |\pi_1(G_\der)|$ in \cite[Thm.~9.3]{PZ13} and \cite[Thm.~4.3.2]{Lev16}. We recover as very special cases the results of \cite{PZ13} in this direction and those of \cite{He13}, which treats unramified groups, Iwahori level, and minuscule $\{\mu\}$\footnote{But we note that \cite{He13} does not require the assumption $p>2$.}.

\begin{thm} \label{special_fiber_thm}
Let $(G,\{\mu\},\calG_\bbf)$ be as above, and assume that $\Gr_{G,\bar{F}}^{\leq \{\mu\}}$ is normal and that all {\em maximal} $(\bbf^\flat,\bbf^\flat)$-Schubert varieties inside $\calA(G,\{\mu\})$ are normal \textup{(}see Remark \ref{intro_rmk} ii\textup{)} below for situations when this is satisfied\textup{)}.
\begin{enumerate}
\item[i)] The special fiber $\overline{M}_{\{\mu\}}$ is geometrically reduced. More precisely, as closed subschemes of $\Fl_{G^\flat,\bbf^\flat}\otimes_k\bar{k}$, one has
\[
\overline{M}_{\{\mu\}}\otimes_{k_E}\bar{k}\;=\; \calA(G,\{\mu\}).
\]
Further, each irreducible component of $\overline{M}_{\{\mu\}}\otimes\bar{k}$ is normal, Cohen-Macaulay, with only rational singularities, and Frobenius split.
\item[ii)] The local model $M=M_{\{\mu\}}$ is normal and, if $p>2$, also Cohen-Macaulay with dualizing sheaf given by the double dual of the top differentials 
\[
\om_{M}\;=\;(\Om^{d}_{M/\calO_E})^{*,*},
\]
where $d=\dim(M_{E})$ is the dimension of the generic fiber.
\end{enumerate}
\end{thm}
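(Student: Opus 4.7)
For part (i), we already know by \cite[Thm.~5.14]{HRb} that the reduced geometric special fiber of $M_{\{\mu\}}$ equals $\calA(G,\{\mu\})$, so it suffices to prove geometric reducedness of $\overline{M}_{\{\mu\}}$ as a scheme. I would follow the strategy of \cite[\S 9]{PZ13}: equip the embedding of the special fiber into $\Fl_{G^\flat,\bbf^\flat}\otimes_k\bar k$ with the canonical ample line bundle $\calL$, use flatness of $M_{\{\mu\}}$ over $\calO_E$ to identify $\dim H^0(\calL^{\otimes n})$ on the generic and special fibers, then invoke Zhu's Coherence Conjecture \cite[Thm.~0.1]{Zhu14} to match the generic fiber dimension with $\dim H^0(\calA(G,\{\mu\}),\calL^{\otimes n})$. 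The normality hypothesis on the maximal $(\bbf^\flat,\bbf^\flat)$-Schubert varieties in $\calA(G,\{\mu\})$ rules out embedded components and multiplicities, promoting the inclusion $\calA(G,\{\mu\})\subseteq \overline{M}_{\{\mu\}}\otimes\bar k$ of reduced schemes to an equality of schemes. Each irreducible component is then a maximal Schubert variety, normal by hypothesis, and the remaining properties (Cohen-Macaulay, rational singularities, Frobenius split) for normal Schubert varieties in twisted affine flag varieties follow from \cite{PR08} and \cite[Thm.~6.5]{Zhu14}.

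For the normality in (ii), descent along $\bar F/E$ transfers the hypothesis that $\Gr^{\leq\{\mu\}}_{G,\bar F}$ is normal to normality of the generic fiber $M_{\{\mu\},E}$. Combined with the reducedness of the special fiber from (i), the standard criterion for normality of a flat family over a DVR yields normality of $M_{\{\mu\}}$.

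The heart of the theorem is the Cohen-Macaulayness in (ii) for $p>2$, which I would tackle in three steps. \emph{Reduction to equal characteristic}: in the Pappas-Zhu/Levin construction, the formal completions of $M_{(G,\{\mu\},\calG_\bbf)}$ at closed points of its special fiber match, up to smooth factors, those of its equal characteristic analogue $M^\flat$ over $k\pot{u}$, so it suffices to prove CM for $M^\flat$. \emph{Frobenius splitting}: by \cite[Thm.~6.5]{Zhu14}, which requires $p>2$, the equal characteristic local model $M^\flat$ is globally Frobenius split compatibly with its special fiber divisor $D:=\overline{M}^\flat$. \emph{Homological input}: the complement $M^\flat\setminus D$ is the generic fiber, a Schubert variety in an affine Grassmannian, hence Cohen-Macaulay. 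Proposition \ref{CM_split} (after \cite[Ex.~5.4]{BS13}) then asserts that a Noetherian scheme whose Cartier-divisor complement is CM and which is Frobenius split compatibly with that divisor is itself CM; applied to $M^\flat$, this gives the desired CM, hence also for $M_{\{\mu\}}$. For the dualizing sheaf, once $M_{\{\mu\}}$ is normal and Cohen-Macaulay, $\om_M$ is reflexive of rank one, agrees on the smooth locus with $\Om^{d}_{M/\calO_E}$, and so coincides with the unique such reflexive extension $(\Om^{d}_{M/\calO_E})^{*,*}$. The principal obstacle is the reduction to equal characteristic: precisely identifying the local geometry of the mixed characteristic model with its equal characteristic avatar requires the full force of the Pappas-Zhu/Levin construction and associated local model diagrams, while Steps 2 and 3 invoke a deep referenceable input and a clean homological lemma, respectively.
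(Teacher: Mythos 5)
Your outline for parts (i) and for normality in (ii) matches the paper. For the Cohen-Macaulayness in (ii), your overall strategy (reduce to equal characteristic, Frobenius split via Zhu's Theorem 6.5, apply the Blickle-Schwede lemma Proposition \ref{CM_split}) is the same as the paper's, but your \emph{reduction to equal characteristic} is not: you assert that the formal completions of the mixed-characteristic local model $M_{(G,\{\mu\},\calG_\bbf)}$ at closed points of its special fiber match, up to smooth factors, those of the equal-characteristic analogue $M^\flat$. That claim is not established in \cite{PZ13} or \cite{Lev16}, and is not what the paper uses. The paper's reduction is more elementary and sidesteps any comparison of local rings: by Lemma \ref{elementary_lem}(ii), since the generic fiber $M_E$ is already Cohen-Macaulay (it is a normal Schubert variety, Proposition \ref{normal_prop} i)), the Cohen-Macaulayness of $M$ over $\calO_E$ is \emph{equivalent} to Cohen-Macaulayness of the geometric special fiber $M_{\bar k}=\calA(G,\{\mu\})$. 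That geometric special fiber is a purely equal-characteristic object inside $\Fl_{G^\flat,\bbf^\flat}\otimes\bar k$, determined by combinatorial data (the admissible set, via \cite[Lem.~4.11]{HRb}) that is the same whether $F$ is mixed or equal characteristic. One therefore replaces $F$ by $k\rpot{t}$ and works with the equal-characteristic model $M^\flat$, whose special fiber is the same $\calA(G,\{\mu\})$; proving $M^\flat$ is Cohen-Macaulay gives Cohen-Macaulayness of $\calA(G,\{\mu\})$ again by Lemma \ref{elementary_lem}(ii), and then Cohen-Macaulayness of $M$ itself. Please replace your formal-completion claim by this fiberwise argument.

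Two smaller points. First, before applying \cite[Thm.~6.5]{Zhu14} the paper reduces to absolutely almost simple groups via the product decomposition \eqref{product_decom} of the admissible locus (together with Proposition \ref{normal_prop}(ii) to preserve the normality hypothesis in each factor); this step is missing from your sketch but is needed because Zhu's theorem is stated for absolutely almost simple groups. Second, Proposition \ref{CM_split} requires that $X$ be Frobenius split \emph{relative} to $D$, not \emph{compatibly} with $D$; Zhu's theorem (via Lemma \ref{key_lem_split} and Lemma \ref{split_lem}) gives compatibility of $M^\flat$ with $M^\flat_k$, and one then passes from compatible to relative splitting using Lemma \ref{nice_lem}. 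Your phrasing conflates these two notions, though the gap is easy to fill.
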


\begin{rmk} \label{intro_rmk} i) In view of the results in \cite{KP18} (resp.~\cite[\S 3]{AH19}) the corresponding integral models of Shimura varieties (resp.~moduli stacks of shtukas) with parahoric level structure are normal and Cohen-Macaulay as well.\smallskip\\
ii) The Schubert variety $\Gr_{G,\bar{F}}^{\leq \{\mu\}}$ and all Schubert varieties inside $\calA(G,\{\mu\})$ are known to be normal
\begin{itemize}
\item if $p\nmid |\pi_1(G_\der)|$ and $G\simeq \Res_{K/F}(G_1)$ with $G_1$ tamely ramified as above (without any restrictions on $\{\mu\}$), cf.~\cite[Thm.~8.4]{PR08}; \cite[Thm.\,5.14]{HRb}; end of $\S\ref{Absolutely_Simple_Red}$); 
\item if $\bar{\mu} \in X_*(T)_I$ is minuscule for the \'echelonnage root system, and the facet $\bbf$ contains a special vertex, e.g.,\,$G$ is unramified, $\calG_\bbf(\calO_F)$ an Iwahori subgroup and $\mu$ is minuscule (without any restrictions on $p$), cf.~\cite{HLR}. In general, $\bar{\mu}$ being minuscule for the \'echelonnage roots implies that $\mu$ is minuscule for the absolute roots, by \cite[$\S4.3$]{Hai18} (but not conversely).
\end{itemize}
We also remark that when ${\rm char}(F) = 0$, $\Gr^{\leq \{ \mu\}}_{G, \bar{F}}$ is known to be normal. 
However, 
when $p \mid |\pi_1(G_\der)|$, there are non-normal Schubert varieties in $\Fl_{G^\flat, \bbf^\flat}$, even for $G^\flat = {\rm PGL}_2$ over $\bar{\mathbb F}_2$, cf.\,\cite{HLR}.
\end{rmk}

Thus, Theorem \ref{special_fiber_thm}\,i) gives new cases of normal local models with reduced special fiber. The proof follows the original argument of Pappas-Zhu, using as a key input the Coherence Conjecture proved by Zhu \cite{Zhu14} (cf.~also \cite[\S9.2.2, (9.19)]{PZ13}). The application of the Coherence Conjecture is justified by our assumption on the normality of Schubert varieties inside $\calA(G,\{\mu\})$.

For ii), the normality of $M_{\{\mu\}}$ is an immediate consequence of i) by \cite[Prop.~9.2]{PZ13}. As mentioned above, the Cohen-Macaulayness of $M_{\{\mu\}}$ can be deduced from Proposition \ref{CM_split} below combined with the well-known theorem of Zhu \cite[Thm.~6.5]{Zhu14} which is also the key to the Coherence Conjecture. In particular, our method avoids using any finer geometric structure of the admissible locus $\calA(G,\{\mu\})$ as for example in \cite[\S4.5.1]{Go01} or \cite{He13}.

\begin{cor} \label{Invariance_Cor}
Let $(G', \{\mu'\}, \calG_{\bbf'})\to (G, \{\mu\}, \calG_{\bbf})$ a map of local model triples as above which induces $G'_\ad\simeq G_\ad$ on adjoint groups. Then the induced map on local models
$$
M_{(G', \{\mu'\}, \calG_{\bbf'})} \, \rightarrow \, M_{(G, \{\mu\}, \calG_{\bbf})} \otimes_{\calO_{E}} \calO_{E'},
$$
is a finite, birational universal homeomorphism where $E'/F$ denotes the reflex field of $\{\mu'\}$, naturally an overfield of $E$. In particular, if $M_{(G, \{\mu\}, \calG_{\bbf})}$ is normal \textup{(}see Theorem \ref{special_fiber_thm}\textup{)}, this map is an isomorphism.
\end{cor}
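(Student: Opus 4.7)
My plan is to verify the claim first on the geometric generic and special fibers over $\Spec(\calO_{E'})$, and then upgrade to the global statement via flatness and projectivity. Write $f\colon M':= M_{(G', \{\mu'\}, \calG_{\bbf'})} \to M_{(G, \{\mu\}, \calG_{\bbf})} \otimes_{\calO_E} \calO_{E'} =: M_0$ for the induced map, which exists by functoriality of the Pappas-Zhu/Levin construction. Since both sides are flat projective $\calO_{E'}$-schemes, once $f$ is shown to be surjective and universally injective on each geometric fiber with finite fibers, projectivity will promote the fibrewise quasi-finiteness to global finiteness, and the fibrewise universal-homeomorphism property will promote to a global one.

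On the generic fiber $f$ becomes the natural morphism of Schubert varieties $\Gr_{G', \bar{E}'}^{\leq\{\mu'\}} \to \Gr_{G, \bar{E}'}^{\leq\{\mu\}}$ induced by $G' \to G$. The hypothesis that $G' \to G$ induces an isomorphism on adjoint groups forces its kernel and cokernel to be central, so both Schubert varieties sit over the common adjoint Schubert variety $\Gr_{G_\ad, \bar{E}'}^{\leq\{\mu_\ad\}}$ along central isogenies. Since the center of a reductive group acts trivially on Schubert cells and their closures in the adjoint affine Grassmannian, the induced map on each connected component (indexed via the Kottwitz homomorphism by $\pi_1$) is an isomorphism onto its image; consequently $f$ is a finite birational universal homeomorphism on the generic fiber.

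For the special fiber I use the identification of reduced geometric special fibers with the admissible loci $\calA(G, \{\mu\})$ and $\calA(G', \{\mu'\})$ inside the relevant affine flag varieties, via \cite[Thm.~5.14]{HRb} and Theorem \ref{special_fiber_thm}(i). Since $G^\flat$ and $G'^\flat$ again have isomorphic adjoint groups and $\bbf^\flat$, $\bbf'^\flat$ correspond under the identification of Bruhat-Tits buildings, the Iwahori-Weyl groups of $G^\flat$ and $G'^\flat$ differ only in their translation sublattices by a central lattice; hence the $\{\mu'\}$-admissible set maps bijectively onto the $\{\mu\}$-admissible set. The same central-isogeny argument, applied $\bbf^\flat$-Schubert variety by $\bbf^\flat$-Schubert variety, then shows that $\calA(G', \{\mu'\}) \to \calA(G, \{\mu\}) \otimes \bar{k}_{E'}$ is also a finite birational universal homeomorphism.

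Combining the two fiber statements, $f$ is finite, surjective, universally injective, and birational on the integral scheme $M'$ (which is integral because it is flat over $\calO_{E'}$ with irreducible normal generic fiber); thus $f$ is a finite birational universal homeomorphism. For the final assertion, when $M_0$ is normal, Zariski's Main Theorem ensures that any finite birational morphism from an integral scheme into $M_0$ is an isomorphism. The main obstacle I expect is the careful bookkeeping of $\pi_1$-connected components of the affine flag varieties (with the inertia action) and of admissible sets in the extended affine Weyl group, needed to verify both that $f$ lands in the correct component of $M_0$ and that the admissible sets match up bijectively under the central isogeny; once these combinatorics are in hand, the fiberwise identifications and the flat-family upgrades are essentially formal.
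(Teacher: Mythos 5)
Your overall strategy is the same as the paper's: check the claim on the geometric generic and special fibers, then upgrade to a statement over $\calO_{E'}$ using properness and flatness. But two steps are glossed over in a way that hides genuine work.

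First, your claim that on each connected component (indexed via the Kottwitz map) the induced morphism of Schubert varieties is ``an isomorphism onto its image'' is not correct, and the correct weaker statement is exactly where the substance lies. For a central $G' \to G$ with $p \mid |\pi_1(G_\der)|$, the morphism $S_{w'} \to S_w$ of Schubert varieties in the twisted affine flag varieties is bijective and birational but \emph{not} in general an isomorphism, because the target need not be normal (cf.\ the $\PGL_2$ over $\bar{\bbF}_2$ example in Remark \ref{intro_rmk}). What one actually needs is Proposition \ref{Schubertadprop}: the map is a finite birational universal homeomorphism inducing an isomorphism on normalizations. Establishing this requires matching open cells explicitly (Lemma \ref{openSchubertad}: the map on root subgroups is the identity), matching orbit posets (Corollary \ref{Setad}), and then using quasi-finite + proper $\Rightarrow$ finite together with the normalizations. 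Saying ``the center acts trivially'' does not produce an isomorphism, and it does not by itself produce the universal homeomorphism either; this is more than ``bookkeeping,'' and you simply assert the conclusion.

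Second, for the final isomorphism assertion your appeal to Zariski's Main Theorem needs the target $M_0 = M_{(G,\{\mu\},\calG_\bbf)} \otimes_{\calO_E} \calO_{E'}$ to be normal, but the hypothesis only gives normality of $M_{(G,\{\mu\},\calG_\bbf)}$. Normality of a flat scheme over a DVR is not automatically preserved under base change along a possibly ramified extension $\calO_E \to \calO_{E'}$; the paper re-derives it by observing that the special fiber of $M_0$ is reduced (because the special fiber of $M$ is geometrically reduced by Theorem \ref{special_fiber_thm} i)), the generic fiber stays normal, and then invoking \cite[Prop.~9.2]{PZ13} (Lemma \ref{elementary_lem} i)). You should add this step; otherwise the isomorphism conclusion is not justified.
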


\begin{rmk}\label{optimal_rmk}
The assumption that $M_{(G,\{\mu\}, \calG_{\bbf})}$ is normal is equivalent to the assumption that its generic fiber ${\rm Gr}^{\leq \{\mu\}}_{G, \bar{F}}$ is normal and all maximal $(\bbf^\flat,\bbf^\flat)$-Schubert varieties inside $\calA(G,\{\mu\})$ are normal. One implication is given by Theorem \ref{special_fiber_thm} ii). For the other implication, 
apply Corollary \ref{Invariance_Cor} to a $z$-extension $G'$ of $G$ with $G'_{\rm der} = G'_{\rm sc}$ and invoke Remark \ref{intro_rmk} ii) for $G'$. Thus the assumption on the normality of Schubert varieties appearing in the special fiber is {\it optimal} in the following sense: if one of the maximal $(\bbf^\flat,\bbf^\flat)$-Schubert varieties inside $\calA(G,\{\mu\})$ is not normal, then the whole local model $M_{(G, \{\mu\}, \calG_{\bbf})}$ is not normal.  
\end{rmk}

The proof of Corollary \ref{Invariance_Cor} is given in \S\ref{Invariance_Cor_Sec} below.
This relates to the modified local models $\mathbb M^{\rm loc}_{\calG}(G, \{\mu\})$ of \cite[\S2.6]{HPR}. For an appropriate choice of $z$-extension $(\tilde{G}, \tilde{\mu}) \to (G_\ad,\mu_\ad)$ with $\tilde{G}_{\rm der} = G_\scon$ and parahoric group $\tilde{\calG}$ corresponding to $\calG$, \cite[$\S2.6$]{HPR} defines 
$$
\mathbb M^{\rm loc}_{\calG}(G,\{\mu\}) \, \overset{\rm def}{=} \,  M_{(\tilde{G}, \tilde{\mu}, \tilde{\calG})} \otimes_{\calO_{E_\ad}} \calO_E.
$$
The {\em raison d'\^{e}tre} for ${\mathbb M}^{\rm loc}_{\calG}(G, \{\mu\})$ is that it should possess the geometric properties which can fail to hold for $M_{(G, \{\mu\}, \calG)}$ when $p \mid |\pi_1(G_{\rm der})|$. This is indeed the case.

\begin{cor} \label{modified_Cor}
The modified local model ${\mathbb M}^{\rm loc}_{\calG}(G, \{\mu\})$ is normal with reduced special fiber, and is also Cohen-Macaulay if $p> 2$.
\end{cor}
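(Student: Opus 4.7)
The plan is to apply Theorem \ref{special_fiber_thm} to the $z$-extension triple $(\tilde{G}, \tilde{\mu}, \tilde{\calG})$ and then transfer the conclusions through the finite flat base change $\calO_{E_\ad} \hookto \calO_E$.

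First we verify the hypotheses of Theorem \ref{special_fiber_thm} for $(\tilde{G}, \tilde{\mu}, \tilde{\calG})$. Since $\tilde{G}_\der = G_\scon$ is simply connected, $\pi_1(\tilde{G}_\der)$ is trivial, and in particular $p \nmid |\pi_1(\tilde{G}_\der)|$. By Remark \ref{intro_rmk} ii) (first bullet) the Schubert variety $\Gr^{\leq \{\tilde{\mu}\}}_{\tilde{G}, \bar{F}}$ and all maximal Schubert varieties inside $\calA(\tilde{G}, \{\tilde{\mu}\})$ are normal. Theorem \ref{special_fiber_thm} then gives that $M := M_{(\tilde{G}, \tilde{\mu}, \tilde{\calG})}$ is a flat projective $\calO_{E_\ad}$-scheme which is normal with geometrically reduced special fiber, and is Cohen-Macaulay if $p > 2$.

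Next we transfer these properties through $M' := M \otimes_{\calO_{E_\ad}} \calO_E$. The extension $\calO_{E_\ad} \hookto \calO_E$ is finite and flat, and the residue extension $k_{E_\ad} \hookto k_E$ is separable since both are finite fields. Reducedness of the special fiber of $M'/\calO_E$ is immediate, because $M' \otimes_{\calO_E} k_E$ equals $\bar{M} \otimes_{k_{E_\ad}} k_E$ where $\bar{M} := M \otimes_{\calO_{E_\ad}} k_{E_\ad}$ is geometrically reduced. For Cohen-Macaulayness when $p>2$, we apply the fiberwise criterion: $M'$ is flat over the Cohen-Macaulay base $\calO_E$ with fibers obtained from the Cohen-Macaulay fibers of $M/\calO_{E_\ad}$ by separable field extensions, hence themselves Cohen-Macaulay; therefore $M'$ is Cohen-Macaulay.

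Normality of $M'$ is the main subtlety, which we check via Serre's criterion $(R_1)+(S_2)$. Condition $(S_2)$ transfers by flat base change, since the fibers of $\Spec\,\calO_E \to \Spec\,\calO_{E_\ad}$ are zero-dimensional Artinian schemes and hence trivially $(S_2)$. For $(R_1)$, codimension-one points of $M'$ on the generic fiber are regular because $M'_E = M_{E_\ad} \otimes_{E_\ad} E$ is the base change of the normal variety $M_{E_\ad}$ by the finite separable extension $E/E_\ad$, which preserves normality. The remaining codimension-one points are generic points $\xi'$ of irreducible components of the special fiber of $M'$; such a $\xi'$ lies over a generic point $\xi$ of an irreducible component of $\bar{M}$, at which $\calO_{M, \xi}$ is a DVR because $M$ is normal, flat over $\calO_{E_\ad}$, with reduced special fiber. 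Writing $E/E_\ad$ as an unramified extension followed by a totally ramified (Eisenstein) step, the tensor product $\calO_{M, \xi} \otimes_{\calO_{E_\ad}} \calO_E$ is a finite flat extension of $\calO_{M, \xi}$ of the same type, and is therefore a finite product of DVRs; consequently each local ring $\calO_{M', \xi'}$ is a DVR and in particular regular. This gives $(R_1)$, and combined with $(S_2)$ yields normality of $M'$. The main obstacle is precisely this last local analysis, verifying that ramification in $\calO_E/\calO_{E_\ad}$ does not destroy regularity at the codimension-one points of $M'$ lying on the special fiber.
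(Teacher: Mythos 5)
Your proof is correct and follows the same route as the paper: apply Theorem \ref{special_fiber_thm} to the $z$-extension triple $(\tilde{G},\tilde{\mu},\tilde{\calG})$, using $|\pi_1(\tilde{G}_\der)|=1$ together with Remark \ref{intro_rmk}~ii) to verify the normality hypothesis on Schubert varieties, and then transfer the properties along the finite flat base change $\calO_{E_\ad}\to\calO_E$. The only difference is that your explicit Serre-criterion verification of normality after base change --- in particular the $(R_1)$ analysis at codimension-one points of the special fiber via the Eisenstein/DVR computation --- is a self-contained re-derivation of Lemma~\ref{elementary_lem}~i) (that is, \cite[Prop.~9.2]{PZ13}); citing that lemma directly, once one knows the special fiber of the base change is reduced and its characteristic-zero generic fiber is normal, yields normality of $\mathbb M^{\rm loc}_{\calG}(G,\{\mu\})$ immediately and would shorten your argument considerably.
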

As $|\pi_1(\tilde{G}_{\rm der})| = 1$, Corollary \ref{modified_Cor} is immediate from Theorem \ref{special_fiber_thm} and Remark \ref{intro_rmk}. Only the Cohen-Macaulayness is new: the other statements were observed in \cite[Rmk.\,2.9]{HPR}. 
Corollaries \ref{Invariance_Cor} and \ref{modified_Cor} imply that the modified local model $\mathbb M^{\rm loc}_{\calG}(G, \{\mu\})$ always identifies with the normalization of $M_{(G, \{\mu\}, \calG)}$.
The Kisin-Pappas integral models $\mathcal S_K({\bf G}, {\bf X})$ of Shimura varieties in \cite{KP18} can be defined when $p > 2$ (even when $p \mid |\pi_1(G_{\rm der})|$), and are always ``locally modeled'' by associated modified local models $\mathbb M^{\rm loc}_{\calG}(G, \{\mu\})$ (cf.\,\cite[Thm.\,3.1]{HPR}).  
Therefore, Corollary \ref{modified_Cor} implies that all Kisin-Pappas integral models are Cohen-Macaulay.

\medskip
\noindent\textbf{Acknowledgements.} The starting point of this note was a question of Jo\~ao Louren\c{c}o, and we thank him for his interest in our work. Further, we thank Michael Rapoport and the University of Maryland for financial and logistical support. Also we heartily thank Karl Schwede for explaining \cite[Ex.~5.4]{BS13} to us, and Ulrich G\"ortz for interesting email exchanges.

\section{Preliminaries on Schubert varieties}\label{prelim_Schubert}
We introduce temporary notation for use within \S\ref{prelim_Schubert}.  Let $k$ be an algebraically closed field, and let $F=k\rpot{t}$ denote the Laurent series field. Let $G$ be a connected reductive $F$-group. Let $\bbf, \bbf'\subset \scrB(G,F)$ be facets of the Bruhat-Tits building. 
Let $\calG_\bbf$ (resp. $\calG_{\bbf'}$) be the associated parahoric $\calO_F$-group scheme. The loop group $LG$ (resp.~$L^+\calG_\bbf$) is the functor on the category of $k$-algebras $R$ defined by $LG(R)=G(R\rpot{t})$ (resp.~$L^+\calG_\bbf(R)=\calG_\bbf(R\pot{t})$). Then $L^+\calG_\bbf\subset LG$ is a subgroup functor, and the {\it twisted affine flag variety} is the \'etale quotient
\[
\Fl_{G, \bbf}\defined LG/L^+\calG_\bbf,
\]
which is representable by an ind-projective $k$-ind-scheme. Associated with an element $w\in LG(k)$, we define the $(\bbf',\bbf)$-Schubert variety $S_w=S_w(\bbf',\bbf)\subset \Fl_{G, \bbf}$ as the reduced $L^+\calG_{\bbf'}$-orbit closure of $w\cdot e$ where $e\in \Fl_{G, \bbf}$ denotes the base point.

Let $G_\scon\to G_\der\subset G$ denote the simply connected cover of the derived group. Then $G_\scon\simeq \prod_{j\in J}\Res_{F_j/F}(G_j)$ where $J$ is a finite index set, each $F_j/F$ is a finite separable field extension, and each $G_j$ is an absolutely almost simple, simply connected, reductive $F_j$-group. Under the induced map on buildings $\scrB(G_\scon,F)\to \scrB(G,F)$ the facets correspond bijectively to each other, and we denote by $\bbf_j\subset \scrB(G_j,F_j)=\scrB(\Res_{F_j/F}(G_j),F)$ (cf.~\cite[Prop.~4.6]{HRb}) the factor corresponding to $j\in J$ of the facet $\bbf\subset \scrB(G,F)$. 

\begin{prop} \label{normal_prop}
Let $S_w=S_w(\bbf',\bbf)$ be any Schubert variety. 
\begin{enumerate}
\item[i)] The normalization $\tilde{S}_w\to S_w$ is a finite birational universal homeomorphism. Further, $\tilde{S}_w$ is also Cohen-Macaulay, with only rational singularities, and Frobenius split if $\on{char}(k)>0$.
\item[ii)] For each $j\in J$, there exists an alcove $\bba_j\subset \scrB(G_j,F_j)$ containing $\bbf_j$ in its closure and an element $w_j\in LG_j(k)$ together with an isomorphism of $k$-schemes
\[
\tilde{S}_w\;\simeq\; \prod_{j\in J}\tilde{S}_{w_j},
\]  
where  each $\tilde S_{w_j}$ is the normalization of a $(\bba_j,\bbf_j)$-Schubert variety $S_{w_j}=S_{w_j}(\bba_j,\bbf_j)\subset \Fl_{G_j,\bbf_j}$. Further, if $\tilde S_w=S_w$ is normal, then each $\tilde S_{w_j}=S_{w_j}$ is normal as well.
\end{enumerate}
\end{prop}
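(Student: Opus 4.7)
The plan is to prove (ii) first, thereby reducing the geometric properties of $\tilde S_w$ needed in (i) to the case of Iwahori-Schubert varieties in absolutely almost simple simply connected groups, for which classical methods apply.

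For (ii), I would proceed in three stages. First, use the central isogeny $G_\scon \to G_\der \hookrightarrow G$: it induces a morphism $\Fl_{G_\scon, \bbf_\scon} \to \Fl_{G, \bbf}$ whose image is the neutral connected component and which is an isomorphism onto it in the Pappas-Rapoport framework. Since $S_w$ lies in a single connected component, left translation by a representative of its class in $\pi_0$ puts it in the neutral component, whereupon it identifies with a Schubert variety for $G_\scon$. Second, use the isomorphism $G_\scon \simeq \prod_{j} \Res_{F_j/F}(G_j)$ together with \cite[Prop.~4.6]{HRb}: loop groups, parahoric group schemes, and affine flag varieties decompose compatibly, producing
\[
\Fl_{G_\scon, \bbf_\scon} \simeq \prod_{j\in J} \Fl_{G_j, \bbf_j}, \qquad L^+\calG_{\bbf'_\scon} \simeq \prod_{j\in J} L^+\calG_{\bbf'_j},
\]
so that the Schubert variety decomposes as $\prod_j S_{w'_j}(\bbf'_j, \bbf_j)$ for suitable $w'_j$. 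Since normalization commutes with finite products of reduced projective varieties over algebraically closed $k$, this gives $\tilde S_w \simeq \prod_j \tilde S_{w'_j}(\bbf'_j, \bbf_j)$. Third, for each $j$ choose an alcove $\bba_j$ with $\bbf_j\subset \bar\bba_j$; after replacing $w'_j$ by an appropriate affine Weyl translate one may arrange $L^+\calG_{\bba_j} \subset L^+\calG_{\bbf'_j}$. Then any $L^+\calG_{\bbf'_j}$-orbit in $\Fl_{G_j, \bbf_j}$ is a finite union of $L^+\calG_{\bba_j}$-orbits indexed by a double coset $W_{\bbf'_j}\backslash W/W_{\bbf_j}$, and its closure equals the Iwahori orbit closure of the unique Bruhat-maximal element $w_j$. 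Thus $S_{w'_j}(\bbf'_j, \bbf_j) = S_{w_j}(\bba_j, \bbf_j)$ as reduced subschemes, proving the displayed isomorphism. The last clause of (ii) follows from the fact that normality passes between a finite product of reduced varieties over algebraically closed $k$ and its factors, by Serre's $R_1+S_2$ criterion applied to the flat projections.

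For (i), the normalization $\tilde S_w \to S_w$ is automatically finite and birational, and the universal homeomorphism property reduces to $S_w$ being geometrically unibranch, which follows from the existence of a Bott-Samelson-Demazure resolution $D \to S_w$ with $D$ smooth and connected fibers. The remaining properties of $\tilde S_w$ (Cohen-Macaulay, rational singularities, Frobenius split in positive characteristic) reduce via (ii) factorwise to the case of $(\bba_j, \bbf_j)$-Schubert varieties for $G_j$ absolutely almost simple simply connected, where one applies the Bott-Samelson-Demazure resolution together with the Frobenius splitting methods of Mathieu-Littelmann, adapted to the twisted affine setting by Faltings and Pappas-Rapoport (cf.~\cite[Thm.~8.4]{PR08}), to obtain all the desired properties.

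The main obstacle is the third stage of (ii): arranging $L^+\calG_{\bba_j} \subset L^+\calG_{\bbf'_j}$ by a Weyl translate of $\bbf'_j$ without disturbing $\bbf_j$ requires delicate use of the freedom to modify $w'_j$ on the right by $W_{\bbf_j}$, together with the transitivity of the affine Weyl group on alcoves. A secondary subtlety lies in the first stage, where one must verify that the morphism $\Fl_{G_\scon, \bbf_\scon} \to \Fl_{G, \bbf}$ induced by the isogeny transfers Schubert orbit closures correctly; this uses that the central kernel acts trivially on the affine Grassmannian up to connected-component translations.
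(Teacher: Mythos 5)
Your overall plan has a genuine gap at the very first stage of (ii), and this gap is not a presentation issue --- it is precisely the technical obstacle that the paper's argument is designed to overcome.

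You assert that the isogeny $G_\scon \to G$ induces a map $\Fl_{G_\scon, \bbf} \to \Fl_{G, \bbf}$ which ``is an isomorphism onto'' the neutral component, so that $S_w$ ``identifies with a Schubert variety for $G_\scon$.'' This is false in positive characteristic when $p$ divides $|\pi_1(G_\der)|$: the kernel of $G_\scon \to G_\der$ is then an infinitesimal group scheme, and the induced map on (neutral components of) twisted affine flag varieties is in general only a \emph{finite birational universal homeomorphism}, not an isomorphism. The paper's Remark \ref{intro_rmk} makes the consequences concrete: there exist non-normal Schubert varieties in $\Fl_{\PGL_2}$ over $\bar{\mathbb{F}}_2$, whereas all Schubert varieties for $\SL_2$ are normal --- so the map $\Fl_{\SL_2} \to \Fl_{\PGL_2}^o$ cannot be an isomorphism. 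What is true, and what the paper proves as Proposition \ref{Schubertadprop}, is that the map $S_{w'} \to S_w$ is a finite birational universal homeomorphism inducing an isomorphism on normalizations $\tilde{S}_{w'} \simeq \tilde{S}_w$; establishing this requires the careful orbit-by-orbit comparison in Lemmas \ref{Setad}--\ref{openSchubertad}, the latter using that $\calG' \to \calG$ is the identity on the $\calO_F$-lifts of root subgroups even though the maps on tori differ.

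This false premise also makes your reversal of the logical order problematic: your plan is to prove (ii) first and then deduce (i), but in the paper (ii) depends on (i). Proposition \ref{Schubertadprop}, the correct replacement for your first stage, invokes Proposition \ref{normal_prop} i) (the vertical normalization maps in its proof diagram are finite birational universal homeomorphisms \emph{because} of i)). If you try to fix the gap by importing Proposition \ref{Schubertadprop}, you reintroduce a dependence on (i) and your intended reduction becomes circular. The paper avoids this by first proving (i) directly, by a different device: one reduces to $\bbf' = \bba$ and $w$ right $\bbf$-minimal, then exploits the smooth étale-locally-trivial fibration $\pi\co \Fl_{G,\bba} \to \Fl_{G,\bbf}$ satisfying $\pi^{-1}(S_w(\bba,\bbf)) = S_{w_0}(\bba,\bba)$, so that the geometric properties descend (via fpqc-local and smooth-local characterizations) from the Iwahori--Iwahori case handled in \cite[Prop.\,9.7]{PR08}. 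Your proposed Demazure-resolution argument for (i) is a plausible alternative route once the reduction to the Iwahori case is in place, and your product decomposition of $\Fl_{G_\scon}$ (the paper's Lemma \ref{simply_connected_variety}) and alcove-reduction stage are essentially correct; but without a correct substitute for the first stage --- a substitute that does not presuppose (i) --- the argument as written does not close.
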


We first reduce the proof of Proposition \ref{normal_prop} to the case where $\bbf'=\bba$ is an alcove, and where $S_w=S_w(\bba,\bbf)$ is contained in the neutral component $\Fl_{G,\bbf}^o\subset \Fl_{G,\bbf}$, cf.~\S\ref{Alcove_Red}. The proof of part i) is given in \S\ref{proof_i}, and of part ii) in \S\ref{proof_ii} below.

\subsection{Alcove reduction}\label{Alcove_Red} 
By \cite[Thm 7.4.18 (i)]{BT72}, there exists a maximal $F$-split torus $S\subset G$ such that $\bbf,\bbf'$ are both contained in the apartment $\scrA(G,S,F)$. The centralizer $T=C_G(S)$ is a maximal torus (because $G$ is quasi-split over $F$ by Steinberg's theorem), and we denote by $N=N_G(S)$ the normalizer. Let $\tilde{W}=N(F)/\calT(\calO_F)$ be the Iwahori-Weyl group (or extended affine Weyl group) where $\calT$ denotes the neutral component of the lft N\'eron model of $T$. We also denote $W_\bbf=(L^+\calG_{\bbf}(k)\cap N(F))/\calT(\calO_F)$, and likewise $W_{\bbf'}$.

\subsubsection{Step 1} \label{Step 1} As $\tilde{W}$ acts transitively on the set of alcoves in $\scrA(G,S,F)$, there exists an $n\in N(F)$ such that $n\cdot \bbf'$ and $\bbf$ are contained in the closure of the same alcove. Further, left multiplication by $n$ on $\Fl_{G,\bbf}$ induces an isomorphism
\[
S_w(\bbf',\bbf)\,\simeq\, S_{n\cdot w}(n\cdot \bbf',\bbf),
\]
where we use that $S_{n\cdot w}(n\cdot \bbf',\bbf)$ denotes the reduced orbit closure of $n\cdot w\in LG(k)$ under the group $L^+\calG_{n\cdot \bbf'}=nL^+\calG_{ \bbf'}n^{-1}$. Hence, we may assume the facets $\bbf, \bbf'$ are in the closure of a single alcove $\bba\subset \scrA(G,S,F)$.

\subsubsection{Step 2} By \cite[Prop 8]{HR08}, there exists an element $w_0\in \tilde{W}$ such that
\[
L^+\calG_{\bbf'}(k)\cdot w\cdot L^+\calG_\bbf(k) = L^+\calG_{\bbf'}(k)\cdot \dot{w}_0\cdot L^+\calG_{\bbf}(k),
\]
where $\dot{w}_0\in N(F)$ denotes any representative of $w_0$. This implies the equality $S_w(\bbf',\bbf)=S_{\dot{w}_0}(\bbf',\bbf)$ of Schubert varieties. Hence, we reduce to the case where $w=\dot{w}_0\in N(F)$.

\subsubsection{Step 3} Corresponding to the choice of $\bba$, we have the decomposition $\tilde{W}=W_\aff\rtimes \Om$ where $W_\aff=W_\aff(\breve{\Sig})$ is the affine Weyl group for the \'echelonnage root system $\breve{\Sig}$, and $\Om=\Om_\bba$ denotes the stabilizer of $\bba$ in $\tilde{W}$, cf.~\cite[Prop 12 ff.]{HR08}. Thus $\tilde{W}$ has the structure of a quasi-Coxeter group. Let $w_0\in W_{\bbf'}\cdot w\cdot W_{\bbf}$ be the unique left $\bbf'$-maximal element among all right $\bbf$-minimal elements, cf.~\cite[Lem 1.6 ii)]{Ri13}. Then for any representative $\dot{w}_0\in N(F)$ of $w_0$, we have $S_{w}(\bbf',\bbf)=S_{\dot{w}_0}(\bba,\bbf)$, cf.~\cite[Proof of Prop 2.8]{Ri13}. Hence, we reduce further to the case where $\bbf'=\bba$.

\subsubsection{Step 4} The class of $w$ in $\tilde{W}$ can be written uniquely in the form $\tau \cdot w_\aff$ with $w_\aff\in W_\aff$, $\tau\in \Om$. Left multiplication by any lift $\dot{\tau}\in N(F)$ induces an isomorphism of Schubert varieties $S_{\dot{w}_\aff}(\bba,\bbf)= S_w(\bba,\bbf)$. Hence, we reduce to the case where $\tau=1$, i.e., $w=w_\aff\in W_\aff$. This means that $S_{w}(\bba,\bbf)$ is contained in the neutral component $\Fl_{G,\bbf}^o\subset \Fl_{G,\bbf}$ (see \cite[$\S5$]{PR08}).

\begin{cor}\label{Reduction_Cor}
If the statement of Proposition \ref{normal_prop} holds for the Schubert varieties $S_w(\bba,\bbf)$ for all maximal split $F$-tori $S\subset G$, one choice of alcove $\bba\subset \scrA(G,S,F)$ containing $\bbf$ in its closure and all $w\in W_{\aff}$, then Proposition \ref{normal_prop} holds for the Schubert varieties $S_w(\bbf',\bbf)$ for all facets $\bbf,\bbf'\subset \scrB(G,F)$ and all $w\in LG(k)$.
\end{cor}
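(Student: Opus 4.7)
The plan is to chain together the four reduction steps of \S\ref{Alcove_Red}, observing that each one is either a literal equality of closed subschemes of $\Fl_{G,\bbf}$ or an isomorphism of $k$-schemes induced by left multiplication by an element of $N(F)$. Since every conclusion of Proposition \ref{normal_prop} --- the normalization map being a finite birational universal homeomorphism, Cohen-Macaulayness of $\tilde{S}_w$, rationality of singularities, Frobenius splitness in positive characteristic, and the product decomposition in Part ii) --- is invariant under isomorphism of $k$-schemes, the hypothesized statement for $(\bba,\bbf)$-Schubert varieties will propagate to arbitrary $S_w(\bbf',\bbf)$.

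Concretely, starting from $\bbf, \bbf'$ and $w \in LG(k)$, I would first apply \cite[Thm.\,7.4.18(i)]{BT72} to choose a maximal $F$-split torus $S$ with $\bbf, \bbf' \subset \scrA(G,S,F)$, and let $\bba \subset \scrA(G,S,F)$ be the alcove fixed by the hypothesis. Step 1 is then sharpened so that the resulting common alcove coincides with this $\bba$: picking any alcove $\bba^*$ with $\bbf' \subset \overline{\bba^*}$, transitivity of $\tilde{W}$ on alcoves in the apartment furnishes $n \in N(F)$ with $n\cdot\bba^* = \bba$, whence $n\cdot\bbf' \subset \overline{\bba}$ and left multiplication by $n$ yields $S_w(\bbf',\bbf) \simeq S_{nw}(n\cdot\bbf',\bbf)$. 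Steps 2 and 3 next collapse the double coset $W_\bba \cdot nw \cdot W_\bbf$ to a distinguished $\tilde{W}$-representative $\dot w_0 \in N(F)$ and replace the source facet $n\cdot\bbf'$ by $\bba$, yielding the equality $S_{nw}(n\cdot\bbf',\bbf) = S_{\dot w_0}(\bba,\bbf)$. Finally, writing $w_0 = \tau w_\aff$ with $\tau \in \Omega_\bba$ and $w_\aff \in W_\aff$, and using that $\dot\tau$ normalizes $L^+\calG_\bba$, Step 4 produces
\[
S_{\dot w_0}(\bba,\bbf) \;\simeq\; S_{\dot w_\aff}(\bba,\bbf).
\]

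The only delicate point is the sharpening of Step 1, and it is resolved by the transitivity argument just noted; thereafter Steps 2--4 preserve the chosen alcove, since $\tau \in \Omega_\bba$ by definition stabilizes $\bba$. Invoking the hypothesis for $S_{\dot w_\aff}(\bba,\bbf)$ and pulling the conclusions of Proposition \ref{normal_prop} back through the above chain of isomorphisms then completes the reduction. The product decomposition in Part ii) transports without extra work because normalization commutes with pullback along isomorphisms of $k$-schemes, so the factors $\tilde{S}_{w_j}$ supplied by the hypothesis for the terminal Schubert variety serve equally well as factors for the original $\tilde{S}_w$.
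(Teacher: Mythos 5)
Your proposal is correct and follows essentially the same route as the paper, namely chaining together Steps 1--4 of \S\ref{Alcove_Red}; your sharpening of Step~1 to land directly on the prescribed alcove $\bba$ (using $\tilde W$-transitivity) is just a repackaging of the paper's closing remark that $W_\aff$ acts simply transitively on alcoves in the apartment. One small notational slip: after Step~1 the relevant double coset in Step~2 is $W_{n\cdot\bbf'}\cdot nw\cdot W_\bbf$, not $W_\bba\cdot nw\cdot W_\bbf$ (the latter is just a right coset since $W_\bba$ is trivial), but this does not affect the argument.
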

\begin{proof}
The corollary is immediate from Steps 1-4 above. Note that it is enough to show Proposition \ref{normal_prop} ii) for a single alcove $\bbf' = \bba\subset \scrA(G,S,F)$ because $W_\aff$ acts (simply) transitively on these alcoves.
\end{proof}

\subsection{A lemma on orbits}\label{orbit_lem} The following lemma helps to control orbits in partial affine flag varieties. 

\begin{lem}\label{right_min_lem} 
Let $w\in \tilde{W}$ be right $\bbf$-minimal. Then the natural map
\begin{equation}\label{Iso_Cells}
L^+\calG_\bba\cdot w\cdot L^+\calG_\bba/L^+\calG_\bba\,\longto\, L^+\calG_\bba \cdot w\cdot L^+\calG_\bbf/L^+\calG_\bbf
\end{equation}
is an isomorphism.
\end{lem}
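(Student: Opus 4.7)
The plan is to realize both sides of~\eqref{Iso_Cells} as $L^+\calG_\bba$-orbits, reduce the isomorphy to equality of stabilizers, and then settle that equality by a direct affine-root computation encoding right $\bbf$-minimality of $w$.

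First I would note that the natural map is $L^+\calG_\bba$-equivariant and surjective: any element $gwh\cdot L^+\calG_\bbf$ with $g \in L^+\calG_\bba$ and $h \in L^+\calG_\bbf$ equals $gw\cdot L^+\calG_\bbf$, the image of $gw\cdot L^+\calG_\bba$. Hence both sides are homogeneous $L^+\calG_\bba$-spaces, identifying with $L^+\calG_\bba/H_\bba$ and $L^+\calG_\bba/H_\bbf$, where
\[
H_\bba \;\defined\; L^+\calG_\bba \cap w L^+\calG_\bba w^{-1}, \qquad H_\bbf \;\defined\; L^+\calG_\bba \cap w L^+\calG_\bbf w^{-1},
\]
and the map in~\eqref{Iso_Cells} becomes the canonical quotient $L^+\calG_\bba/H_\bba \to L^+\calG_\bba/H_\bbf$. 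Since $L^+\calG_\bba \subset L^+\calG_\bbf$, the inclusion $H_\bba \subset H_\bbf$ is automatic, so only the reverse inclusion needs to be established.

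For this I would unpack both stabilizers in terms of affine root groups. Pick a maximal $F$-split torus $S \subset G$ whose apartment $\scrA(G,S,F)$ contains $\bba$, let $\Phi_\aff$ denote the associated affine root system with positive system $\Phi_\aff^+$ determined by $\bba$, and let $\Phi_\bbf \subset \Phi_\aff$ be the finite subsystem whose reflections generate $W_\bbf$, with $\Phi_\bbf^\pm := \Phi_\bbf \cap \Phi_\aff^\pm$. Then $L^+\calG_\bba$ is generated by $L^+\calT$ and the affine root groups $\{U_\alpha\}_{\alpha \in \Phi_\aff^+}$, while $L^+\calG_\bbf$ is generated by $L^+\calG_\bba$ together with $\{U_\beta\}_{\beta \in \Phi_\bbf^-}$. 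Since conjugation by $w$ sends $U_\alpha$ to $U_{w\alpha}$, the groups $H_\bba$ and $H_\bbf$ are generated by $L^+\calT$ together with the $U_\alpha$ for $\alpha \in \Phi_\aff^+ \cap w(\Phi_\aff^+)$, respectively $\alpha \in \Phi_\aff^+ \cap w(\Phi_\aff^+ \cup \Phi_\bbf^-)$. The desired equality $H_\bba = H_\bbf$ therefore reduces to the combinatorial identity $\Phi_\aff^+ \cap w(\Phi_\bbf^-) = \emptyset$, equivalently $w(\Phi_\bbf^+) \subset \Phi_\aff^+$.

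This last inclusion is precisely what right $\bbf$-minimality of $w$ encodes: by definition $\ell(ws) > \ell(w)$ for every simple reflection $s = s_{\alpha_s}$ with $\alpha_s \in \Phi_\bbf^+$ simple, i.e.\ $w(\alpha_s) > 0$ for each such $\alpha_s$. The standard induction on length in the Coxeter group $W_\bbf$ — using that each simple $s \in W_\bbf$ permutes $\Phi_\bbf^+ \setminus \{\alpha_s\}$ and flips the sign of $\alpha_s$ — then promotes positivity on simple roots to positivity on all of $\Phi_\bbf^+$, closing the argument. The main technical delicacy is the scheme-theoretic identification of $H_\bba$ and $H_\bbf$ as subgroups generated by $L^+\calT$ and the listed affine root groups, which relies on the Iwahori-type decomposition of $L^+\calG_\bbf$ and its compatibility under conjugation by lifts of $\tilde W$-elements; this is standard Bruhat–Tits structural input of the same kind already invoked in \S\ref{Alcove_Red}, and granting it the remainder is formal.
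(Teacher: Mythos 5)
Your strategy — realize both sides as homogeneous $L^+\calG_\bba$-spaces and reduce the isomorphy to the scheme-theoretic equality of the two stabilizers $H_\bba = L^+\calG_\bba \cap wL^+\calG_\bba w^{-1}$ and $H_\bbf = L^+\calG_\bba \cap wL^+\calG_\bbf w^{-1}$ — is a reasonable plan, and your affine-root combinatorics (that right $\bbf$-minimality gives $w(\Phi_\bbf^+)\subset\Phi_\aff^+$, hence $\Phi_\aff^+\cap w(\Phi_\bbf^-)=\emptyset$) is correct and standard Coxeter theory.

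The genuine gap is the step you flag as a ``technical delicacy'' and then wave away: the scheme-theoretic identification of $H_\bba$ and $H_\bbf$ as the subgroup schemes generated by $L^+\calT$ and precisely the listed affine root groups, with no further elements. This is \emph{not} of the same kind as the input used in \S\ref{Alcove_Red}, which is purely combinatorial (facts about Iwahori--Weyl groups and double cosets). What you need is a schematic triangular/Iwahori factorization for the pro-algebraic groups $L^+\calG_\bba$ and $L^+\calG_\bbf$, compatible with conjugation by $w$, strong enough to pin down an intersection like $L^+\calG_\bba\cap wL^+\calG_\bbf w^{-1}$ as a product of root group schemes. The paper explicitly avoids this route and tells you why: the split case of this lemma was proved in \cite[Lem.\,4.2]{HRa} exactly by this kind of structural decomposition (via negative loop groups), but ``in the general case the required properties of negative loop groups are not yet available.'' In the twisted, quasi-split, pro-algebraic setting the needed big-cell/factorization theory has not been established, so you cannot simply cite it. Until that structural input is supplied, $H_\bbf\subset H_\bba$ at the level of subgroup functors (not just $k$-points) is unproven, and the lemma does not follow.

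For contrast, the paper's argument sidesteps all of this: it first checks that the map is bijective on $k$-points (using the decomposition $P/B=\coprod_{w'\in W_\bbf}Bw'B/B$ plus $l(ww')=l(w)+l(w')$), then shows the map is smooth by exhibiting it as a base change along a surjective smooth map — here the key device is the twisted product $BwB\times^B Bw_0B/B$ and Lemma \ref{pr_1_lem}, a soft sheaf-theoretic statement — and finally invokes Zariski's Main Theorem. That route trades the delicate scheme-theoretic description of stabilizers for robust, general facts (\'etale descent, properties of twisted products), which is precisely why it works uniformly in the non-split case.
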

\begin{proof} In the split case, this was proved in \cite[Lem.\,4.2]{HRa} using negative loop groups. In the general case the required properties of negative loop groups are not yet available, so we give a different approach. Denote $B=L^+\calG_\bba$ and $P=L^+\calG_\bbf$. Recall that $(LG/P)(k) = LG(k)/P(k)$, cf.\,e.g.\,\cite[Thm.\,1.4]{PR08}. The map $\iota\co BwB/B\to BwP/P$ is clearly surjective on $k$-points. To see that it is injective on $k$-points one uses the decomposition $P/B=\coprod_{w'\in W_\bbf}Bw'B/B$ (cf.~\cite[Rem 2.9]{Ri13}) together with the fact $l(w\cdot w')=l(w)+l(w')$ for all $w'\in W_\bbf$ which holds by the right $\bbf$-minimality. Hence, the map is bijective on $k$-points, and, using Zariski's Main Theorem, one can show it is enough to prove that $\iota$ is smooth. Let $w_0\in W_\bbf$ be the longest element. Consider the commutative diagram of $k$-schemes
\[
\begin{tikzpicture}[baseline=(current  bounding  box.center)]
\matrix(a)[matrix of math nodes, 
row sep=1.5em, column sep=2em, 
text height=1.5ex, text depth=0.45ex] 
{BwB\times^BBw_0B/B & BwP/B  \\ 
BwB/B& BwP/P. \\}; 
\path[->](a-1-1) edge node[above] {$\pi$}  (a-1-2);
\path[->](a-1-1) edge node[left] {$\on{pr}_1$}  (a-2-1);
\path[->](a-2-1) edge node[above] {$\iota$}  (a-2-2);
\path[->](a-1-2) edge node[right] {$\on{pr}$}  (a-2-2);
\end{tikzpicture}
\]
The inclusion $Bw_0B/B\subset P/B$ is an open immersion. As $w$ is assumed to be right $\bbf$-minimal, we have $l(w\cdot w_0)=l(w)+l(w_0)$, and standard properties of partial Demazure resolutions show that $\pi$ is an open immersion. We claim that both projections $\on{pr}_1$ and $\on{pr}$ are smooth and surjective; in that case  \cite[02K5]{StaPro} shows that the morphism $BwB/B\to BwP/P$ is smooth as well.  The assertion for both $\on{pr}$ and $\on{pr}_1$ results from the following general lemma applied with $\calP=\text{``smooth''}$ (using \'etale descent \cite[02VL]{StaPro}). We note that working with \'etale sheaves is justified by \cite[Thm.~1.4]{PR08} (cf.~also \cite[Prop.~A.4.9]{RS19} and \cite[pf.\,of Lem.\,4.9]{HRb}).
\end{proof}

\begin{lem} \label{pr_1_lem}
Let $S$ be a scheme. Let $X, Y$ be \'etale sheaves on ${\rm AffSch}/S$ \textup{(}affine schemes equipped with a map to $S$\textup{)}, and let $G$ be an \'etale sheaf of groups over $S$. Suppose $X$ \textup{(}resp.\,$Y$\textup{)} carries a right \textup{(}resp.\,left\textup{)} $G$-action such that $G$ acts freely on $X$. Let $\calP$ be a property of maps of algebraic spaces which is stable under base change, and which is \'etale local on the target. If $Y\to S$ is an algebraic space \textup{(}resp.~and has property $\calP$\textup{)}, then the canonical projection $\on{pr}_1\co X \times^G Y \to X/G$ is representable \textup{(}resp.~and has property $\calP$\textup{)}.
\end{lem}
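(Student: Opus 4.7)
The plan is to show, via étale descent, that $\on{pr}_1$ is étale-locally on $X/G$ isomorphic to a base change of $Y\to S$. Representability of morphisms of étale sheaves and property $\calP$ are both étale-local on the target and stable under arbitrary base change, so once such a local model is established the conclusion is immediate.

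First I would check that $X\to X/G$ is a $G$-torsor in the étale topology. Freeness of the action means that the shear map $G\times X\to X\times_{X/G}X$, $(g,x)\mapsto(gx,x)$, is an isomorphism of étale sheaves; in the intended applications smoothness of $G=L^+\calG_\bba$ ensures this torsor trivializes étale-locally on $X/G$. Consequently, any test map $T\to X/G$ from an affine scheme admits, after passing to a suitable étale cover $T'\to T$, a lift $T'\to X$.

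Next I would compute the fibre product of $\on{pr}_1$ along such a lift. Over $X\to X/G$ the natural morphism
\[
X\times_S Y\;\longto\;(X\times^G Y)\times_{X/G}X,\qquad (x,y)\,\longmapsto\,\bigl([x,y],\,x\bigr),
\]
is an isomorphism: its inverse sends $([x_0,y_0],x)$ to $(x,\,g^{-1}y_0)$ for the unique $g\in G$ with $gx=x_0$, which is well-defined precisely because $G$ acts freely on $X$. Pulling back further along $T'\to X$ yields
\[
(X\times^G Y)\times_{X/G}T'\;\simeq\;Y\times_S T',
\]
under which $\on{pr}_1$ becomes the base change of $Y\to S$ along $T'\to S$. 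Since $Y$ is an algebraic space (resp.\ has property $\calP$), this pullback is a morphism of algebraic spaces (resp.\ has property $\calP$), and étale descent finishes the argument.

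The main technical point will be the étale-local triviality of the torsor $X\to X/G$ in the ind-scheme setting where the lemma is applied. As in the proof of Lemma~\ref{right_min_lem}, this is guaranteed by \cite[Thm.~1.4]{PR08} together with \cite[Prop.~A.4.9]{RS19}, so in that application there is nothing genuinely new to verify; the content of the lemma is the computation of the fibre product identifying $\on{pr}_1$ locally with the base change of $Y\to S$.
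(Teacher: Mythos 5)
Your proof is correct and takes essentially the same route as the paper: the whole content is the Cartesian square
\[
\begin{tikzpicture}[baseline=(current  bounding  box.center)]
\matrix(a)[matrix of math nodes,
row sep=1.5em, column sep=2em,
text height=1.5ex, text depth=0.45ex]
{X\times Y & X\times^GY  \\
X& X/G, \\};
\path[->](a-1-1) edge node[above] {}  (a-1-2);
\path[->](a-1-1) edge node[left] {}  (a-2-1);
\path[->](a-2-1) edge node[above] {}  (a-2-2);
\path[->](a-1-2) edge node[right] {$\on{pr}_1$}  (a-2-2);
\end{tikzpicture}
\]
established via the explicit isomorphism $X\times Y\simeq (X\times^G Y)\times_{X/G}X$ (using freeness), followed by descent along the epimorphism $X\to X/G$. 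The only real difference is bookkeeping: the paper packages the descent step as a citation to \cite[Cor.\,(1.6.3)]{LMB00}, whereas you unwind it by lifting test maps $T\to X/G$ along an \'etale cover. One small point worth correcting: you attribute the \'etale-local triviality of $X\to X/G$ to smoothness of $G$ ``in the intended applications.'' At the level of this abstract sheaf-theoretic lemma no smoothness is needed --- $X/G$ is by definition the quotient \emph{sheaf} in the \'etale topology, so $X\to X/G$ is automatically an epimorphism of \'etale sheaves, which is exactly the statement that any $T\to X/G$ lifts to $X$ after passing to an \'etale cover of $T$; together with freeness this already makes $X\to X/G$ a $G$-torsor. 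Smoothness of $L^+\calG_\bba$ and the result \cite[Thm.~1.4]{PR08} enter elsewhere in the proof of Lemma \ref{right_min_lem} (to know that the relevant orbits and quotients are actual schemes, and that the flag variety represents the fppf/\'etale quotient sheaf), not to make the present abstract lemma work.
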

\begin{proof}

We claim that there is a Cartesian diagram of \'etale sheaves
\[
\begin{tikzpicture}[baseline=(current  bounding  box.center)]
\matrix(a)[matrix of math nodes, 
row sep=1.5em, column sep=2em, 
text height=1.5ex, text depth=0.45ex] 
{X\times Y & X\times^GY  \\ 
X& X/G, \\}; 
\path[->](a-1-1) edge node[above] {}  (a-1-2);
\path[->](a-1-1) edge node[left] {}  (a-2-1);
\path[->](a-2-1) edge node[above] {}  (a-2-2);
\path[->](a-1-2) edge node[right] {$\on{pr}_1$}  (a-2-2);
\end{tikzpicture}
\]
where $X\to X/G$ and $X\times Y\to X$ are the canonical projections. The claim implies the representability of $\on{pr}_1$ by \cite[Cor.\,(1.6.3)]{LMB00} applied to the epimorphism $X\to X/G$ using that it is obvious for $X\times Y\to X$ (cf.~\cite[Rem.\,(1.5.1)]{LMB00}). Also this shows that if $Y\to S$ has property $\calP$, then $\on{pr}_1$ has property $\calP$. To prove the claim we note that the map
\[
X\times Y\longto X\times_{X/G,\on{pr}_1}(X\times^GY), \;\; (x,y)\mapsto (x,[x,y])
\]
is an isomorphism of \'etale sheaves. This is elementary to check using the freeness of the $G$-action on $X$.
\end{proof} 

\subsection{Proof of Proposition \ref{normal_prop} i)} \label{proof_i}

Consider a general Schubert variety $S_w(\bbf',\bbf)$ for some $\bbf',\bbf\subset \scrB(G,F)$, $w\in LG(k)$. By Corollary \ref{Reduction_Cor}, there exist a maximal $F$-split torus $S\subset G$, an alcove $\bbf\subset \bar{\bba}\subset \scrA(G,S,F)$, and an element $w_\aff\in W_\aff$ such that $S_w(\bbf',\bbf) \simeq S_{w_\aff}(\bba,\bbf)$ as $k$-schemes. Thus, we may and do assume that $\bbf'=\bba$ and that $w=w_\aff\in W_\aff$ is right $\bbf$-minimal by Lemma \ref{right_min_lem}. Further, if $\bbf=\bba$, then Proposition \ref{normal_prop} i) is proven in \cite[Prop.~9.7]{PR08}, and we explain how to extend the arguments to the general case. Let $\pi\co \Fl_{G,\bba}\to \Fl_{G,\bbf}$ be the canonical projection. Denote by $w_0\in w\cdot W_\bbf$ the maximal length representative inside the coset. Then $\pi^{-1}(S_{w}(\bba,\bbf))=S_{w_0}(\bba,\bba)$. Consider the normalization $\tilde{S}_w:=\tilde{S}_{w}(\bba,\bbf)\to S_{w}(\bba,\bbf)=:S_w$ (resp.~$\tilde{S}_{w_0}:=\tilde{S}_{w_0}(\bba,\bba)\to S_{w_{0}}(\bba,\bba)=:S_{w_0}$) which sits in a commutative diagram
\[
\begin{tikzpicture}[baseline=(current  bounding  box.center)]
\matrix(a)[matrix of math nodes, 
row sep=1.5em, column sep=2em, 
text height=1.5ex, text depth=0.45ex] 
{\tilde{S}_{w_0}& S_{w_0}  \\ 
\tilde{S}_w& S_w. \\}; 
\path[->](a-1-1) edge node[above] {}  (a-1-2);
\path[->](a-1-1) edge node[left] {$p$}  (a-2-1);
\path[->](a-2-1) edge node[below] {}  (a-2-2);
\path[->](a-1-2) edge node[right] {}  (a-2-2);
\end{tikzpicture}
\]
The right vertical map is an \'etale locally trivial fibration with typical fiber the homogenous space $Y:=L^+\calG_{\bbf}/ L^+\calG_{\bba}$ (cf.\,\cite[Lem.\,4.9]{HRb}), and thus the diagram is Cartesian because the property of being normal is local in the smooth topology \cite[0347]{StaPro}. By \cite[Prop.~9.7 a)]{PR08}, the top horizontal map is a finite birational universal homeomorphism, and so is the bottom horizontal map because all properties are fpqc local on the target \cite[02LA, 02L4, 0CEX]{StaPro}. Further, $\tilde{S}_{w_0}$ is Cohen-Macaulay by \cite[Prop.~9.7 d)]{PR08}, and hence so is $\tilde{S}_{w}$ by smoothness of $p$. For the property ``Frobenius split if $\on{char}(k)>0$'' (resp.\,``has rational singularities''), we note first that the counit of the adjunction
\begin{equation}\label{Ad_Map}
\calO_{\tilde{S}_w}\,\overset{\simeq}{\longto}\, p_*(p^*\calO_{\tilde{S}_w})\,=\,p_*(\calO_{\tilde S_{w_0}})
\end{equation}
is an isomorphism. Indeed, by \'etale descent for coherent sheaves we may argue locally in the \'etale topology. Using flat base change \cite[02KH]{StaPro}, it remains to prove \eqref{Ad_Map} for maps of type $\on{pr}\co Y\times X\to X$ for some $k$-variety $X$. As the push forward of $\calO_{Y}$ along the structure map $Y\to \Spec(k)$ is $\calO_{k}$ (cf.~ \cite[0AY8]{StaPro}), the assertion is immediate by flat base change.  Further, it is clear that $R^qp_*(\calO_{Y}) = 0$ when $q >0$, for example by a very special case of the Borel-Bott-Weil theorem ($Y$ is isomorphic to a classical flag variety over $k$, cf.\,\cite[\S 4.2.2]{HRa}). If $k$ is a field of characteristic $p>0$, then $\tilde S_{w_0}$ is Frobenius split by \cite[Prop.~9.7 c)]{PR08}, and the push forward of a splitting defines a splitting of $\tilde S_w$ by using \eqref{Ad_Map}. 

It remains to construct a rational resolution onto $\tilde S_w$. Recall that by \cite[Prop.~9.7 b)]{PR08} the natural closed immersion $S_w(\bba,\bba)\subset S_{w_0}(\bba,\bba)=S_{w_0}$ lifts to a closed immersion $\tilde S_w^\bba:=\tilde S_w(\bba,\bba)\subset \tilde S_{w_0}$. Let $p_w:= p|_{\tilde S_w^\bba}\co \tilde S_w^\bba\to \tilde S_w(\bba,\bbf)=\tilde S_w$ be the restriction. Fix a reduced decomposition $w=s_1\cdot\ldots\cdot s_n$ into simple reflections. The Demazure resolution $D(w)\to S_w^\bba$ factors through the normalization, and we claim that the composition 
\[
f\co D(w) \overset{\pi_w}{\longto} \tilde S_w^\bba \overset{p_w}{\longto} \tilde S_w
\]
is a rational resolution, i.e., $Rf_*\calO_{D(w)}$ is quasi-isomorphic to $\calO_{\tilde S_w}$ (in which case we call $f$ (cohomologically) trivial), and $R^q\om_{D(w)}=0$ for all $q >0$, where $\om_{D(w)}=\Om_{D(w)/k}^{n}$, $n=l(w)$. It is enough to show that $f$ is trivial. Indeed, if ${\rm char}(k) = 0$, $R^q\om_{D(w)}=0$ for $q>0$ would follow from the Grauert-Riemenschneider vanishing theorem; if ${\rm char}(k) = p>0$, as $D(w)$ is Frobenius split (cf.\,\cite[Prop 3.20]{Go01}) the vanishing would follow from the Grauert-Riemenschneider vanishing for Frobenius split varieties, cf.~\cite[Thm.~1.2]{MvdK92}. Further note that both morphisms $f=p_w\circ \pi_w$ are surjective and birational; for $p_w$ birational, use Lemma \ref{right_min_lem} and recall that $w$ is chosen to be right $\bbf$-minimal. Therefore $f$ is a resolution of singularities. Since $\tilde S_w$ is normal and integral, the Stein factorization of $f$ yields $f_* \calO_{D(w)} = \calO_{\tilde S_w}$. It remains to prove that $R^qf_* (\calO_{D(w)}) = 0$ for $q>0$.

Extend the reduced decomposition of $w$ to a reduced decomposition of $w_0$, and consider the following diagram
$$
\xymatrix{
D(w_0) \ar@/^/[drr]^{{\rm pr}_w} \ar@/_/[ddr]_{\pi_{w_0}} \ar@{-->}[dr]^{h} & & \\
& D^{\Box}(w) \ar[r]^q \ar[d]^g & D(w) \ar[d]^{f} \\
& \tilde S_{w_0} \ar[r]^{p} & \tilde S_w.
}
$$
Here the square is Cartesian, ${\rm pr}_w$ is the natural projection (onto the first $l(w)$ factors), and the dotted arrow $h$ exists because $f \circ {\rm pr}_w =  p \circ \pi_{w_0}$.  Since $\pi_{w_0}$ and $g$ are both birational, so is $h$. We claim that $g$ is trivial. By the Leray spectral sequence it suffices to prove that $\pi_{w_0}$ and $h$ are trivial. The triviality of $\pi_{w_0}$ is proven in \cite[Prop.\,9.7(d)]{PR08}. For $h$, note that $D^{\Box}_w=D_w\tilde{\times} Y$ is the twisted product, and likewise $D({w_0})=D(w)\tilde{\times}D(v)$ for the decomposition $w_0=w\cdot v$ with $v\in W_\bbf$. Under these identifications the map $h$ decomposes as $h=\id\tilde{\times} h_0$ where $\id\co D(w)\to D(w)$ is the identity, and $h_0\co D(v)\to Y$ is the Demazure resolution. Locally in the smooth topology on $D(w)^{\Box}$, the map $h$ is isomorphic to the direct product ${\rm id} \times h_0$. Using the vanishing of $R^q h_{0,*}(\calO_{D(v)})$ for $q>0$ ({\it loc.\,cit.} applied to $h_0$) and flat base change, we get the vanishing of $R^qh_*(\calO_{D(w_0)})$ for $q >0$. Also, $h_*(\calO_{D(w_0)}) = \calO_{D^{\Box}(w)}$ by the Stein factorization of $h$, as $D^{\Box}(w)$ is smooth and integral and $h$ is birational. This shows that $h$, hence $g$, is trivial. Now the required vanishing of $R^qf_*(\calO_{D(w)})$ for $q > 0$ follows from flat base change applied to the Cartesian square. This finishes the proof.

\subsection{Central extensions}\label{central_extension}
Let $\phi\co G'\to G$ be a map of (connected) reductive $F$-groups which induces an isomorphism on adjoint groups $G'_\ad\simeq G_\ad$ (or equivalently on simply connected groups $G'_\scon\simeq G_\scon$). Then $S':=\phi^{-1}(S)^o\subset \phi^{-1}(T)^o=:T'$ is a maximal $F$-split torus contained in a maximal torus. This induces a map on apartments $\scrA(G',S',F)\to \scrA(G,S,F)$ under which the facets correspond bijectively to each other. We denote the image of $\bbf$ by the same letter. The map $G'\to G$ extends to a map on parahoric group schemes $\calG':=\calG'_\bbf\to \calG_{\bbf}=:\calG$, and hence to a map on twisted partial affine flag varieties $\Fl_{G',\bbf}\to \Fl_{G,\bbf}$. We are interested in comparing their Schubert varieties.  

There is a natural map on Iwahori-Weyl groups 
\[
\tilde{W}'=W(G',S',F)\longto W(G,S,F)=\tilde{W},
\]
which is compatible with the action on the apartments $\scrA(G',S',F)\to \scrA(G,S,F)$. For $w'\in \tilde{W}'$ denote by $w\in \tilde{W}$ its image. As the map $\Fl_{G',\bbf}\to \Fl_{G,\bbf}$ is equivariant compatibly with the map $L^+\calG'\to L^+\calG$, we get a map of projective $k$-varieties
\begin{equation}\label{Schubertad}
S_{w'}=S_{w'}(\bba,\bbf)\longto S_{w}(\bba,\bbf)=S_{w}.
\end{equation}

\begin{prop}\label{Schubertadprop}
For each $w'\in \tilde{W}'$, the map \eqref{Schubertad} is a finite birational universal homeomorphism, and induces an isomorphism on the normalizations.
\end{prop}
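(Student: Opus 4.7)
My approach is to verify the claimed properties by reducing, via the alcove manipulations of \S\ref{Alcove_Red}, to the case where both Schubert varieties lie in their respective neutral components, and then identifying their normalizations using the product decomposition of Proposition \ref{normal_prop}(ii). Using the identification of apartments from \S\ref{central_extension}, the map $\tilde W' \to \tilde W$ of Iwahori-Weyl groups is compatible with the semidirect product decomposition relative to $\bba$: it restricts to the identity $W'_\aff = W_\aff$ (via the common \'echelonnage root system determined by $G'_\scon \simeq G_\scon$) and sends $\Omega'_\bba$ to $\Omega_\bba$. Carrying out Steps 1--4 of \S\ref{Alcove_Red} in $G'$ and using $\phi$ to descend the auxiliary elements (any lift in $N'(F)$ of a required element in $\tilde W'$ is sent to a lift in $N(F)$ of the corresponding element in $\tilde W$), I reduce to the situation where $\bbf' = \bba$, $w' \in W'_\aff$ is right $\bbf$-minimal, $w = w' \in W_\aff$ is its image, and both Schubert varieties lie in the neutral components.

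After this reduction, both $S_{w'}$ and $S_w$ are stratified by Bruhat cells indexed by the common set $\{v \in W_\aff : v \leq w\}$. On each cell, the parametrization by affine root subgroups $U_\alpha$ (which are determined by $G_\scon \simeq G'_\scon$ and hence agree for $G'$ and $G$) identifies the cells as affine spaces of the same dimension $\ell(v)$, and the induced map between them is an isomorphism. In particular, $S_{w'} \to S_w$ is surjective and restricts to an isomorphism on the open Schubert cell, so it is birational.

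To identify normalizations, I apply Proposition \ref{normal_prop}(ii) to $w'$ and $w$ using the shared decomposition $G_\scon \simeq \prod_{j \in J} \Res_{F_j/F}(G_j)$. The resulting products $\tilde S_{w'} \simeq \prod_{j \in J} \tilde S_{w_j}$ and $\tilde S_w \simeq \prod_{j \in J} \tilde S_{w_j}$ have identical factors, yielding a canonical isomorphism $\tilde S_{w'} \simeq \tilde S_w$. By the universal property of normalization applied to $\tilde S_{w'} \to S_{w'} \to S_w$, there is a unique lift to $\tilde S_w$, and this lift coincides with the canonical isomorphism since both agree on the open Schubert cell.

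Finally, by Proposition \ref{normal_prop}(i), the vertical normalization maps in the commutative square
\[
\begin{tikzpicture}[baseline=(current bounding box.center)]
\matrix(a)[matrix of math nodes, row sep=1.5em, column sep=2em, text height=1.5ex, text depth=0.45ex]
{\tilde S_{w'} & \tilde S_w \\
S_{w'} & S_w \\};
\path[->](a-1-1) edge node[above] {$\sim$} (a-1-2);
\path[->](a-1-1) edge (a-2-1);
\path[->](a-2-1) edge (a-2-2);
\path[->](a-1-2) edge (a-2-2);
\end{tikzpicture}
\]
are finite birational universal homeomorphisms, and the top arrow is an isomorphism. A formal diagram chase then shows that the bottom arrow $S_{w'} \to S_w$ is also a finite birational universal homeomorphism, with induced map on normalizations given by the top isomorphism. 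The main technical obstacle is the compatibility of the alcove reduction with $\phi$, which rests on the splitting $\tilde W = W_\aff \rtimes \Omega_\bba$ being respected by $\phi$, a consequence of both splittings being defined relative to the same alcove $\bba$.
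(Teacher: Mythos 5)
Your argument has a genuine circularity. To identify the normalizations you invoke Proposition \ref{normal_prop}(ii), but the paper proves Proposition \ref{normal_prop}(ii) in \S\ref{proof_ii} \emph{by applying Proposition \ref{Schubertadprop}} to the map $G_\scon \to G$ (see the first paragraph of \S\ref{proof_ii}: ``Proposition \ref{Schubertadprop} applied to $G_\scon\to G$ shows that the normalization $\tilde S_{w_\aff}(\bba,\bbf)$ is isomorphic to the normalization of a Schubert variety inside $\Fl_{G_\scon,\bbf}$...''). So you cannot feed Proposition \ref{normal_prop}(ii) back into the proof of Proposition \ref{Schubertadprop}. The logical order in the paper is: \ref{normal_prop}(i) first (proved in \S\ref{proof_i} independently), then \ref{Schubertadprop}, then \ref{normal_prop}(ii).

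The fix is to establish the isomorphism $\tilde S_{w'}\xrightarrow{\sim}\tilde S_w$ directly, without the product decomposition. From the cell-by-cell analysis (essentially Lemma \ref{openSchubertad}) and Corollary \ref{Setad}, $S_{w'}\to S_w$ is birational and bijective on $k$-points; being proper and quasi-finite it is finite. Combining this with Proposition \ref{normal_prop}(i) (vertical normalization maps are finite birational universal homeomorphisms), the induced map $\tilde S_{w'}\to\tilde S_w$ is a birational, bijective, proper --- hence finite --- morphism between \emph{normal} integral schemes, so it is an isomorphism. This is exactly how the paper concludes, and it only uses \ref{normal_prop}(i), breaking the circularity. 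Your final diagram chase then goes through unchanged. As a minor secondary point, your cell-level identification via affine root subgroups is argued more delicately in the paper by induction on Demazure resolutions reducing to the rank-one $\bbP^1$ case, where the crucial fact is that $\calG'\to\calG$ is the identity on the $\calO_F$-extensions of the root subgroups \cite[4.6.3, 4.6.7]{BT84}; your sketch is morally the same but should make this identification explicit.
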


We need some preparation. The Iwahori-Weyl groups are equipped with a Bruhat order $\leq$ and a length function $l$ according to the choice of $\bba$. 

\begin{lem} \label{IWad}
The map $\tilde{W}'\to \tilde{W}$ induces an isomorphism of affine Weyl groups compatible with the simple reflections, and thus compatible with $\leq$ and $l$.
\end{lem}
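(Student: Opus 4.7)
The plan is to reduce everything to the fact that both the affine Weyl group and its Coxeter generating set depend only on the échelonnage root system, which in turn depends only on the simply connected cover (equivalently, on the adjoint group).

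More specifically, I would first recall the structure results of \cite{HR08}: for either $G$ or $G'$, the choice of alcove $\bba$ gives a decomposition $\tilde{W} = W_\aff(\breve{\Sigma}) \rtimes \Omega_\bba$, and similarly $\tilde{W}' = W_\aff(\breve{\Sigma}') \rtimes \Omega_\bba'$, where $\breve{\Sigma}$ (resp.\,$\breve{\Sigma}'$) denotes the échelonnage root system of $G$ (resp.\,$G'$). The key point is that the échelonnage roots are the nondivisible relative roots of $G_\scon$ (or equivalently of $G_\ad$) in the sense that the affine root hyperplanes in the apartment coincide with those coming from $G_\scon$. Because $\phi$ induces an isomorphism $G'_\scon \simeq G_\scon$, the identification of apartments $\scrA(G',S',F) \to \scrA(G,S,F)$ identifies the two sets of affine root hyperplanes; in particular the alcove $\bba$ corresponds to $\bba$ on both sides, and the reflections across its walls correspond.

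Consequently, the induced map $W_\aff(\breve{\Sigma}') \to W_\aff(\breve{\Sigma})$ is an isomorphism of Coxeter groups sending simple reflections (with respect to $\bba$) to simple reflections. Since the Bruhat order $\leq$ and the length function $l$ on $\tilde{W}$ (resp.\,$\tilde{W}'$) are defined purely in terms of this Coxeter structure on the affine Weyl group, together with the convention that elements of $\Omega_\bba$ have length $0$ and are incomparable with nontrivial $W_\aff$-elements except through translation by their $W_\aff$-parts, the bijection preserves both $\leq$ and $l$ when restricted to $W_\aff$, which is the statement of the lemma.

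The only mildly delicate point is to make precise that the échelonnage root system really depends only on the adjoint group; this is standard from Bruhat–Tits theory, since $\breve{\Sigma}$ is constructed from the relative root system of $G$ together with information about the reduced building, both of which factor through $G_\ad$. I would cite \cite[Prop.\,12\,ff.]{HR08} for the quasi-Coxeter structure and Bruhat–Tits theory for the invariance of $\breve{\Sigma}$ under central isogenies, making the argument essentially a bookkeeping verification rather than a computation.
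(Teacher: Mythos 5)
Your proposal is correct and follows essentially the same route as the paper: both reduce to the fact that the affine Weyl group (as a Coxeter group, with its generating reflections determined by the alcove $\bba$) depends only on the simply connected cover, i.e.\ on the \'echelonnage data, and invoke \cite{HR08} and Bruhat--Tits theory. The paper states this slightly more economically by identifying both affine Weyl groups with the Iwahori--Weyl group $\tilde W_{\scon}$ of $G_{\scon}=G'_{\scon}$ via \cite[5.2.10]{BT84} and \cite[Prop.\,13]{HR08}, whereas you pass through the \'echelonnage root system $\breve\Sigma$; this is the same underlying mechanism.
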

\begin{proof} 
Let $\tilde{W}_{\rm sc}$ be the Iwahori-Weyl group associated to the simply connected cover $\psi: G_{\rm sc} \to G_{\rm der}$ and the torus $S_{\rm sc} := \psi^{-1}(S \cap G_{\rm der})^o$. By \cite[5.2.10]{BT84}, $\tilde{W}_{\rm sc}$ can be identified with the affine Weyl group for $(G, S, \bba)$ as well as $(G', S', \bba)$, cf.\,also \cite[Prop.\,13]{HR08}.
\end{proof}

We denote the affine Weyl $W_\aff$ of $\tilde{W}$, resp.~$\tilde{W}'$ by the same symbol.

\begin{cor} \label{Setad}
For each $w'\in \tilde{W}'$, the map $\tilde{W}'\to \tilde{W}$ induces a bijection
\[
\{v'\in \tilde{W}'\;|\; v'\leq w'\}\overset{\simeq}{\longto} \{v\in \tilde{W}\;|\; v\leq w\}
\]
under which the right $\bbf$-minimal elements correspond to each other.
\end{cor}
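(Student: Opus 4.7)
The plan is to use the decomposition $\tilde{W} = W_\aff \rtimes \Om$ (and likewise $\tilde{W}' = W_\aff \rtimes \Om'$) together with the isomorphism $W_\aff' \overset{\sim}{\to} W_\aff$ of Lemma \ref{IWad}. While the induced map $\tilde{W}' \to \tilde{W}$ is typically neither injective nor surjective on the $\Om$-component, any Bruhat interval $\{v' \leq w'\}$ is contained in a single $\Om'$-coset, and on such a coset the map becomes bijective by Lemma \ref{IWad}.

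Concretely, I would write $w' = \tau' \cdot w'_\aff$ with $\tau' \in \Om'$ and $w'_\aff \in W_\aff$, and let $w = \tau \cdot w_\aff$ denote its image in $\tilde W$. The key input is the standard structure of the Bruhat order on the quasi-Coxeter group $W_\aff \rtimes \Om$: two elements are comparable only if they share the same $\Om$-component, and the comparison then reduces to the Bruhat order on $W_\aff$. This yields
\[
\{v' \leq w'\} \,=\, \tau' \cdot \{v'_\aff \in W_\aff : v'_\aff \leq w'_\aff\}, \quad \{v \leq w\} \,=\, \tau \cdot \{v_\aff \in W_\aff : v_\aff \leq w_\aff\},
\]
and the map $\tau' v'_\aff \mapsto \tau v_\aff$, where $v'_\aff \mapsto v_\aff$ via Lemma \ref{IWad}, is then manifestly a bijection.

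For the compatibility with right $\bbf$-minimality I would use that elements of $\Om$ have length zero (they stabilize $\bba$), so multiplication by $\tau'$ preserves $l$. Thus right $\bbf$-minimality of $v' = \tau' v'_\aff$ in $\tilde{W}'$ amounts to right $\bbf$-minimality of $v'_\aff$ in $W_\aff$ relative to $W_{\bbf}'$, and since Lemma \ref{IWad} matches simple reflections, $W_{\bbf}'$ is carried to $W_\bbf$ and the property transports across the bijection. There is no serious obstacle here; the only substantive ingredient is Lemma \ref{IWad}, and the argument is essentially formal once one recalls the quasi-Coxeter structure of the Bruhat order.
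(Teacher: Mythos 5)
Your proof is correct and takes essentially the same route as the paper: decompose via $\tilde W' = \Om' \ltimes W_\aff$, observe that a Bruhat interval lives in a single $\Om'$-coset, reduce to the case $\tau'=1$ (which the paper does by left translation), and then apply Lemma~\ref{IWad} to transfer both the Bruhat interval and right $\bbf$-minimality. You spell out the quasi-Coxeter Bruhat-order facts and the length-zero property of $\Om'$ that the paper's two-line proof leaves implicit, but the underlying argument is the same.
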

\begin{proof} Write $w'=\tau'\cdot w'_1$ according to $\tilde{W}'=\Om'\ltimes W_\aff$. After left translation by $(\dot{\tau}')^{-1}$ for any representative $\dot{\tau}'\in \on{Norm}_{G'}(S')(F)$ of $\tau'$, we may assume $\tau'=1$. Lemma \ref{IWad} implies the corollary. 
\end{proof}

\begin{lem} \label{openSchubertad}
For each $w'\in \tilde{W}'$, the map 
\[
L^+\calG'_{\bba}\cdot{w}'\cdot L^+\calG'/L^+\calG'\,\longto\, L^+\calG_{\bba}\cdot {w}\cdot L^+\calG/L^+\calG
\] 
is an isomorphism.
\end{lem}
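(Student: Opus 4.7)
My plan is to follow the reduction pattern of \S\ref{Alcove_Red} combined with Lemma \ref{right_min_lem}, first passing to the Iwahori level and then to the case $w' \in W_\aff$, where the statement reduces to one about the common simply connected cover $G_\scon = G'_\scon$.

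First, since both open orbits depend only on the respective cosets modulo $W_\bbf$, and since Corollary \ref{Setad} identifies right $\bbf$-minimal representatives in corresponding cosets, I may assume that $w'$ (and hence $w$) is right $\bbf$-minimal. Applying Lemma \ref{right_min_lem} separately to $G'$ and to $G$ then yields a commutative square in which the projections from the Iwahori-level orbits
\[
L^+\calG'_\bba \cdot w' \cdot L^+\calG'_\bba/L^+\calG'_\bba \quad \text{and} \quad L^+\calG_\bba \cdot w \cdot L^+\calG_\bba/L^+\calG_\bba
\]
onto the respective $L^+\calG'$- and $L^+\calG$-orbits are isomorphisms. It therefore suffices to show that the induced map at the Iwahori level is an isomorphism.

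Next, I would use the decomposition $\tilde W' = \Om' \ltimes W_\aff$, which is compatible under $\tilde W' \to \tilde W$ with $\tilde W = \Om \ltimes W_\aff$ by Lemma \ref{IWad}, to write $w' = \tau' \cdot w'_\aff$ and correspondingly $w = \tau \cdot w_\aff$. Left translation on $\Fl_{G', \bba}$ by any lift $\dot\tau'^{-1} \in N_{G'}(S')(F)$ normalizes $L^+\calG'_\bba$ (as $\tau'$ stabilizes $\bba$) and commutes, via the corresponding $\dot\tau^{-1} \in N_G(S)(F)$, with the map $\Fl_{G', \bba} \to \Fl_{G, \bba}$. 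This reduces the problem to the case $w' = w'_\aff \in W_\aff$, in which case $w = w_\aff$ is identified with $w'$ under Lemma \ref{IWad}, and both Iwahori orbits lie in the neutral components $\Fl^o_{G', \bba}$ and $\Fl^o_{G, \bba}$ respectively.

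The final step is to identify both Iwahori orbits with a common object depending only on $G_\scon = G'_\scon$. This can be carried out either by invoking the canonical $L^+$-equivariant identification of each neutral component with the affine flag variety $\Fl_{G_\scon, \bba}$ of the common simply connected cover (cf.\,\cite[\S 5]{PR08}, \cite[Prop.\,13]{HR08}), or by fixing a reduced expression $w_\aff = s_{\alpha_1}\cdots s_{\alpha_n}$ in $W_\aff$ and using the explicit product description $\prod_{i=1}^n U_{\beta_i}$ of the Iwahori cell via affine root subgroups, noting that the affine root subgroups $U_\alpha \subset LG$ and $U'_\alpha \subset LG'$ are canonically identified under $G' \to G$ since they depend only on the affine root data of $G_\scon$. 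The main obstacle will be tracking this last identification carefully so that the map $\Fl^o_{G', \bba} \to \Fl^o_{G, \bba}$ restricts to the identity on the Iwahori cell; this is a standard consequence of Bruhat-Tits theory as used throughout \cite{PR08, HR08}.
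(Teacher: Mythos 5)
Your reductions match the paper's: it too reduces (via the proof of Corollary \ref{Setad} and Lemma \ref{right_min_lem}) to the case $w' \in W_\aff$ right $\bbf$-minimal with $\bbf = \bba$, and the key final input is the same fact you identify, namely that $\calG' \to \calG$ induces the identity on the $\calO_F$-extensions of the root subgroups (the paper cites \cite[4.6.3, 4.6.7]{BT84}). Of your two suggested ways to finish, route (b) is essentially the paper's argument modulo packaging: rather than writing the open Iwahori cell directly as a product of affine root subgroups, the paper fixes a reduced decomposition $w' = s'_1 \cdots s'_n$, forms the compatible Demazure resolutions $D(w') \to D(w)$ (which are isomorphisms onto the open cells), and by induction on $n$ reduces to $n=1$, i.e.\ to showing the map on rank-one cells $\bbA^1_k \to \bbA^1_k$ is the identity, which is exactly the root-subgroup identification. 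Either packaging works. Route (a), however, is circular and should be dropped: there is a canonical map $\Fl_{G_\scon,\bba} \to \Fl^o_{G,\bba}$, but one only knows a priori that it is a bijective universal homeomorphism, not an isomorphism of ind-schemes; showing it restricts to an isomorphism on open cells is precisely what the lemma asserts (and indeed on whole Schubert varieties the corresponding map is in general only a universal homeomorphism, cf.\ Proposition \ref{Schubertadprop}), so you cannot invoke such an identification as a black box.
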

\begin{proof} 
By the proof of Corollary \ref{Setad}, we may assume that $w'\in W_\aff$, and that $w'$ is right $\bbf$-minimal. By Lemma \ref{right_min_lem}, we may further assume that $\bbf=\bba$ is the alcove. Let $w'=s'_1\cdot\ldots\cdot s'_n$ be a reduced decomposition into simple reflections. By Lemma \ref{IWad}, the decomposition $w=s_{1}\cdot\ldots\cdot s_{n}$ is a reduced decomposition of $w$. Let $\pi_{w'}\co D(w')\to S_{w'}$ (resp.~$\pi_{w}\co D(w)\to S_{w}$) be the Demazure resolution associated with the reduced decomposition, cf. \cite[Prop 8.8]{PR08}. There is a commutative diagram of $k$-schemes
\[
\begin{tikzpicture}[baseline=(current  bounding  box.center)]
\matrix(a)[matrix of math nodes, 
row sep=1.5em, column sep=2em, 
text height=1.5ex, text depth=0.45ex] 
{D(w')&D(w)  \\ 
S_{w'}& S_{w}, \\}; 
\path[->](a-1-1) edge node[above] {}  (a-1-2);
\path[->](a-1-1) edge node[left] {$\pi_{w'}$}  (a-2-1);
\path[->](a-2-1) edge node[below] {}  (a-2-2);
\path[->](a-1-2) edge node[right] {$\pi_{w}$}  (a-2-2);
\end{tikzpicture}
\]
where the vertical maps are birational and isomorphisms onto the open cells. Hence, it is enough to show that $D(w')\to D(w)$ is an isomorphism. By induction on $l(w')=n$, we reduce to the case $w'=s'$ (and hence $w=s$) is a simple reflection. In this case, $\pi_{w'}$ and $\pi_{w}$ are isomorphisms, and we have to show that $\bbP^1_k\simeq S_{w'}\to S_{w}\simeq \bbP^1_k$ is an isomorphism. The crucial observation is now that the map $\calG'\to \calG$ on Bruhat-Tits groups schemes is the identity on the $\calO_F$-extension of the root subgroups (cf.\,\cite[4.6.3, 4.6.7]{BT84}). Hence, the map $S_{w'}\to S_{w}$ restricted to the open cells $\bbA^1_k\subset \bbP^1_k$ is the identity. The lemma follows. 
\end{proof}

\begin{proof}[Proof of Proposition \ref{Schubertadprop}]
The $ L^+\calG'_{\bba}$-orbits (resp. $ L^+\calG_{\bba}$-orbits) in $S_{w'}$ (resp. $S_{w}$) correspond under the map $S_{w'}\to S_{w}$ bijectively to each other, cf.~Corollary \ref{Setad}. Hence, Lemma \ref{openSchubertad} implies that the map $S_{w'}\to S_{w}$ is birational and bijective on $k$-points. As being quasi-finite and proper implies finite, the map in question must be finite. To see that the map is a universal homeomorphism consider the commutative diagram of $k$-schemes 
\[
\begin{tikzpicture}[baseline=(current  bounding  box.center)]
\matrix(a)[matrix of math nodes, 
row sep=1.5em, column sep=2em, 
text height=1.5ex, text depth=0.45ex] 
{\tilde{S}_{w'}&\tilde{S}_{w}  \\ 
S_{w'}& S_{w}, \\}; 
\path[->](a-1-1) edge node[above] {}  (a-1-2);
\path[->](a-1-1) edge node[left] {}  (a-2-1);
\path[->](a-2-1) edge node[below] {}  (a-2-2);
\path[->](a-1-2) edge node[right] {}  (a-2-2);
\end{tikzpicture}
\]
where the vertical maps are the normalization morphisms. By Proposition \ref{normal_prop} i), the vertical maps are finite birational universal homeomorphisms. In particular, the map $\tilde{S}_{w'}\to \tilde{S}_{w}$ is a birational bijective proper, hence finite, morphism of normal varieties, and therefore it is an isomorphism. 
This shows that the map $S_{w'}\to S_{w}$ is a universal homeomorphism, and the proposition follows. 
\end{proof}

\subsection{Simple reduction}\label{Absolutely_Simple_Red} There is a finite index set $J$, and an isomorphism of $F$-groups
\[
G_\scon\,=\, \prod_{J\in J}\Res_{F_j/F}(G_j),
\]
where each $F_j/F$ is a finite separable extension, and each $G_j$ is an absolutely almost simple, simply connected $F_j$-group. Under the identification of buildings $\scrB(G_\scon,F)=\prod_{j\in J}\scrB(G_j,F_j)$ the facet $\bbf$ corresponds to facets $\bbf_j\subset \scrB(G_j,F_j)$ for each $j\in J$.

\begin{lem} \label{simply_connected_variety}
There is an isomorphism of $k$-ind-schemes
\[
\Fl_{G_\scon,\bbf}\,\simeq\, \prod_{j\in J}\Fl_{G_j,\bbf_j}
\]
under which the Schubert varieties correspond isomorphically to each other. 
\end{lem}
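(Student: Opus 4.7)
The plan is to decompose the loop group data compatibly with the product decomposition of $G_\scon$ and then pass to the quotient. The two facts that drive everything are compatibility of the loop functors with finite products of group schemes and with Weil restriction of scalars along finite separable extensions of local fields.

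First I would establish the Weil restriction identity. Since $k$ is algebraically closed, each $F_j/F$ is totally ramified, so a choice of uniformizer identifies $F_j \simeq k\rpot{t_j}$, and for every $k$-algebra $R$ one has natural isomorphisms
\[
R\rpot{t}\otimes_F F_j \,\simeq\, R\rpot{t_j}, \qquad R\pot{t}\otimes_{\calO_F}\calO_{F_j} \,\simeq\, R\pot{t_j}.
\]
Writing $\calG_{j,\bbf_j}$ for the parahoric $\calO_{F_j}$-group scheme attached to $\bbf_j\subset \scrB(G_j,F_j)$, the parahoric group scheme of $\Res_{F_j/F}(G_j)$ at $\bbf_j$ is $\Res_{\calO_{F_j}/\calO_F}(\calG_{j,\bbf_j})$ (by construction in \cite{HRb}). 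The identities above then give functorial isomorphisms
\[
L\bigl(\Res_{F_j/F}(G_j)\bigr) \,\simeq\, L_{F_j}G_j, \qquad L^+\bigl(\Res_{\calO_{F_j}/\calO_F}(\calG_{j,\bbf_j})\bigr) \,\simeq\, L^+_{F_j}\calG_{j,\bbf_j},
\]
where the subscript $F_j$ indicates that the loop functors are formed using $F_j$ as the local ground field.

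Second, because $L$ and $L^+$ commute with finite products of group schemes, combining the decomposition $G_\scon = \prod_{j\in J}\Res_{F_j/F}(G_j)$ and the analogous product decomposition of the parahoric $\calG_{\scon,\bbf} = \prod_{j\in J}\Res_{\calO_{F_j}/\calO_F}(\calG_{j,\bbf_j})$ yields
\[
LG_\scon \,\simeq\, \prod_{j\in J} L_{F_j}G_j, \qquad L^+\calG_{\scon,\bbf} \,\simeq\, \prod_{j\in J} L^+_{F_j}\calG_{j,\bbf_j}.
\]
Since \'etale sheafification preserves finite products, passing to the \'etale quotient gives the claimed isomorphism of $k$-ind-schemes $\Fl_{G_\scon,\bbf}\simeq\prod_{j\in J}\Fl_{G_j,\bbf_j}$.

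Third, for the compatibility with Schubert varieties, the Iwahori-Weyl group decomposes as $\tilde{W}(G_\scon)=\prod_{j\in J}\tilde{W}(G_j)$ compatibly with the choice of facets (cf.\,\cite[Prop.\,13]{HR08}), so any $w\in \tilde{W}(G_\scon)$ corresponds to a tuple $(w_j)_{j\in J}$ with $w_j\in \tilde{W}(G_j)$. Under the ind-scheme isomorphism above, the $L^+\calG_{\scon,\bba}$-orbit of $w$ on $\Fl_{G_\scon,\bbf}$ matches the product of the $L^+_{F_j}\calG_{j,\bba_j}$-orbits of $w_j$ on $\Fl_{G_j,\bbf_j}$. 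Taking reduced orbit closures and using that a finite product of reduced $k$-schemes is reduced (as $k$ is algebraically closed) yields $S_w(\bba,\bbf)\simeq \prod_{j\in J} S_{w_j}(\bba_j,\bbf_j)$. The main bookkeeping point is the Weil restriction identity at the integral level and its naturality as one varies $R$, but once this is in place the rest is formal from the compatibility of loop functors with finite products and of quotients with finite products in the \'etale topology.
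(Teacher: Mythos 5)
Your proof is correct and takes essentially the same approach as the paper's. The paper organizes the argument as two separate reduction steps (direct products, then a single Weil restriction) while you handle both at once, but the ingredients are identical: the identities $R\pot{t}\otimes_{\calO_F}\calO_{F_j}\simeq R\pot{t_j}$ and $R\rpot{t}\otimes_F F_j\simeq R\rpot{t_j}$ (using that $F_j/F$ is totally ramified since $k$ is algebraically closed), the identification of the parahoric of $\Res_{F_j/F}(G_j)$ with $\Res_{\calO_{F_j}/\calO_F}(\calG_{j,\bbf_j})$ from \cite[Prop.\,4.7]{HRb}, compatibility of $L$ and $L^+$ with finite products, and the product decomposition of the Iwahori--Weyl group.
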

\begin{proof}
It is enough to treat the following two cases separatedly.\smallskip\\
{\em Products}. If $G=G_1\times G_2$ is a direct product of two $F$-groups, then we have a direct product of affine Weyl groups $W_\aff=W_{\aff,1}\times W_{\aff,2}$. Now, for each $w=(w_1,w_2)\in W_\aff$, there is an equality on Schubert varieties $S_w=S_{w_1}\times S_{w_2}$ which is easy to prove. In particular, if both $S_{w_1}$ and $S_{w_2}$ are normal, then $S_w$ is normal by \cite[06DG]{StaPro}.\smallskip\\
{\em Restriction of scalars}. \label{Restriction_subsec} Let $G=\Res_{F'/F}(G')$ where $F'/F$ is a finite separable extension, and $G'$ is an $F'$-group. By \cite[Prop 4.7]{HRb}, we have $\calG_\bbf=\Res_{\calO_{F'}/\calO_F}(\calG'_\bbf)$ where we use the identification $\scrB(G,F)=\scrB(G',F')$. Now choose\footnote{The result is independent of the choice of the uniformizers $u$, resp.~$t$ because loop groups can be defined without a reference to them, cf.~\cite[\S(1.3) footnote 2]{BL94} (or \cite[\S 2]{Ri13}).} a uniformizer $u\in \calO_{F'}$. Since $k$ is algebraically closed, we have $\calO_{F'}=k\pot{u}$ resp.~$F'=k\rpot{u}$. For any $k$-algebra $R$, we have $R\pot{t}\otimes_{\calO_{F}}\calO_{F'}=R\pot{u}$ resp.~$R\rpot{t}\otimes_{{F}} {F'}=R\rpot{u}$. This gives an equality on loop groups $L^+\calG_\bbf=L^+\calG'_\bbf$ resp.~$LG=LG'$. Hence, there is an equality on twisted affine flag varieties $\Fl_{G,\bbf}=\Fl_{G',\bbf}$ under which the Schubert varieties correspond to each other. 
\end{proof}

\subsection{Proof of Proposition \ref{normal_prop} ii)} \label{proof_ii}
Consider a general Schubert variety $S_w(\bbf',\bbf)$ for some $\bbf',\bbf\subset \scrB(G,F)$, $w\in LG(k)$. By Corollary \ref{Reduction_Cor}, there exist a maximal $F$-split torus $S\subset G$, an alcove $\bbf\subset \bar{\bba}\subset \scrA(G,S,F)$, and an element $w_\aff\in W_\aff$ such that $S_w(\bbf',\bbf) \simeq S_{w_\aff}(\bba,\bbf)$ as $k$-schemes. Proposition \ref{Schubertadprop} applied to $G_\scon\to G$ shows that the normalization $\tilde S_{w_\aff}(\bba,\bbf)$ is isomorphic to the normalization of a Schubert variety inside 
\[
\Fl_{G_\scon,\bbf}\,\simeq\, \prod_{j\in J}\Fl_{G_j,\bbf_j},
\]
cf.~Lemma \ref{simply_connected_variety}. 
Thus, $\tilde S_{w_\aff}(\bba,\bbf)\simeq \prod_{j\in J} \tilde S_{w_j}(\bba_j,\bbf_j)$ for some $w_j\in LG_j(k)$. Now assume that $S_w\simeq S_{w_\aff}(\bba,\bbf)=:S_{w_\aff}$ is normal. It remains to prove that each variety $S_{w_j}:=S_{w_j}(\bba_j,\bbf_j)$ is normal as well. First note that the canonical map $\prod_{j\in J}S_{w_j}\to S_{w_\aff}$ must be an isomorphism by Proposition \ref{Schubertadprop}, so that the product of the varieties $S_{w_j}$ is normal. Fix some $j_0\in J$, and consider
\[
\tilde{S}_{w_{j_0}}\times \prod_{j\not = j_0}S_{w_j}\to S_{w_{j_0}}\times\prod_{j\not = j_0}S_{w_j},
\]
where $\tilde{S}_{w_{j_0}}\to S_{w_{j_0}}$ denotes the normalization. By Proposition \ref{normal_prop} i), this map is finite and birational. As the target is normal, it must be an isomorphism, so that  $\tilde{S}_{w_{j_0}}\to S_{w_{j_0}}$ must be an isomorphism (because being an isomorphism is fpqc local on the target). This proves Proposition \ref{normal_prop} ii). \qed

\section{Reducedness of special fibers of local models}

\subsection{Weil restricted local models in mixed characteristic}

We now switch back to the notation of \S\ref{main_result_sec}. We first treat the case where $F/\bbQ_p$ is a mixed characteristic local field. Recall that in this case we are assuming $G=\Res_{K/F}(G_1)$ where $K/F$ is a finite extension with residue field $k$ of characteristic $p>0$, and $G_1$ is a tamely ramified connected reductive $K$-group. To simplify the discussion and notation we first assume that $K/F$ is totally ramified. 
The extension to the more general case is easy and is explained in Remark \ref{extension_general_rmk} below.
We fix a uniformizer $\varpi\in K$, and let $Q\in \calO_F[u]$ be its minimal polynomial (an Eisenstein polynomial). The reader who is only concerned with Pappus-Zhu local models may take $K=F$ and $G_1 = G$ throughout this discussion.

Under the identification $\scrB(G,F)=\scrB(G_1,K)$ (cf.~\cite[Prop.~4.6]{HRb}), the facet $\bbf$ corresponds to a facet denoted $\bbf_1$. We denote by $\calG_1=\calG_{\bbf_1}$ the parahoric $\calO_K$-group scheme of $G_1$ associated with $\bbf_1$. Then $\calG:=\Res_{\calO_K/\calO_F}(\calG_{1})$ is the parahoric $\calO_F$-group scheme of $G$ associated with $\bbf$, cf.~\cite[Cor.~4.8]{HRb}. We let $A_1\subset G_1$ be a maximal $K$-split torus whose apartment $\scrA(G_1,A_1,K)$ contains $\bbf_1$.

\subsubsection{Recollections}\label{recollections}
Following the method of \cite[\S3]{PZ13}, a connected reductive $\calO_F[u^\pm]$-group scheme $\underline{G}_1$ is constructed in \cite[\S3.1, Prop.~3.3]{Lev16} (cf.~also \cite[Prop.~4.10 i)]{HRb}) together with an isomorphism
\begin{equation}\label{spread}
\underline{G}_1\otimes_{\calO_F[u^\pm],u\mapsto \varpi}K\;\simeq\; G_1.
\end{equation}
We let $\underline{A}_1\subset \underline{G}_1$ be the split $\calO_F[u^\pm]$-torus extending $A_1\subset G_1$. By \cite[\S 4.1.3]{PZ13}, the isomorphism \eqref{spread} induces identifications of apartments
\begin{equation}\label{kappa_iso}
\scrA(G_1,A_1,K)\;\simeq\; \scrA(\underline{G}_{1,\kappa\rpot{u}},\underline{A}_{1,\kappa\rpot{u}}, \kappa\rpot{u}),
\end{equation}
for $\kappa=k,F$. We denote by $\bbf_{1,\kappa\rpot{u}}$ the facet corresponding to $\bbf_1$. By \cite[Thm.~4.1]{PZ13} (cf.~also \cite[Thm.~3.3.3]{Lev16}), there exists a unique $\calO_F[u]$-group scheme $\underline{\calG}_1$ with the following three properties: a) the restriction of $\underline{\calG}_1$ to $\calO_F[u^\pm]$ is $\underline{G}_1$; b) the base change $\underline{\calG}_1\otimes_{\calO_F[u],u\mapsto \varpi}\calO_K$ is $\calG_1$; c) the base change $\underline{\calG}_1\otimes_{\calO_F[u]}\kappa\rpot{u}$ is the parahoric $\kappa\pot{u}$-group scheme associated with the facet $\bbf_{1,\kappa\rpot{u}}$ under \eqref{kappa_iso} for $\kappa=k,F$. Note that the group scheme $\underline{\calG}_1$ is already uniquely determined by property a) and property c) with $\kappa=F$ by \cite[4.2.1]{PZ13}. 

Following \cite[Def 4.1.1]{Lev16} (see also \cite[\S4.4]{HRb} for how this fits into the general picture of Beilinson-Drinfeld Grassmannians), we define the symbol $\Gr_{\calG}$ to be the functor on the category of $\calO_F$-algebras $R$ given by the isomorphism classes of tuples $(\calF,\al)$ with
\begin{equation}\label{BD_Grass_dfn}
\begin{cases}
\text{$\calF$ a $\underline{\calG}_1$-torsor on $\Spec(R[u])$};\\
\text{$\al\co \calF|_{\Spec(R[u][\nicefrac{1}{Q}])}\simeq \calF^0|_{\Spec(R[u][\nicefrac{1}{Q}])}$ a trivialization},
\end{cases}
\end{equation}
where $\calF^0$ denotes the trivial torsor. If $Q=u-\varpi$, i.e., $K=F$, then $\Gr_{\calG}$ is the BD-Grassmannian defined in \cite[6.2.3; (6.11)]{PZ13}. Informally, we think about $\Gr_{\calG}$ as being the Beilinson-Drinfeld Grassmannian associated with the parahoric $\calO_F$-group scheme $\calG$.

By \cite[Thm.~4.2.11]{Lev16} (cf.\,also \cite[Thm.~4.16]{HRb}), the functor $\Gr_{\calG}$ is representable by an ind-projective ind-scheme over $\calO_F$. Its generic fiber is equivariantly (for the left action of the loop group) 
isomorphic to the usual affine Grassmannian $\Gr_{G}$ formed using the parameter $z:=u-\varpi\in K[u]$. Its special fiber is canonically isomorphic to the twisted affine flag variety $\Fl_{G^\flat,\bbf^\flat}$ where we denote
\begin{equation} \label{Gflat}
G^\flat:=\underline{G}_1\otimes_{\calO_F[u^\pm]} k\rpot{u}, \;\;\;\bbf^\flat:=\bbf_{1,k\rpot{u}}.
\end{equation}
Informally, we think about the $k\rpot{u}$-group $G^\flat$ as being a connected reductive group of the ``same type'' as the $K$-group $G_1$. By the discussion above, it is equipped with an identification of apartments $\scrA(G^\flat,A^\flat,k\rpot{u})=\scrA(G_1,A_1,K)$ where $A^\flat:=\underline{A}_1\otimes k\rpot{u}$.

Recall we fixed a conjugacy class $\{\mu\}$ of geometric cocharacters in $G$ with reflex field $E/F$. This defines a closed subscheme $\Gr_{G}^{\leq \{\mu\}}\subset \Gr_{G}\otimes_F E$ inside the affine Grassmannian which is a (geometrically irreducible) projective $E$-variety. Following \cite[Def 7.1]{PZ13} and \cite[Def 4.2.1]{Lev16} (cf.\,also  \cite[Prop 4.2.4]{Lev16}), the {\em local model} $M_{\{\mu\}}=M(\uG_1,\calG_\bbf,\{\mu\},\varpi)$ is the scheme theoretic closure of the locally closed subscheme 
\[
\Gr_{G}^{\leq\{\mu\}}\,\hookto\, \Gr_{G}\otimes_F E\,\hookto\, \Gr_{\calG}\otimes_{\calO_F}\calO_E.
\]
By definition, the local model is a reduced flat projective $\calO_E$-scheme, and equipped with an embedding of its special fiber
\[
\overline{M}_{\{\mu\}}:=M_{\{\mu\}}\otimes_{\calO_E}k_E\;\hookto\; \Fl_{G^\flat,\bbf^\flat}\otimes_kk_E.
\]
We now define the admissible locus $\calA(G,\{\mu\})\subset \Fl_{G^\flat,\bbf^\flat}\otimes_k\bar{k}$. Recall from \cite[\S5.4]{HRb} that there are identifications of Iwahori-Weyl groups
\[
W=W(G, A, \breve{F})=W(G_1,A_1,\breve{K})=W(G^\flat,A^\flat,\bar{k}\rpot{u}).
\]
Then the admissible locus $\calA(G,\{\mu\})$ is defined as the union of the $(\bbf^\flat,\bbf^\flat)$-Schubert varieties $S_w\subset \Fl_{G^\flat,\bbf^\flat}\otimes_k\bar{k}$ where $w$ runs through the elements of the admissible set $\Adm_{\{\mu\}}^\bbf\subset W_\bbf\backslash W/W_\bbf$. Under the assumption that all Schubert varieties inside $\calA(G,\{\mu\})$ are normal, we show in the next subsection that $\overline{M}_{\{\mu\}}\otimes \bar{k}=\calA(G,\{\mu\})$.

\subsubsection{Proof of Theorem \ref{special_fiber_thm} i\textup{)} in mixed characteristic}\label{special_fibers} The proof uses the Coherence Conjecture \cite{Zhu14}, and then follows easily from the method in \cite[\S 9.2.2]{PZ13} using \cite[Thm.~4.3.2]{Lev16} and Proposition \ref{normal_prop}.

Note that the inclusion $\calA(G,\{\mu\})\subseteq \overline{M}_{\{\mu\}}\otimes_{k_E}\bar{k}$ is proven as part of \cite[Thm.~5.14]{HRb}. Let $\calV_1=\Lie \,\underline{\calG}_1$ denote the Lie algebra, which is a free $\calO_F[u]$-module of rank $\dim_K(G_1)$. The adjoint representation $\underline{\calG}_1\to \GL(\calV_1)$ induces by functoriality a morphism of ind-projective $\calO_F$-ind-schemes
\[
\ad\co \Gr_{\calG}\;\to\; \Gr_{\GL(\calV_1)},
\]
where the target is defined as in \eqref{BD_Grass_dfn} using the $\calO_F[u]$-group scheme $\GL(\calV_1)$. 

Let $\calL_{\det}$ be the determinant line bundle on the target, and denote by $\calL:=\ad^*(\calL_{\det})$ its pullback.
Let $\calL_{\sF}$ (resp.~$\calL_{\bar{k}}$) denote the restriction of $\calL$ to the geometric generic (resp.~geometric special) fiber $\Gr_{\calG,\sF}=\Gr_{G,\sF}$ (resp.~$\Gr_{\calG,\bar{k}}=\Fl_{G^\flat,\bbf^\flat}\otimes \bar{k}$). 

\begin{lem} \label{line_bundle}
The pullback of the line bundle $\calL$ to $M_{\{\mu\}}$ is relatively ample over $\calO_E$, and for all $n\geq 1$ one has
\begin{equation}\label{Coherence}
\dim_{\bar F} \Ga\big(M_{\{\mu\},\sF},\calL_\sF^{\otimes n}\big)\;=\; \dim_{\bar k}\Ga\big(\calA(G,\{\mu\})_{\bar{k}},\calL_{\bar{k}}^{\otimes n}\big).
\end{equation}
\end{lem}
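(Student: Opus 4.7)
The plan is to treat the two assertions separately. Relative ampleness of $\calL$ on $M_{\{\mu\}}$ will follow by transporting ampleness of the determinant line bundle from the ambient general-linear BD-Grassmannian along the adjoint morphism. The dimension equality \eqref{Coherence} is the content of the Coherence Conjecture of Pappas-Rapoport, proved in equal characteristic by Zhu \cite{Zhu14} and transported to the mixed characteristic setting via the constructions of \S\ref{recollections}.

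For relative ampleness, I would argue as in \cite[Prop.~8.1]{PZ13} and \cite[Prop.~4.3.1]{Lev16}. The determinant line bundle $\calL_{\det}$ on $\Gr_{\GL(\calV_1)}$ is very ample on each bounded closed subscheme relatively over $\calO_F$, being the usual Pl\"ucker line bundle on a type $A$ affine Grassmannian. The kernel of the adjoint representation $\underline{\calG}_1 \to \GL(\calV_1)$ is the center $Z(\underline{\calG}_1)$, which is of multiplicative type, so the induced morphism $\ad \co \Gr_\calG \to \Gr_{\GL(\calV_1)}$ is quasi-finite on bounded loci; being also proper, it is finite there. Hence $\calL|_{M_{\{\mu\}}} = \ad^{*} \calL_{\det}|_{M_{\{\mu\}}}$ is relatively ample over $\calO_E$.

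For \eqref{Coherence}, I would invoke the Coherence Conjecture \cite[Thm.~6.5]{Zhu14}. The generic fiber $M_{\{\mu\},\sF}$ is the Schubert variety $\Gr_{G,\sF}^{\leq\{\mu\}}$ in the usual affine Grassmannian of $G_\sF$ (formed with parameter $z = u - \varpi$), while the special fiber embeds into $\Fl_{G^\flat, \bbf^\flat}\otimes \bar k$. Under the identification of Iwahori-Weyl groups recalled in \S\ref{recollections}, the class $\{\mu\}$ corresponds to the admissible set $\Adm_{\{\mu\}}^\bbf$ indexing the Schubert strata of $\calA(G,\{\mu\})$. Since $\calL$ is defined globally on $\Gr_\calG$ via the adjoint morphism over $\Spec(\calO_F[u])$, its restrictions to the generic and special fibers correspond to determinant line bundles of matching central charge, so Zhu's theorem yields precisely \eqref{Coherence} for all $n \geq 1$.

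The main obstacle is the matching of central charges, namely verifying that $\calL_\sF$ and $\calL_{\bar k}$ are the line bundles for which Zhu's Coherence Conjecture is formulated, with the same integer multiplier on both sides. This requires tracking how the adjoint representation $\underline{\calG}_1 \to \GL(\calV_1)$ behaves under the specialization $u \mapsto \varpi$ on the generic fiber and under $\calO_F[u] \to \bar k\pot{u}$ on the special fiber, as well as through the Weil restriction from $\calO_K$ to $\calO_F$. Much of this bookkeeping is already carried out in \cite[\S7--9]{PZ13}, \cite[\S4]{Lev16}, and \cite[\S5]{HRb}; once these compatibilities are in hand, the lemma follows.
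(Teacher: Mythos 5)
Your high-level plan matches the paper's strategy (ampleness via properties of the adjoint morphism; the dimension equality via Zhu's Coherence Conjecture), but several pieces that you label as ``bookkeeping'' are in fact the substance of the proof, and one of them requires the normality hypothesis of Theorem \ref{special_fiber_thm} that you never invoke.

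For the ampleness, your argument differs from the paper's. The paper does not argue directly that $\ad$ is finite; it reduces (via Proposition \ref{normal_prop} ii) and the decompositions \eqref{decom_schubert}, \eqref{decom_adm_locus}) to the case of absolutely simple factors and reads off ampleness from the explicit formula for the central charge of $\calL$ in \cite[Lem.~4.2]{Zhu}. Your direct approach, essentially that of \cite[Prop.~8.1]{PZ13}, is legitimate as a reference, but the reasoning you give for quasi-finiteness is imprecise: having $\ker(\Ad)=Z(\underline{\calG}_1)$ of multiplicative type does not by itself make $\ad$ quasi-finite (when $Z(G)^\circ$ is a nontrivial torus, $\Gr_{Z(G)}$ is an infinite discrete set); one needs the standard argument that the restriction to a single bounded piece of $\Gr_\calG$ factors through finitely many components, after which properness forces finiteness. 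This is a gap in your sketch, though the cited propositions patch it.

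The more serious gap is in \eqref{Coherence}. The Coherence Conjecture as proved by Zhu applies to a single absolutely simple group, not to the Weil-restricted group $G=\Res_{K/F}(G_1)$ with its reflex field $E$ possibly ramified over $F$. The paper's proof therefore must (a) use Proposition \ref{normal_prop} ii) and the normality hypotheses of Theorem \ref{special_fiber_thm} to replace $M_{G,\{\mu\},\sF}$ and $\calA(G,\{\mu\})$ by their neutral-component analogues $M^o$ and $\calA^o$ (this is where the normality assumption enters and without it the lemma is false); (b) decompose into products over the absolutely simple factors $G_{j,\ad}$; and (c) invoke the product formula \cite[Prop.~4.3.8]{Lev16} to convert the \emph{product} of dimensions over the $m_j$ copies on the generic fiber, arising from the totally ramified part of $F_j/F$, into the \emph{single} dimension over $\calA(G_{j,\ad},\{\mu_{i,1}^{(j)}\}+\cdots+\{\mu_{i,m_j}^{(j)}\})^o$ on the special fiber. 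Only after these reductions can Zhu's theorem be applied factor by factor, yielding \eqref{final_step}. Your proposal collapses all of this into ``matching of central charges.'' In particular, you never explain why the special-fiber side, which involves a \emph{sum} of cocharacter classes, has dimension matching a \emph{product} of generic-fiber Schubert varieties; this is exactly what \cite[Prop.~4.3.8]{Lev16} supplies and it is not a formal compatibility.

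A minor but real error: you cite ``the Coherence Conjecture \cite[Thm.~6.5]{Zhu14}.'' Theorem 6.5 of \cite{Zhu14} is the Frobenius-splitting statement (used later in the paper in the Cohen--Macaulayness argument); the Coherence Conjecture is the main theorem of \cite{Zhu14}.
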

\begin{proof} This lemma is a direct consequence of the Coherence Conjecture \cite{Zhu14} invoking Proposition \ref{normal_prop} ii) and the assumption on the normality of Schubert varieties, see also \cite[\S9]{PZ13} (resp.~\cite[Thm.~4.3.2]{Lev16}) for similar arguments. We recall the argument for convenience.

First note that since $M_{\{\mu\}}\to \Spec(\calO_E)$ is proper, the line bundle $\calL$ is relatively ample on $M_{\{\mu\}}$ if and only if its fibers $\calL_{F}$, $\calL_k$ are ample [$\on{EGA IV}_3$, Cor.~9.6.4] if and only if its geometric fibers $\calL_{\bar{F}}$, $\calL_{\bar{k}}$ are ample \cite[01VR (4)]{StaPro}. Hence, the statement of the lemma only depends on the geometric fibers. 

By Proposition \ref{normal_prop} ii), using the normality assumption on $M_{G,\{\mu\},\sF}$ (resp.~the maximal $(\bbf^\flat,\bbf^\flat)$-Schubert varieties inside $\calA(G,\{\mu\})$), there is an isomorphism
\begin{equation}\label{schemes_fibers_2}
M_{G,\{\mu\},\sF}\simeq M_{G,\{\mu\},\sF}^o \;\;\;\;\text{(resp.}\;\;  \calA(G,\{\mu\})\simeq \calA(G,\{\mu\})^o\;\textup{)}
\end{equation}
where, in a change of notation, the superscript $o$ denotes as in \cite[\S 2.2]{Zhu14} (see also \cite[(9.18) ff.]{PZ13}) the translated to the neutral component (Iwahori-) Schubert variety inside the affine Grassmannian (resp.~twisted affine flag variety) for the simply connected group. Note that $M_{G,\{\mu\},\sF}^o$ (resp.~$\calA(G,\{\mu\})^o$) only depends on the adjoint group $G_\ad$ and the image of $\{\mu\}$ under $G\to G_\ad$: the simply connected groups of $G$ and $G_\ad$ are the same, and the translated to the affine Weyl group admissible set depends only on the image of $\{\mu\}$ under $G\to G_\ad$, cf.~Corollary \ref{Setad}. This procedure is also compatible with the formation of $\calL$ because the morphism $\on{ad}$ factors through the adjoint group. We now study the decomposition of $M_{G,\{\mu\},\sF}^o$ (resp.~$\calA(G,\{\mu\})^o$) in terms of the simple factors of $G_\ad$.  

Let $G_\ad=\prod_{j\in J}\Res_{F_j/F}(G_{ j, \ad})$ where $J$ is a finite index set, $F_j/F$ are finite field extensions containing $K$ and $G_{j,\ad}$ are absolutely simple, tamely ramified $F_j$-groups. The geometric generic fiber becomes
\begin{equation}\label{decom_schubert}
M_{G,\{\mu\},\sF}^o\simeq \prod_{j\in J} \prod_{i=1}^{n_j} \Gr_{G_{j,\ad},\bar F}^{\leq \{\mu_i^{(j)}\}, o}\;\subset\; \Gr_{G_\scon,\bar F}
\end{equation}
where $n_j:=[F_j:F]$ and $(\{\mu^{(j)}_i\})_{i=1,\ldots,n_j}$ is the $j$-factor of $\{\mu\}$ under $G_\sF\to G_{\ad,\sF}$. Note that \eqref{decom_schubert} comes from the analogous decomposition for $\Gr_{G_\scon,\bar F}$. For $j\in J$, consider the maximal unramified subextension $F_j/F_j^{\on{ur}}/F$, and write $n_j=m_j\cdot l_j$ with $m_j:=[F_j:F_j^{\on{ur}}]$ and $l_j:=[F_j^{\on{ur}}:F]$. Also for each $j\in J$ we reorder 
\[
(\{\mu^{(j)}_i\})_{i=1,\ldots,n_j}=\big((\{\mu_{i,k}^{(j)}\})_{k=1,\ldots,m_j},\ldots,(\{\mu_{i,k}^{(j)}\})_{k=1,\ldots,m_j}\big)_{i=1,\ldots, l_j}
\]
according to $\Res_{F_j/F}(G_{j,\ad})=\Res_{F_j/F_j^{\on{nr}}}(\Res_{F_j^{\on{nr}}/F_j}(G_{j,\ad}))$. As $F_j/F_j^{\on{ur}}$ is totally ramified, we obtain in the geometric special fiber\footnote{Note that it is clear how to add conjugacy classes of $1$-parameter subgroups, by choosing dominant representatives (for any notion of dominant) and taking the conjugacy class of the sum. This is independent of all choices.}
\begin{equation}\label{decom_adm_locus}
\calA(G,\{\mu\})^o\simeq \prod_{j\in J}\prod_{i=1}^{l_j}\calA(G_{j,\ad},\{\mu_{i,1}^{(j)}\}+\ldots \{\mu_{i,m_j}^{(j)}\})^o.
\end{equation}
Again \eqref{decom_adm_locus} comes from the analogous decomposition for $\Fl_{G^\flat_{\scon},\bbf^\flat_{\scon}}$ where $\bbf^\flat_\scon\subset \scrB(G_\scon^\flat,k\rpot{u})$ denotes the facet corresponding to $\bbf^\flat$. Here one has to remember $G=\Res_{K/F}(G_1)$ and that the composite field $K\cdot F_j^{\on{ur}}$ is a subfield of $F_j/F_j^{\on{ur}}$.

 Also $\calL_{\bar{F}}$ (resp.~$\calL_{\bar{k}}$) decomposes according to \eqref{decom_schubert} (resp.~\eqref{decom_adm_locus}), so that its ampleness on $M_{G, \{\mu\},\sF}^o$ (resp.~on $\calA(G,\{\mu\})$) now follows from the explicit formula given in \cite[Lem.~4.2]{Zhu} (see also \cite[Prop.~4.3.6]{Lev16}). Note that we are using here that a line bundle is ample if and only if its restriction to the reduced locus is ample, cf.\,\cite[Prop 4.5.13]{EGA2}. For each $j\in J$, the remaining claim \eqref{Coherence} now reads
\begin{equation}\label{last_product}
\prod_{i=1}^{n_j} \dim_{\bar F} \Ga\big(\Gr_{G_{j,\ad},\bar F}^{\leq \{\mu_i^{(j)}\}, o}, \calL^{\otimes n}_{\bar F}\big)\;=\; \prod_{i=1}^{l_j}\dim_{\bar k} \Ga\big(\calA(G_{j,\ad},\{\mu_{i,1}^{(j)}\}+\ldots \{\mu_{i,m_j}^{(j)}\})^o, \calL^{\otimes n}_{\bar k}\big).
\end{equation}
By \cite[Prop.~4.3.8]{Lev16} (and the references cited there), we have the product formula
\[
\prod_{k=1}^{m_j} \dim_{\bar F} \Ga\big(\Gr_{G_{j,\ad},\bar F}^{\leq \{\mu_{i,k}^{(j)}\}, o}, \calL^{\otimes n}_{\bar F}\big)\;=\; \dim_{\bar F} \Ga\big(\Gr_{G_{j,\ad},\bar F}^{\leq \{\mu_{i,1}^{(j)}\}+\ldots+\{\mu_{i,m_j}^{(j)}\}, o}, \calL^{\otimes n}_{\bar F}\big).
\]
Thus \eqref{last_product} follows from the equality
\begin{equation}\label{final_step}
\dim_{\bar k} \Ga\big(\calA(G_{j,\ad},\{\mu_{i,1}^{(j)}\}+\ldots \{\mu_{i,m_j}^{(j)}\})^o, \calL^{\otimes n}_{\bar k}\big) =   \dim_{\bar F} \Ga\big(\Gr_{G_{j,\ad},\bar F}^{\leq \{\mu_{i,1}^{(j)}\}+\ldots+\{\mu_{i,m_j}^{(j)}\}, o}, \calL^{\otimes n}_{\bar F}\big)
\end{equation}
which follows from the main theorem of \cite{Zhu14}.
\end{proof}

Lemma \ref{line_bundle} is enough to conclude the proof of Theorem \ref{special_fiber_thm} in this case, as follows. As in \cite[\S9]{PZ13}, by the local constancy of the Euler characteristic \cite[Thm.~7.9.4]{EGA32} and Serre's cohomology vanishing theorem \cite[Thm.~2.2.1]{EGA31},
we have for $n>\!\!>0$,
\[
\dim_{\bar{F}} \Ga(M_{\{\mu\},\sF},\calL_\sF^{\otimes n})=\dim_{\bar k} \Ga(\overline{M}_{\{\mu\},\bar{k}},\calL_{\bar{k}}^{\otimes n})\;\geq\; \dim_{\bar k}\Ga(\calA(G,\{\mu\})_{\bar{k}},\calL_{\bar{k}}^{\otimes n}),
\]
and thus equality. As $\calL_{\bar{k}}$ is ample, this implies $\overline{M}_{\{\mu\},\bar{k}}=\calA(G,\{\mu\})$, and finishes the proof of Theorem \ref{special_fiber_thm} i) in this case.

\begin{rmk}\label{extension_general_rmk}
Let $K/F$ be any finite field extension, not necessarily totally ramified. 
We explain how the preceding discussion extends to this more general case.
Denote by $K_0/F$ the maximal unramified subextension of $K/F$ with residue field $k_0/k$. 
We now have $Q\in \calO_{K_0}[u]$ for the Eisenstein polynomial of $\varpi$. We have the parahoric $\calO_K$-group scheme $\calG_1 = \calG_{\bf f_1}$ as above, and we define now the parahoric $\calO_{K_0}$-group scheme $\calG := \Res_{\calO_{K}/\calO_{K_0}}(\calG_1)$ and its associated positive loop group $L^+_0 \calG$ over $\calO_{K_0}$.
As in \cite[$\S4.3$]{HRb}, the group scheme $\uG_1$ in \eqref{spread} (resp.~$\ucG_1$) is now defined over $\calO_{K_0}[u^\pm]$ (resp.~$\calO_{K_0}[u]$).
Hence, the Beilinson-Drinfeld Grassmannian in \eqref{BD_Grass_dfn} is defined over $\calO_{K_0}$ as well.
This ind-projective ind-scheme is denoted by $\Gr_{\calG,0}\to \Spec(\calO_{K_0})$.
As in \cite[\S4.1]{Lev16} we define the $\calO_F$-ind-scheme
\[
\Gr_{\calG}\defined \Res_{\calO_{K_0}/\calO_F}(\Gr_{\calG, 0}),
\]
which is ind-projective as well, and carries a natural left action of the positive loop group $L^+\calG := \Res_{\calO_{K_0}/\calO_F}(L^+_0 \calG)$.
Again its generic fiber is isomorphic to the usual affine Grassmannian $\Gr_G$, equivariantly for the action of $(L^+\calG)_\eta = L^+_zG$, where $z = u - \varpi$.
Its special fiber is isomorphic to $\Fl_{G^\flat,\bbf^\flat}$ where now
\[
G^\flat:=\Res_{k_0\rpot{u}/k\rpot{u}}\big(\underline{G}_1\otimes_{\calO_{K_0}[u^\pm]} k_0\rpot{u}\big), 
\]
and $\bbf^\flat\subset\scrB(G^\flat,k\rpot{u})$ corresponds to $\bbf_{1}$ under $\scrB(G^\flat,k\rpot{u})=\scrB(\uG_{1},k_0\rpot{u})=\scrB(G_1,K)$. 

For a geometric conjugacy class $\{\mu\}$ in $G$, the local model $M_{\{\mu\}}=M(\uG_1,\calG_\bbf,\{\mu\},\varpi)$ is the scheme theoretic closure of $\Gr_{G}^{\leq\{\mu\}}\subset \Gr_{\calG}\otimes_{\calO_F}\calO_E$. Let $E_0$ denote the compositum of $K_0$ with the reflex field $E/F$ of $\{\mu\}$. Then according to $G=\Res_{K_0/F}(\Res_{K/K_0}(G_1))$ the conjugacy class $\{\mu\}$ decomposes as a tuple of $\Res_{K/K_0}(G_1)$-conjugacy classes $\{\mu_j\}$, $1\leq j\leq [K_0:F]$ each having reflex field $E_0/K_0$.
Since $\calO_{K_0}/\calO_F$ is \'etale, we have as $\calO_{K_0}$-ind-schemes
\[
\Gr_\calG\otimes_{\calO_F}\calO_{K_0}\;\simeq\;\prod_{1\leq j\leq [K_0:F]}\Gr_{\calG,0},
\]
so that $M_{\{\mu\}}\otimes_{\calO_E}\calO_{E_0}\simeq \prod_j M_{\{\mu_j\}}$ where each $M_{\{\mu_j\}}$ is a local model for $\Res_{K/K_0}(G_1)$.
Further, on each $\Gr_{\calG,0}$ we have the line bundle $\calL_{0}$ constructed as in \eqref{special_fibers} which induces a line bundle $\calL$ on $\Gr_\calG$ by \'etale descent. 
Now the analogue of Lemma \ref{line_bundle} for the pair $(M_{\{\mu\}},\calL)$ is immediate from the product decomposition $M_{\{\mu\}}\otimes_{\calO_E}\calO_{E_0}\simeq \prod_j M_{\{\mu_j\}}$ and the validity of Lemma \ref{line_bundle} for each pair $(M_{\{\mu_j\}},\calL_{0})$.
\end{rmk}

\subsection{Equal characteristic local models}
We now treat the case where $F\simeq k\rpot{t}$ is of equal characteristic. In this situation, we assume that in the simply connected group $G_\scon\simeq \prod_{j\in J}\Res_{F_j/F}(G_j)$ each absolutely almost simple factor $G_j$ splits over a tamely ramified extension of $F_j$. We also fix a uniformizer $t\in \calO_F$ so that $\calO_F=k\pot{t}$ . Let $\calG=\calG_\bbf$ denote the parahoric $k\pot{t}$-group scheme.

\subsubsection{Recollections} Similarly to \eqref{BD_Grass_dfn} the Beilinson-Drinfeld affine Grassmannian $\Gr_\calG$ is the functor on the category of $k\pot{t}$-algebras $R$ given by the isomorphism classes of tuples $(\calF,\al)$ with
\begin{equation}\label{BD_Grass_dfn_equal}
\begin{cases}
\text{$\calF$ a $\calG\otimes_{k\pot{t}}R\pot{z-t}$-torsor on $\Spec(R\pot{z-t})$};\\
\text{$\al\co \calF|_{\Spec(R\rpot{z-t})}\simeq \calF^0|_{\Spec(R\rpot{z-t})}$ a trivialization},
\end{cases}
\end{equation}
where $\calF^0$ denotes the trivial torsor. Here $z$ is an additional formal variable, and the map $k\pot{t}\to R\pot{z-t}$ is the unique $k$-algebra map with the property $t\mapsto z$. By \cite[\S0.3]{Ri19} the functor $\Gr_\calG$ agrees with \cite[Def.~3.3]{Ri16} defined using a spreading of $\calG$ over some curve, and therefore is representable by an ind-projective ind-scheme over $k\pot{t}$ by \cite[Thm.~2.19]{Ri16}. The generic fiber $\Gr_{\calG, F}$ is canonically the affine Grassmannian associated with the reductive group scheme $\calG\otimes_{k\pot{t},t\mapsto z}F\pot{z-t}\simeq G\otimes_FF\pot{z-t}$ (cf.~\cite[Lem.~0.2]{Ri19}), and thus is equivariantly (for the left action of the positive loop group) isomorphic to the usual affine Grassmannian $\Gr_G$ over $F$ formed using the parameter $z-t\in F\pot{z-t}$. Its special fiber is canonically the twisted affine flag variety $\Fl_{G,\bbf}$ for $\calG=\calG_\bbf$ over $\calO_F=k\pot{t}$ in the sense of \cite{PR08}.

Recall we fixed a conjugacy class $\{\mu\}$ of geometric cocharacters in $G$ with reflex field $E/F$. This defines a closed subscheme $\Gr_{G}^{\leq \{\mu\}}\subset \Gr_{G}\otimes_F E$ inside the affine Grassmannian which is a (geometrically irreducible) projective $E$-variety. As in mixed characteristic above (see \cite{Zhu14, Ri16}), the {\em local model \textup{(}or global Schubert variety\textup{)}} $M_{\{\mu\}}=M(G,\calG_\bbf,\{\mu\},t)$ is the scheme theoretic closure of the locally closed subscheme 
\[
\Gr_{G}^{\leq\{\mu\}}\,\hookto\, \Gr_{G}\otimes_F E\,\hookto\, \Gr_{\calG}\otimes_{\calO_F}\calO_E.
\]
By definition, the local model is a reduced flat projective $\calO_E$-scheme, and equipped with an embedding of its special fiber
\[
\overline{M}_{\{\mu\}}:=M_{\{\mu\}}\otimes_{\calO_E}k_E\;\hookto\; \Fl_{G,\bbf}\otimes_kk_E.
\]
Likewise, the admissible locus $\calA(G,\{\mu\})$ is defined as the union of the $(\bbf,\bbf)$-Schubert varieties $S_w\subset \Fl_{G,\bbf}\otimes_k\bar{k}$ where $w$ runs through the elements of the admissible set $\Adm_{\{\mu\}}^\bbf\subset W_\bbf\backslash W/W_\bbf$ inside the double classes in the Iwahori-Weyl group.

\subsubsection{Proof of Theorem \ref{special_fiber_thm} i\textup{)} in equal characteristic} As in the mixed characteristic situation, the inclusion $\calA(G,\{\mu\})\subseteq \overline{M}_{\{\mu\}}\otimes_{k_E}\bar{k}$ is proven as part of \cite[Thm.~5.14]{HRb}. Let $\calV=\Lie \,\calG$ denote the Lie algebra which is a free $\calO_F$-module of rank $\dim_F(G)$. The adjoint representation $\calG\to \GL(\calV)$ induces by functoriality a morphism of ind-projective $\calO_F$-ind-schemes
\[
\ad\co \Gr_{\calG}\;\to\; \Gr_{\GL(\calV)},
\]
where the target is defined as in \eqref{BD_Grass_dfn_equal} using the $\calO_F$-group scheme $\GL(\calV)$. 
Also we let $\calL_{\det}$ be the determinant line bundle on the target, and denote by $\calL:=\ad^*(\calL_{\det})$ its pullback.
Let $\calL_{\sF}$ (resp.~$\calL_{\bar{k}}$) denote the restriction of $\calL$ to the geometric generic (resp.~geometric special) fiber $\Gr_{\calG,\sF}=\Gr_{G,\sF}$ (resp.~$\Gr_{\calG,\bar{k}}=\Fl_{G,\bbf}\otimes_k \bar{k}$). The rest of the argument is the same as in mixed characteristic above using the following lemma.

\begin{lem} \label{line_bundle_equal}
The pullback of  the line bundle $\calL$ to $M_{\{\mu\}}$ is relatively ample over $\calO_E$, and for all $n\geq 1$ one has
\[
\dim_{\bar{F}} \Ga\big(M_{\{\mu\},\sF},\calL_\sF^{\otimes n}\big)\;=\; \dim_{\bar{k}}\Ga\big(\calA(G,\{\mu\}),\calL_{\bar{k}}^{\otimes n}\big).
\]
\end{lem}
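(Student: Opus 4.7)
\emph{Approach.} The plan is to mirror the argument of Lemma \ref{line_bundle}, which is actually cleaner here because we are purely in equal characteristic: there is no Eisenstein polynomial, no spreading-out to $\calO_F[u]$, and the identification $(G^\flat,\bbf^\flat)=(G,\bbf)$ is tautological. Both assertions --- relative ampleness of $\calL$ and the equality of spaces of global sections on the geometric fibers --- will be reduced to a product of statements indexed by the absolutely simple tame factors of $G_\scon$, and then to Zhu's Coherence Conjecture \cite[Thm.~6.5]{Zhu14} applied factor by factor. First, by properness of $M_{\{\mu\}}\to\Spec(\calO_E)$ and standard ampleness criteria ([$\on{EGA IV}_3$, Cor.~9.6.4] together with \cite[01VR]{StaPro}), relative ampleness of $\calL$ reduces to ampleness of $\calL_{\sF}$ and $\calL_{\bar{k}}$; the dimension identity is also fiberwise.

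\emph{Reduction to the simply connected cover.} Under the normality assumption on $\Gr^{\leq\{\mu\}}_{G,\sF}$ and on the maximal $(\bbf,\bbf)$-Schubert varieties inside $\calA(G,\{\mu\})$, Proposition \ref{normal_prop} ii) produces identifications
\[
M_{G,\{\mu\},\sF}\simeq M^o_{G,\{\mu\},\sF},\qquad \calA(G,\{\mu\})\simeq\calA(G,\{\mu\})^o
\]
with translated-to-neutral-component versions inside the affine Grassmannian (respectively, twisted affine flag variety) of $G_\scon$. These depend only on $G_\ad$ and the image of $\{\mu\}$ there, and since $\ad$ factors through $G_\ad$, the line bundle $\calL$ pulls back compatibly.

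\emph{Decomposition and Coherence Conjecture.} Write $G_\ad=\prod_{j\in J}\Res_{F_j/F}(G_{j,\ad})$ with each $G_{j,\ad}$ absolutely simple and tamely ramified over $F_j$, and let $F_j^{\on{nr}}/F$ denote the maximal unramified subextension, with $l_j=[F_j^{\on{nr}}:F]$ and $m_j=[F_j:F_j^{\on{nr}}]$. On the generic side, the splitting $F_j\otimes_F\sF\simeq\prod\sF$ decomposes $M^o_{G,\{\mu\},\sF}$ into a product of $l_jm_j$ Schubert varieties $\Gr^{\leq\{\mu^{(j)}_{i,k}\},o}_{G_{j,\ad},\sF}$. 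On the special side, the Restriction-of-Scalars identity $\Fl_{\Res_{F_j/F}(G_{j,\ad}),\bbf_j}=\Fl_{G_{j,\ad},\bbf_j}$ from \S\ref{Absolutely_Simple_Red} collapses $\calA(G,\{\mu\})^o$ into a product over $j$ and $i=1,\ldots,l_j$ of admissible loci $\calA\bigl(G_{j,\ad},\sum_{k=1}^{m_j}\{\mu^{(j)}_{i,k}\}\bigr)^o$, summing the cocharacters across the totally ramified direction. Ampleness of $\calL$ on each factor follows from the explicit formula in \cite[Lem.~4.2]{Zhu} (combined with \cite[Prop.~4.5.13]{EGA2} to reduce to the reduced locus). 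The equality of global sections now reduces, via the product formula \cite[Prop.~4.3.8]{Lev16} which combines the $m_j$ generic factors into a single Schubert variety with summed cocharacter, to the main theorem of \cite{Zhu14}.

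\emph{Main obstacle.} The key input is the Coherence Conjecture, whose use forces the passage to the simply connected cover via Proposition \ref{normal_prop} ii) --- this is exactly why the normality hypothesis is essential. Beyond that, the delicate point is bookkeeping: the generic fiber decomposes into $l_jm_j$ unsummed factors per $j$, while the special fiber only has $l_j$ factors, each carrying the sum of $m_j$ cocharacters. This is the mirror of the mixed-characteristic manipulation already carried out in Lemma \ref{line_bundle}, so I expect no new difficulties beyond transcribing the analogous calculations in the equal-characteristic setting.
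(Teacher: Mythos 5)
Your proof is correct and takes exactly the approach the paper intends: the paper's own proof of Lemma \ref{line_bundle_equal} is a one-line remark that the argument ``proceeds in the same steps as in Lemma \ref{line_bundle},'' and you have simply spelled those steps out in the equal-characteristic setting (reduction to the simply connected cover via Proposition \ref{normal_prop} ii), decomposition of $G_\ad$ into absolutely simple tame factors, the $l_jm_j$ versus $l_j$ bookkeeping, ampleness from \cite[Lem.~4.2]{Zhu}, and the Coherence Conjecture). The only additional observation the paper makes, and which you implicitly use, is that the hypothesis on $G$ is milder here --- only $G_\scon$ need be a product of Weil restrictions of tame groups --- which causes no issue since your decomposition is carried out on $G_\ad$ anyway.
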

\begin{proof} The proof relies on Proposition \ref{normal_prop} and the Coherence Conjecture \cite{Zhu14}, and proceeds in the same steps as in Lemma \ref{line_bundle}. The restriction on the group $G$ is a little milder in equal characteristic due the existence of local models for any, possibly wildly ramified, reductive group.
\end{proof}

\subsection{Proof of Corollary \ref{Invariance_Cor}}\label{Invariance_Cor_Sec}
We treat mixed and equal characteristic by the same argument, so that now the local field $F$ is either a finite extension of $\bbQ_p$ or isomorphic to $k_F\rpot{t}$. Let $(G',\{\mu'\},\calG_{\bbf'})\to (G,\{\mu\},\calG_\bbf)$ be as in Corollary \ref{Invariance_Cor} where the reductive groups $G'\to G$ are defined over $F$ and induce an isomorphism $G_\ad'\simeq G_\ad$ on adjoint groups. As in \cite[Prop.~2.2.2]{KP18}, the map $G'\to G$ induces a map of $\calO_{E'}$-schemes on local models
\begin{equation}\label{adjoint_Loc_Mod}
M_{(G',\{\mu'\},\calG_{\bbf'})}\;\longto\; M_{(G,\{\mu\},\calG_{\bbf})}\otimes_{\calO_{E}}\calO_{E'},
\end{equation}
where $E'/F$ (resp.~$E/F$) denotes the reflex field of $\{\mu'\}$ (resp.~$\{\mu\}$). Note that $E\subset E'$ is naturally a subfield. 

Now on geometric generic fibers \eqref{adjoint_Loc_Mod} is the canonical map $\Gr_{G',\bar{F}}^{\leq \{{\mu'}\}}\to \Gr_{G,\bar{F}}^{\leq \{\mu\}}$ on Schubert varieties which is finite and birational by Proposition \ref{Schubertadprop}. In particular, \eqref{adjoint_Loc_Mod} is birational. To show that \eqref{adjoint_Loc_Mod} is finite, we observe that this map is proper (because source and target are proper), and hence it is enough, by \cite[0A4X]{StaPro}, to show that \eqref{adjoint_Loc_Mod} is quasi-finite. As we already know that \eqref{adjoint_Loc_Mod} is (quasi-)finite in generic fibers, it remains to show that it is quasi-finite on (reduced geometric) special fibers. By \cite[Thm.~5.14]{HRb}, the map \eqref{adjoint_Loc_Mod} identifies on reduced geometric special fibers with the canonical map $\calA(G',\{\mu'\})\to \calA(G,\{\mu\})$. 
Applying Proposition \ref{Schubertadprop} again, we see that the latter map is finite. 
This shows that \eqref{adjoint_Loc_Mod} is birational and finite.

For universal homeomorphism, we have to show that map \eqref{adjoint_Loc_Mod} is integral, universally injective and surjective, cf.~\cite[04DC]{StaPro}. 
Being finite this map is integral.
Universally injective and surjective can be checked on geometric points over the fibers of $\Spec(\calO_{E'})$ where it again follows from Proposition \ref{Schubertadprop}.

Now assume the normality of $\Gr_{G,\bar{F}}^{\leq \{\mu\}}$ and all maximal $(\bbf^\flat,\bbf^\flat)$-Schubert varieties inside $\calA(G,\{\mu\})$. Then Theorem \ref{special_fiber_thm} i) applies to show that the special $M_{(G,\{\mu\},\calG_{\bbf}),k_{E'}}$ is reduced (because it is geometrically reduced). Since its generic fiber is normal, we can apply \cite[Prop.~9.2]{PZ13} to prove the normality of $M_{(G,\{\mu\},\calG_{\bbf})}\otimes_{\calO_{E}}\calO_{E'}$. As the map \eqref{adjoint_Loc_Mod} is finite and birational, it must be an isomorphism.


\section{Cohen-Macaulayness of local models}

\subsection{Recollections on Frobenius splittings}
Let $X$ be a scheme in characteristic $p>0$, and denote by $F=F_X\co X\to X$ its absolute $p$-th power Frobenius. The scheme $X$ is called {\em Frobenius split} if the map $\calO_X\to F_*\calO_X$ splits as a map of $\calO_X$-modules. In this case a splitting map $\varphi\co F_*\calO_X\to \calO_X$ is called a Frobenius splitting. Also recall the following notions.

\begin{dfn} \label{splitting} Let $X$ be Frobenius split. 
\begin{enumerate}
\item[i)] We say $X$ splits {\em compatibly} with some closed subscheme $Z=V(I)\subset X$ if there exists a splitting $\varphi\co F_*\calO_X\to \calO_X$ such that $\varphi(F_*I)\subset I$.
\item[ii)] We say $X$ splits {\em relative} to some effective Cartier divisor $D\subset X$ if the composition $\calO_X\to F_*\calO_X\hookto F_*\calO_X(D)$ splits as a map of $\calO_X$-modules.
\end{enumerate}
\end{dfn}

\begin{lem}\label{nice_lem}
Let $X$ be Frobenius split, and let $D\subset X$ be an effective Cartier divisor. If $X$ splits compatibly with $D$, then $X$ splits relative to $(p-1)\cdot D$. In this case, $X$ splits relative to $D$.
\end{lem}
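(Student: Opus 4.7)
The plan is to build a splitting $\psi\co F_*\calO_X((p-1)D)\to \calO_X$ via the projection formula for the absolute Frobenius, and then to deduce the statement relative to $D$ as a trivial corollary. The second assertion will be immediate: the inclusion of divisors $D\leq (p-1)D$ induces an inclusion $F_*\calO_X(D)\hookto F_*\calO_X((p-1)D)$, and composing any splitting of $\calO_X\to F_*\calO_X((p-1)D)$ with this inclusion produces a splitting of $\calO_X\to F_*\calO_X(D)$.

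For the main assertion, I would invoke the projection formula for the absolute Frobenius $F\co X\to X$. Since $F^*\calO_X(D)\cong \calO_X(pD)$, one obtains a canonical isomorphism
\[
F_*\calO_X((p-1)D)\;\cong\; F_*\bigl(\calO_X(-D)\otimes F^*\calO_X(D)\bigr)\;\cong\; F_*\calO_X(-D)\otimes \calO_X(D).
\]
If $\varphi\co F_*\calO_X\to \calO_X$ is a splitting compatible with $D$, then by definition $\varphi(F_*\calI_D)\subset \calI_D$, so $\varphi$ restricts to $\varphi_{-D}\co F_*\calO_X(-D)\to \calO_X(-D)$. I would then define
\[
\psi\co F_*\calO_X((p-1)D)\,\cong\, F_*\calO_X(-D)\otimes \calO_X(D)\,\xrightarrow{\varphi_{-D}\otimes \id}\,\calO_X(-D)\otimes \calO_X(D)\,\cong\, \calO_X.
\]

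The remaining task is to verify that $\psi$ splits the composition $\calO_X\to F_*\calO_X\hookto F_*\calO_X((p-1)D)$. This can be checked locally: on an affine open where $D=V(f)$, the image of $a\in R$ is $a^p\in f^{-(p-1)}R$, which via the projection isomorphism corresponds to $(af)^p\otimes (1/f)\in F_*(fR)\otimes f^{-1}R$. Applying $\varphi_{-D}\otimes \id$ and using the $\calO_X$-linearity of $\varphi$ (together with $\varphi(1)=1$) gives $\varphi((af)^p)=af\cdot\varphi(1)=af$; the multiplication $\calO_X(-D)\otimes \calO_X(D)\to \calO_X$ then yields $af\cdot(1/f)=a$, as required. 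I do not expect any real obstacle: the compatibility hypothesis is used exclusively to make the restriction $\varphi_{-D}$ well-defined, and the rest is a direct computation using the projection formula together with the splitting property of $\varphi$.
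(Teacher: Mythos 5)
Your proof is correct and takes essentially the same route as the paper: both restrict the compatible splitting $\varphi$ to $\varphi_{-D}\co F_*\calO_X(-D)\to\calO_X(-D)$, tensor with $\calO_X(D)$ via the projection-formula identification $F_*\calO_X((p-1)D)\cong F_*\calO_X(-D)\otimes\calO_X(D)$, and obtain the second assertion from the inclusion $F_*\calO_X(D)\hookto F_*\calO_X((p-1)D)$. You merely spell out the local verification that the paper leaves implicit.
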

\begin{proof}
Let $\varphi\co F_*\calO_X\to \calO_X$ be a splitting compatible with $D=V(\calO_X(-D))$, i.e., $\varphi$ restricts to a splitting $F_*\calO_X(-D)\to \calO_X(-D)$. By tensoring with $\calO_X(D)$ we obtain a splitting 
\[
F_*\calO_X\big((p-1)\cdot D\big)=F_*\big(\calO_X(-D)\otimes_{\calO_X} F^*\calO_X(D)\big) =F_*\calO_X(-D)\otimes_{\calO_X}\calO_X(D)\to \calO_X,
\]
i.e., $X$ splits relative to $(p-1)\cdot D$. The last assertion is immediate from the factorization $\calO_X\to F_*\calO_X(D)\hookto F_*\calO_X((p-1)\cdot D)$.
\end{proof}

\begin{rmk} If $X$ is a smooth variety over an algebraically closed field, then the converse to Lemma \ref{nice_lem} holds. Namely, $X$ splits compatibly with $D$ if and only if $X$ splits relative to $(p-1)\cdot D$. This is stated in \cite[Thm.\,1.4.10]{BK05}, but we do not need this sharper result.
\end{rmk}


\begin{lem} \label{split_lem}
Let $X$ be Frobenius split compatibly with closed subschemes $Z_1, Z_2\subset X$. Then each $Z_i$, $i=1,2$ is Frobenius split compatibly with $Z_1\cap Z_2$.
\end{lem}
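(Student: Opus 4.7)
The plan is to carry out two routine ideal-sheaf manipulations. Write $I_i \subset \calO_X$ for the ideal sheaf of $Z_i$, and let $\varphi \colon F_*\calO_X \to \calO_X$ be a splitting satisfying $\varphi(F_*I_i)\subset I_i$ for $i=1,2$.

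First, I would establish the general fact that any compatible Frobenius splitting of $X$ along a closed subscheme $Z=V(I)$ descends to a Frobenius splitting of $Z$ itself. Indeed, the hypothesis $\varphi(F_*I)\subset I$ means $\varphi$ respects the short exact sequence $0\to F_*I\to F_*\calO_X\to F_*\calO_Z\to 0$, so it induces an $\calO_X$-linear (hence $\calO_Z$-linear) map $\bar\varphi\colon F_*\calO_Z\to \calO_Z$; reducing the splitting identity $\varphi \circ (\calO_X \to F_*\calO_X)=\id_{\calO_X}$ modulo $I$ shows that $\bar\varphi$ splits the Frobenius of $Z$. Applying this with $Z=Z_1$ (resp.\,$Z_2$) produces the required Frobenius splittings $\bar\varphi_i$ of $Z_i$.

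Next I would verify that $\bar\varphi_1$ is compatible with the closed subscheme $Z_1\cap Z_2 \subset Z_1$, the argument for $\bar\varphi_2$ being symmetric. The scheme-theoretic intersection is $Z_1\cap Z_2=V(I_1+I_2)$, which inside $Z_1=V(I_1)$ corresponds to the ideal $J:=(I_1+I_2)/I_1\subset \calO_X/I_1=\calO_{Z_1}$, and this is precisely the image of $I_2$ under the projection $\calO_X\twoheadrightarrow\calO_{Z_1}$. From $\varphi(F_*I_2)\subset I_2$, reducing modulo $I_1$ yields $\bar\varphi_1(F_*J)\subset J$, which is the required compatibility in the sense of Definition \ref{splitting} i).

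I do not anticipate a substantive obstacle: the lemma is essentially a formal consequence of Definition \ref{splitting} i) together with the description of the scheme-theoretic intersection as $V(I_1+I_2)$. The only bookkeeping point is that $F_*$ is exact (Frobenius being an affine morphism which is the identity on underlying topological spaces), so that sub-ideal sheaves of $F_*\calO_X$ are identified with the pushforwards of the corresponding sub-ideal sheaves of $\calO_X$, and operations such as passing to quotients or to sums commute with $F_*$.
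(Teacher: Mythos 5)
Your proof is correct and follows essentially the same route as the paper: descend the compatible splitting $\varphi$ to each $Z_i$, and observe that compatibility with $I_1$ and $I_2$ yields compatibility with $I_1+I_2$, hence with $Z_1\cap Z_2=V(I_1+I_2)$. The paper states the key point more tersely ("it is elementary to see that $\varphi(F_*(I_1+I_2))\subset I_1+I_2$"), but your reduction-modulo-$I_1$ phrasing is just a spelled-out version of the same bookkeeping.
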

\begin{proof} For $i=1,2$, let $Z_i=V(I_i)$ and let $\varphi$ be a splitting with $\varphi(F_*I_i)\subset I_i$. This automatically induces splittings on each $Z_i$, and it is elementary to see that $\varphi(F_*(I_1+I_2))\subset I_1+I_2$. Since $V(I_1+I_2)=Z_1\cap Z_2$, the lemma follows. 
\end{proof}

\begin{prop} \label{CM_split}
Let $X$ be Frobenius split and locally of finite type over a field \textup{(}or a Dedekind domain\textup{)}, and let $D\subset X$ be an effective Cartier divisor. If $X$ splits relative to $D$ and $X\backslash D$ is Cohen-Macaulay, then $X$ is Cohen-Macaulay. 
\end{prop}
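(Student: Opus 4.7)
The plan is to use Grothendieck duality to translate the relative Frobenius splitting into a direct summand statement about the dualizing complex $\om_X^\bullet$, and then show that the hypotheses force all cohomology sheaves of $\om_X^\bullet$ except the top one to vanish.

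First, I would apply $R\Hom_{\calO_X}(-,\om_X^\bullet)$ to the split surjection $F_*\calO_X(D)\onto \calO_X$ coming from the given relative splitting. Grothendieck duality for the finite absolute Frobenius morphism $F\co X\to X$, together with the identification $F^!\om_X^\bullet=\om_X^\bullet$ (which holds because the absolute Frobenius commutes with the structure morphism to the perfect base), produces a split injection
\[
\om_X^\bullet\;\hookto\; F_*\bigl(\om_X^\bullet\otimes\calO_X(-D)\bigr)
\]
in the derived category. Iterating this construction, using the projection formula to combine $F^e_*$ and the twists, gives split injections $\om_X^\bullet\hookto F^e_*(\om_X^\bullet\otimes\calO_X(-D_e))$ for every $e\geq 1$, where $D_e:=(1+p+\ldots+p^{e-1})D$.

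Taking cohomology sheaves is exact on both $F^e_*$ and the twist by the invertible $\calO_X(-D_e)$, so each $\calG_i:=\mathcal{H}^{-i}(\om_X^\bullet)$ is a direct summand of $F^e_*(\calG_i\otimes\calO_X(-D_e))$. Because $X\setminus D$ is Cohen-Macaulay, $\om_X^\bullet|_{X\setminus D}$ is concentrated in a single degree, so $\calG_i|_{X\setminus D}=0$ for $i\neq d:=\dim X$. Hence each such $\calG_i$ is a coherent sheaf supported on $D$, annihilated by some power $I_D^{n_i}$ of the ideal of $D$.

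The decisive observation is that the splitting map $F^e_*(\calG_i\otimes\calO_X(-D_e))\to\calG_i$ is dual to the inclusion $\calO_X\hookto F^e_*\calO_X(D_e)$, and hence factors as the $F^e_*$-pushforward of the multiplication $\calG_i\otimes\calO_X(-D_e)\to\calG_i$ followed by the Frobenius trace $F^e_*\calG_i\to\calG_i$. Locally this multiplication is just multiplication by $f^{D_e}$ on $\calG_i$, where $f$ is a local equation of $D$. Choosing $e$ large enough that $D_e\geq n_i$ componentwise, one has $f^{D_e}\in I_D^{n_i}$, so this map is identically zero and therefore the entire splitting map is zero. A split injection with zero splitting forces its source to vanish, so $\calG_i=0$ for all $i\neq d$. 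Thus $\om_X^\bullet\simeq\calG_d[d]$ is a shifted sheaf, which is equivalent to $X$ being Cohen-Macaulay. The main technical obstacle is setting up Grothendieck duality for the absolute Frobenius with the identification $F^!\om_X^\bullet=\om_X^\bullet$ in the generality stated (including bases that need not be perfect fields), and tracking the iteration of the splitting through the projection formula; once this is done the final vanishing step is essentially formal.
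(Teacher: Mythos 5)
Your proof is correct, and it is the Grothendieck/Matlis dual of the paper's argument. You show that the cohomology sheaves $\calG_i=\calH^{-i}(\om_X^\bullet)$ vanish for $i\neq d$; the paper reduces to a Noetherian local ring $(R,\frakm)$ with $D=V(f)$ and proves the equivalent dual statement $H^i_\frakm(R)=0$ for $i<d=\dim R$. The two pillars of the argument coincide. First, one iterates the relative splitting to get, for every $e\geq 1$, a splitting twisted by $f^{q_e}$ with $q_e=1+p+\cdots+p^{e-1}$: the paper invokes \cite[Lem.\,5.2.3]{BS13}, and your construction via Grothendieck duality for $F^e$ together with the projection formula is precisely its dual, producing the split monomorphism $\om_X^\bullet\hookto F^e_*(\om_X^\bullet\otimes\calO_X(-q_e D))$. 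Second, the Cohen--Macaulayness of $X\setminus D$ forces the obstruction to be $f$-power torsion: your observation that each $\calG_i$ with $i\neq d$ is coherent and vanishes on $X\setminus D$ is exactly the content of Lemma \ref{torsion_local_coho}, which the paper establishes for $\calH^{-i}(\om_R^\bullet)$ and then transfers to $H^i_\frakm(R)$ by Matlis duality. Working on the dualizing-complex side makes the torsion step more transparent (it is immediate from coherence and Noetherianity), at the price of needing $F^!\om_X^\bullet\simeq\om_X^\bullet$ and Grothendieck duality for the absolute Frobenius --- the point you rightly flag. This is legitimate whenever $X$ is $F$-finite, so that $F$ is a finite morphism and a homeomorphism on underlying spaces; uniqueness of normalized dualizing complexes then yields the required identification. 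This covers all applications in the paper. The paper's local-cohomology formulation sidesteps $F^!$ entirely and uses only off-the-shelf local duality. One small cleanup on your side: Cohen--Macaulayness and the degree of concentration of $\om_X^\bullet$ are local notions, so you should first localize at a point of $D$, which also eliminates the need to choose the bounds $n_i$ and the exponent $e$ uniformly across $X$.
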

\begin{proof} This is \cite[Ex.~5.4]{BS13}, and the following proof was communicated to us by K.~Schwede\footnote{Of course, any insufficiencies in the presentation are entirely due to the authors.}.
We have to show that all local rings $\calO_{X,x}$ for $x\in D$ are Cohen-Macaulay. Without loss of generality we may assume that $X=\Spec(R)$ where $(R,\frakm)$ is a Noetherian local ring and $D=V(f)$ for some non-zero divisor $f\in \frakm$. By \cite[0AVZ]{StaPro}, we have to show that the local cohomology $H^i_\frakm(R)$ vanishes for $i=0,\ldots,d-1$, $d:=\dim(R)$. Our finiteness assumptions on $X$ imply that $R$ admits a dualizing complex (cf.~ \cite[0BFR]{StaPro}) so that Lemma \ref{torsion_local_coho} below applies. Hence, there exists an $N>\!\!>0$ with $f^N\cdot H^i_{\frakm}(R)=0$ for all $i=0,\ldots, d-1$. By \cite[Lem.~5.2.3]{BS13}, there exists for any $e\in \bbZ_{\geq 1}$ a splitting of the composition
\begin{equation}\label{higher_split}
R\;\longto\; F_*^eR\;\overset{F_*^e\!f^{q_e}}{\longto} F_*^eR,
\end{equation}
where $q_e:=1+p+\ldots+p^{e-1}$, i.e., $\Spec(R)$ is $F_*^e$-split relative to $q_e\cdot V(f)$. Now choose $e>\!\!>0$ such that $q_e\geq N$, i.e., $f^{q_e}$ kills the local cohomologies as above. Finally, consider the sequence of $R$-modules 
\[
H^i_\frakm(R)\;\longto\; H^i_\frakm(F_*^eR)\;\overset{F_*^e\!f^{q_e}\,}{\longto}\; H^i_\frakm(F_*^eR).
\]
Using $H^i_\frakm(F_*^eR)=F_*^eH^i_\frakm(R)$ ($F_*$ is exact) we see that $F_*^e\!f^{q_e}$ induces the zero map on $H^i_\frakm(F_*^eR)$ for all $i=0,\ldots,d-1$. By virtue of the splitting \eqref{higher_split} this means that the identity map on $H^i_\frakm(R)$ factors through $0$, or equivalently $H^i_\frakm(R)=0$. The lemma follows.
\end{proof}

\begin{lem}\label{torsion_local_coho}
Let $(R,\frakm,k)$ be a Noetherian local ring of dimension $d$ which admits a dualizing complex in the sense of \cite[0A7B]{StaPro}. Let $f\in R$ be a non-zero divisor such that the localization $R_f$ is Cohen-Macaulay. For any finite $R$-module $M$ whose localization $M_f$ is a projective $R_f$-module, there exists an integer $N>\!\!>0$ such that the local cohomology vanishes
\[
f^N\cdot H^i_{\frakm}(M) \;=\; 0
\]
for all $i=0,\ldots, d-1$.
\end{lem}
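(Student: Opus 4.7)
The plan is to use local duality to translate the desired annihilation into a vanishing of certain $\Ext$-modules after inverting $f$, and then to exploit the Cohen-Macaulay and projectivity hypotheses to achieve this vanishing.

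By local duality with the normalized dualizing complex $\om_R^\bullet$, one has $H^i_{\frakm}(M)^\vee \cong E_i := \Ext^{-i}_R(M, \om_R^\bullet)$, where $(-)^\vee$ denotes Matlis duality with respect to the injective hull of $R/\frakm$. Since the $R$-action is preserved and Matlis duality is faithful on the relevant (Artinian/finite) modules over $\hat{R}$, the equation $f^N\cdot H^i_{\frakm}(M) = 0$ is equivalent to $f^N\cdot E_i = 0$. Each $E_i$ is a finitely generated $R$-module (as $M$ is finite and $\om_R^\bullet$ has finite cohomology), so $(E_i)_f = 0$ implies $f^{N_i}\cdot E_i = 0$ for some $N_i$; taking $N := \max_i N_i$ then works. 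Note that $H^i_{\frakm}(M) = 0$ for $i > d$, so only finitely many $i$ need be considered. Thus it suffices to show $(E_i)_f = 0$ for $i = 0, \ldots, d-1$.

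Localizing yields $(E_i)_f = \Ext^{-i}_{R_f}(M_f, (\om_R^\bullet)_f)$, and since $M_f$ is projective over $R_f$ the derived Hom collapses to ordinary Hom, giving $\Ext^{j}_{R_f}(M_f, (\om_R^\bullet)_f) = \Hom_{R_f}(M_f, H^{j}((\om_R^\bullet)_f))$. It therefore suffices to verify that $H^{j}((\om_R^\bullet)_f) = 0$ for $j \neq -d$. Since $R_f$ is Cohen-Macaulay, the complex $(\om_R^\bullet)_\frakp$ is concentrated in a single degree $-\delta(\frakp) - \dim R_\frakp$ at every prime $\frakp \not\ni f$, where $\delta$ is the dimension function normalized by $\delta(\frakm) = 0$. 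In the setting of Proposition \ref{CM_split}, $R$ is the local ring at a point of an integral finite-type scheme, hence a catenary local domain, which forces $\delta(\frakp) + \dim R_\frakp = \dim R/\frakp + \dim R_\frakp = d$ uniformly, so $(\om_R^\bullet)_f$ lives in a single degree equal to $-d$; the desired vanishing, and hence the lemma, follows. I expect the only real subtlety to be this degree calculation; everything else is a direct and standard application of local duality combined with the CM and projectivity hypotheses (which are essential here, as the example $R = k\pot{x,y}$, $f = x$, $M = R/(y)$ shows that without projectivity of $M_f$ the analogous statement for $H^{d-1}_\frakm(M)$ can fail).
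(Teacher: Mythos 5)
Your proof follows essentially the same route as the paper's: reduce via local duality to showing $\Ext^{-i}_R(M,\omega_R^\bullet)_f = 0$ for $i < d$, then use that $(\omega_R^\bullet)_f$ sits in a single degree (Cohen--Macaulayness) together with projectivity of $M_f$. You have, however, correctly put your finger on a point the paper's own proof glosses over. The paper asserts $(\omega_R^\bullet)_f \simeq \omega_{R_f}[d]$ with a citation to Stacks 0A86 (which concerns \emph{local} CM rings, while $R_f$ is not local) and 0AWS. What is actually true is that for a prime $\frakp$ not containing $f$, $(\omega_R^\bullet)_\frakp$ sits in the single degree $-\dim R_\frakp - \dim(R/\frakp)$, and this is $-d$ uniformly only when $R$ is catenary and equidimensional; your observation that in the application $R$ is a catenary local domain (as it is the local ring of a normal, integral local model) is precisely the correct fix. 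Note, though, that Proposition \ref{CM_split} as stated does not literally say that $X$ is integral, so your appeal to it is slightly misattributed; the integrality comes from the ambient application to $M_{\{\mu\}}$, which has already been shown normal and whose generic fiber is irreducible.

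In fact, without an equidimensionality hypothesis the lemma as stated is false, so your extra hypothesis is not merely a convenience but is necessary. Take $R = k\pot{x,y,z}/(xy,xz)$ (a two-dimensional Noetherian complete local ring with a dualizing complex, with minimal primes $(x)$ and $(y,z)$), let $f = x+y$, and let $M = R$. Then $f$ is a nonzerodivisor, $R_f$ is Cohen--Macaulay (indeed $\Spec R_f$ is a disjoint union of a punctured plane and a punctured line, both regular), and $M_f = R_f$ is free. A direct calculation from the Koszul-type resolution $0 \to A \to A^2 \to A \to R \to 0$ over $A = k\pot{x,y,z}$ gives $H^{-1}(\omega_R^\bullet) = \Ext^2_A(R,A) \cong A/(y,z) = k\pot{x}$. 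Hence $H^1_\frakm(R)$ is Matlis dual to $k\pot{x}$, on which $R$ acts through $R \twoheadrightarrow R/(y,z) = k\pot{x}$, so $f$ acts as multiplication by $x$; since $x$ acts surjectively on $E_{k\pot{x}}(k) \cong H^1_\frakm(R)$, no power of $f$ annihilates $H^1_\frakm(R)$. So your instinct that the degree calculation is ``the only real subtlety'' is exactly right, and the catenary-domain hypothesis you add is what makes the step go through.
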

\begin{proof}
By the local duality theorem \cite[0AAK]{StaPro}, we have 
\[
\Ext^{-i}_R(M,\om_R^\bullet)^\wedge =\Hom_R(H^i_\frakm(M),E) = \Hom_{\hat{R}}(H^i_\frakm(M),E),
\]
where $(\cdot)^\wedge$ denotes the $\frakm$-adic completion, $\om_R^\bullet$ the normalized dualizing complex (cf.~\cite[0A7B]{StaPro}) and $E$ the injective hull of $k$. As $R_f$ is Cohen-Macaulay, the localized complex $(\om_R^\bullet)_f=\om_{R_f}[d]$ is concentrated in degree $d$ (cf.~\cite[0A86, 0AWS]{StaPro}), so that 
\[
\Ext^{-i}_R(M,\om_R^\bullet)_f\;=\;\Ext^{d-i}_{R_f}(M_f,\om_{R_f})\;=\;0
\] 
for $i=0,\ldots,d-1$ where we have used that $M_f$ is projective over $R_f$ for the last equation. Recall that $\om_R^\bullet$ is a cohomologically bounded complex whose cohomology groups are finite $R$-modules (cf.\,\cite[p.257]{Ha66} or \cite[0A7B]{StaPro}). Since $M$ is finite, each $R$-module $\Ext^{-i}_R(M,\om_R^\bullet)$ is finite as well, and hence there exists a uniform $N>\!\!>0$ such that $f^N$ kills each module $\Ext^{-i}_R(M,\om_R^\bullet) \otimes_R \hat{R} = \Ext^{-i}_R(M,\om_R^\bullet)^\wedge$. We conclude that $f^N$ kills each $H^i_\frakm(M)$ by Matlis duality, cf.~\cite[08Z9]{StaPro}, bearing in mind that $H^i_\frakm(M)$ satisfies the DCC because $M$ is finite and $(R,\frakm)$ is Noetherian local, cf.\,\cite[Prop.\,3.5.4]{BrHe}.
\end{proof}

\subsection{Proof of Theorem \ref{special_fiber_thm} ii\textup{)}}\label{CM_proof}
We begin with a general lemma.

\begin{lem}\label{elementary_lem}
Let $X$ be a flat scheme of finite type over a discrete valuation ring. 
\begin{enumerate}
\item[i)] Assume that the generic fiber $X_\eta$ is normal and the special fiber $X_s$ is reduced. Then $X$ is normal. 
\item[ii)] Assume that the generic fiber $X_\eta$ is Cohen-Macaulay. Then $X$ is Cohen-Macaulay if and only if its special fiber $X_s$ is Cohen-Macaulay if and only if its geometric special fiber $X_{\bar{s}}$ is Cohen-Macaulay.   
\end{enumerate}
\end{lem}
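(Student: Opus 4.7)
Both assertions are local on $X$, so I would work with the local ring $A = \calO_{X,x}$ for a point $x \in X$. Let $\pi$ be a uniformizer of the DVR; flatness of $X$ over the base means $\pi$ is a non-zero-divisor on $A$, and $\bar A := A/\pi A$ equals $\calO_{X_s,x}$ when $x \in X_s$.

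For part i), I would verify Serre's criterion $(R_1)+(S_2)$ for $A$, which is equivalent to normality since $A$ is Noetherian. For $(R_1)$, consider a height-one prime $\frakp \subset A$. If $\pi \notin \frakp$, then $A_\frakp$ is a localization of the normal ring $A[\nicefrac{1}{\pi}]$, hence regular. If $\pi \in \frakp$, then $\frakp$ is a minimal prime of the reduced ring $\bar A$, so $\bar A_\frakp$ is a field; since $\pi$ cuts out this field inside $A_\frakp$, the ring $A_\frakp$ is a one-dimensional regular local ring, i.e.\,a DVR. For $(S_2)$, at primes $\frakp \not\ni \pi$ normality of the generic fiber supplies the condition; at primes $\frakp \ni \pi$, flatness over the DVR gives the depth formula
\[
\on{depth}_\frakp A \,=\, \on{depth}_{\frakp/\pi}\bar A + 1,
\]
and reducedness of $\bar A$ implies $(S_1)$, so the right hand side is $\geq \min(2,\dim A_\frakp)$.

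For part ii), the same depth formula together with $\dim A_\frakp = \dim \bar A_{\frakp/\pi} + 1$ (valid since $\pi$ is a non-zero-divisor in the flat $\calO$-algebra $A$) shows that $A$ is Cohen-Macaulay if and only if $\bar A$ is Cohen-Macaulay, provided the generic fiber is already CM so that primes $\frakp \not\ni \pi$ pose no obstruction. This yields the equivalence of CM for $X$ and for $X_s$. The equivalence between CM of $X_s$ and of $X_{\bar s}$ follows from faithfully flat base change: CM is preserved under flat base change on the base and is detected after faithfully flat base change, cf.~\cite[045J, 0C0W]{StaPro}.

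I expect no substantive obstacle; the ingredients are entirely standard (the depth formula under flatness over a DVR, Serre's criterion, and faithfully flat descent of CM), and the content is essentially bookkeeping that all relevant properties glue correctly when $\pi$ is a non-zero-divisor with reduced quotient.
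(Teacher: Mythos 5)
Your proof is correct and is essentially the standard argument that the paper delegates to references: the paper's own proof of this lemma consists of citing \cite[Prop.~9.2]{PZ13} for part i) and \cite[0C6G, 045P]{StaPro} for part ii), and those sources prove exactly the Serre-criterion and depth-formula facts you have spelled out. Your self-contained version is sound — the $(R_1)$ case with $\pi\in\frakp$ correctly identifies $A_\frakp$ as a one-dimensional local ring with principal maximal ideal generated by a nonzerodivisor (hence a DVR), the $(S_2)$ check via $\on{depth}_\frakp A = \on{depth}_{\frakp/\pi}\bar A + 1$ together with $(S_1)$ for the reduced special fiber is exactly right, and the CM equivalence over $\pi$-containing primes plus faithfully flat base change for $X_s$ versus $X_{\bar s}$ is the intended argument. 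The only minor caveat is that ``CM is detected after faithfully flat base change'' deserves a word of justification (one needs that the fibers of $X_{\bar s}\to X_s$ are CM, which holds because they are localizations of $\kappa(x)\otimes_{\kappa(s)}\overline{\kappa(s)}$ for $\kappa(x)$ finitely generated over $\kappa(s)$), but this is a routine point and does not affect correctness.
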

\begin{proof} Part i) is \cite[Prop.~9.2]{PZ13}, and ii) is immediate from \cite[0C6G, 045P]{StaPro}. 
\end{proof}

Now let $F$ be a non-archimedean local field (either of mixed or equal characteristic), and fix a triple $(G,\{\mu\},\calG_\bbf)$ as in \S\ref{main_result_sec} where $G$ is defined over $F$. Let $M:=M_{(G,\{\mu\},\calG_\bbf)}$ be the associated local model over $\calO_E$, where $E$ is the reflex field. As the geometric generic fiber $M_{\bar F}$ is, by definition, a Schubert variety inside an affine Grassmannian which we assume to be normal, it is Cohen-Macaulay by Proposition \ref{normal_prop} i). By \cite[0380, 045V]{StaPro} the generic fiber $M_E$ is normal and Cohen-Macaulay as well. In view of Theorem \ref{special_fiber_thm} i) and Lemma \ref{elementary_lem} this implies that $M$ is normal. Here we are assuming that each maximal Schubert variety inside the admissible locus $\calA(G,\{\mu\})$ is normal. It remains to show that if $p>2$ then $M$ is also Cohen-Macaulay. By Lemma \ref{elementary_lem} ii) this is equivalent to the Cohen-Macaulayness of the geometric special fiber $M_{\bar{k}}=\calA(G,\{\mu\})$, cf.~Theorem \ref{special_fiber_thm} i). As the combinatorics of Iwahori-Weyl groups are the same in mixed and equal characteristic (cf.~\cite[Lem.~4.11]{HRb} for a precise statement), we may and do assume that $F\simeq k\rpot{t}$ is of equal characteristic, i.e., $M$ is a scheme in characteristic $p>0$. For this, we must remark that the group $G^\flat$ in (\ref{Gflat}) arising from the mixed characteristic situation is tamely ramified so that the tame ramification hypotheses on $G^\flat_\scon$ used in Theorem \ref{special_fiber_thm} is satisfied; this holds by the description of the maximal torus $\uT_1 \subset \uG_1$ given in \cite[Ex.\,4.14]{HRb}. Also we may and do assume that $k=\bar{k}$ is algebraically closed because the formation of local models commutes with unramified base change. Further, by \eqref{decom_adm_locus} the admissible locus takes the form
\begin{equation}\label{product_decom}
M_k\simeq \calA(G,\{\mu\})\simeq \calA(G,\{\mu\})^o\simeq \prod_{j\in J}\prod_{i=1}^{l_j}\calA(G_j,\{\mu_{i,1}^{(j)}\}+\ldots \{\mu_{i,m_j}^{(j)}\})^o,
\end{equation}
where $G_\ad=\prod_{j\in J}\Res_{F_j/F}(G_j)$ for absolutely simple $F_j$-groups $G_j$ and $m_j:=[F_j:F_j^{\on{ur}}]$, $l_j:=[F_j^{\on{ur}}:F]$ for the maximal unramified subextension $F_j/F_j^{\on{ur}}/F$. As products of locally Noetherian flat Cohen-Macaulay schemes are Cohen-Macaulay (cf.~\cite[0C0W, 045J]{StaPro}), we see that it is enough to proof the Cohen-Macaulayness of each $\calA(G_j,\{\mu_{i,1}^{(j)}\}+\ldots \{\mu_{i,m_j}^{(j)}\})^o$, i.e., we may and do assume that $G$ is absolutely almost simple. Also note that under \eqref{product_decom} the Schubert varieties in each factor are still normal by Proposition \ref{normal_prop} ii) so that our normality assumption still holds for the Schubert varieties in each absolutely almost simple factor.   

Summarizing the discussion, we have an {\em equal characteristic} local model $M$ attached with some absolutely almost simple group (so that Lemma \ref{key_lem_split} below is available) which satisfies the normality assumptions of Theorem \ref{special_fiber_thm}. We know that $M$ is a flat projective scheme over $\calO=\calO_E$ which is normal. Its generic fiber $M_E$ is Cohen-Macaulay, and its special fiber $M_k$ (=the admissible locus) is an effective Cartier divisor on $M$. We aim to show that $M$ is, as a whole, Cohen-Macaulay. 

The key to the proof is now the following lemma which is a direct consequence of \cite[Thm.~6.5]{Zhu14} (also this is the key step in the proof of the Coherence Conjecture).

\begin{lem}\label{key_lem_split}
Let $p>2$. Then the local model $M$ is Frobenius split compatibly with its special fiber $M_k\subset M$ viewed as a closed subscheme.
\end{lem}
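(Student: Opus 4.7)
The plan is to invoke \cite[Thm.~6.5]{Zhu14} essentially directly; the statement of the lemma is exactly what Zhu's theorem delivers in our setup. Our reductions in the preceding paragraphs have placed us in the equal characteristic setting over $k=\bar{k}$ with $G$ absolutely almost simple and tamely ramified, which is precisely the framework in which Zhu's construction applies. If necessary, I would further pass to the simply connected cover by combining Corollary \ref{Invariance_Cor} with Proposition \ref{Schubertadprop}, noting that the resulting map on local models is a finite birational universal homeomorphism and that the Frobenius splitting compatibility of interest transports along such a map (since source and target have the same underlying topological space, and the structure sheaves agree on a dense open).

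Concretely, the strategy is to realize $M$ as a closed subscheme of an ambient global affine Schubert variety $\tilde{M} = M_{\{\tilde{\mu}\}}$ attached to a coweight $\tilde{\mu}$ dominating $\{\mu\}$ --- equivalently, inside a suitable bounded piece of the BD Grassmannian $\Gr_\calG$ --- and then to invoke Zhu's theorem to produce a Frobenius splitting of $\tilde{M}$ which is \emph{simultaneously} compatible with the closed subscheme $M \subseteq \tilde{M}$ and with the special fiber $\tilde{M}_k$. The numerical hypothesis $p>2$ enters through Zhu's Mathieu-style construction involving the determinant line bundle pulled back via the adjoint representation --- the same line bundle $\calL$ appearing in Lemma \ref{line_bundle_equal} --- where $p>2$ is exactly the condition needed to obtain an honest splitting rather than merely a non-split extension.

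Once such a splitting of $\tilde{M}$ is in hand, Lemma \ref{split_lem} applied to the pair of closed subschemes $M,\, \tilde{M}_k \subseteq \tilde{M}$ yields a Frobenius splitting of $M$ compatible with the intersection $M \cap \tilde{M}_k = M_k$, which is the conclusion we seek. The only substantive step is the appeal to Zhu; the main potential point of concern is verifying that his construction really does produce a splitting with the claimed \emph{double} compatibility (with both $M$ and the ambient special fiber), rather than with only one of these. This is however the content of \cite[Thm.~6.5]{Zhu14}, so no genuine obstacle is expected beyond matching the bookkeeping of Zhu's notation with the data $(G,\{\mu\},\calG_\bbf)$ at hand.
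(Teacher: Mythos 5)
Your proposal is essentially the same as the paper's: invoke Zhu's Theorem 6.5 to obtain an ambient $\calO$-scheme $X$ containing $M$ as a closed subscheme, with $X$ Frobenius split compatibly with both $M$ and its own special fiber $X_k$, and then apply Lemma \ref{split_lem} to the pair of compatibly split subschemes $M,\,X_k\subset X$ to conclude $M$ is split compatibly with $M\cap X_k=M_k$. One caution: your ``if necessary'' aside about passing to the simply connected cover is both unneeded here (the reductions preceding the lemma already place us in the absolutely simple, equal-characteristic, tamely ramified situation that Zhu treats) and not sound as stated, since Frobenius splitting is a property of the structure sheaf rather than the underlying space, and a finite birational universal homeomorphism that is not an isomorphism does not in general transport splittings in either direction.
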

\begin{proof} In \cite[Thm.~6.5]{Zhu14}, a $\calO$-scheme $X$ together with a closed immersion $M\subset X$ is constructed such that $X$ is Frobenius split compatibly with both $M$ and its special fiber $X_k$. Hence, Lemma \ref{split_lem} implies that $M$ is Frobenius split compatibly with $M\cap X_k=M_k$.
\end{proof}

\begin{cor}
For $p>2$, the local model $M$ splits relative to its special fiber $M_k\subset M$ viewed as an effective Cartier divisor.
\end{cor}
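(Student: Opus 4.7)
The proof will be essentially a one-line deduction: combine Lemma \ref{key_lem_split} with Lemma \ref{nice_lem}. The plan is the following.

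First, I would note that $M_k \subset M$ genuinely is an effective Cartier divisor: since $M$ is $\calO$-flat (by construction of the local model as a scheme-theoretic closure), a uniformizer $\pi_E \in \calO$ is a non-zero divisor on every local ring of $M$, and $M_k = V(\pi_E)$ is cut out locally by this regular section. So the hypotheses of Lemma \ref{nice_lem} applied to the pair $(M, M_k)$ make sense.

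Next, Lemma \ref{key_lem_split} (available for $p > 2$) says precisely that $M$ is Frobenius split and admits a splitting which is compatible with $M_k$ viewed as a closed subscheme. Feeding this into Lemma \ref{nice_lem} with $X = M$ and $D = M_k$ yields that $M$ splits relative to $(p-1)\cdot M_k$. Since $p > 2$ gives $p-1 \geq 1$, the factorization
\[
\calO_M \longto F_*\calO_M(M_k) \hookto F_*\calO_M\bigl((p-1)\cdot M_k\bigr)
\]
invoked in the last sentence of Lemma \ref{nice_lem} then shows that $M$ splits relative to $M_k$ itself. This is the desired conclusion.

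The argument is entirely formal once one has Lemma \ref{key_lem_split} in hand; no further obstacle is expected, since the only thing to verify beyond citing the two lemmas is the (immediate) fact that $M_k$ is Cartier in $M$.
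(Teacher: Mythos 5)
Your proposal is correct and matches the paper's proof exactly: the paper's one-line argument is precisely to combine Lemma \ref{key_lem_split} with Lemma \ref{nice_lem}, and your verification that $M_k$ is an effective Cartier divisor (via $\calO$-flatness of $M$) is the right point to check before invoking Lemma \ref{nice_lem}. One small remark: the hypothesis $p>2$ enters only through Lemma \ref{key_lem_split} (which relies on Zhu's Theorem 6.5), not through the factorization $\calO_M\to F_*\calO_M(M_k)\hookto F_*\calO_M((p-1)M_k)$, since $p-1\geq 1$ holds for every prime.
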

\begin{proof}
This is follows from Lemmas \ref{key_lem_split} and \ref{nice_lem}.

\end{proof}

As we already know that $M\backslash M_k=M_E$ is Cohen-Macaulay, we can now apply Proposition \ref{CM_split} to conclude that $M$ (and hence $M_k=\calA(G,\{\mu\})$) is Cohen-Macaulay. It remains to identify the dualizing sheaf on the local model.

\begin{lem}
Let $M=M_{(G,\{\mu\}, \calG)}$ be a local model in either mixed or equal characteristic defined over the discrete valuation ring $\calO=\calO_E$. If $M$ is normal and Cohen-Macaulay, then the dualizing sheaf is given by 
\[
\om_M\;=\; (\Om_{M/\calO}^d)^{*,*},
\]
where $d=\dim(M_E)$ is the dimension of the generic fiber.
\end{lem}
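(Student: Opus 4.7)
\medskip\noindent\textbf{Proof plan.} Since $\calO$ is regular with $\omega_{\calO}\simeq \calO$ and $f\co M\to \Spec(\calO)$ is Cohen-Macaulay of pure relative dimension $d$, the absolute dualizing sheaf $\om_M$ agrees with the relative dualizing sheaf $\om_{M/\calO}$ in this setting. The strategy is then to identify both $\om_{M/\calO}$ and $(\Om^d_{M/\calO})^{*,*}$ with the pushforward $j_*\Om^d_{U/\calO}$, where $j\co U\hookto M$ is the inclusion of the smooth locus of $f$.

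First I would show that $M\smallsetminus U$ has codimension $\geq 2$ in $M$. On the generic fiber $M_E$, normality together with Cohen-Macaulayness (which follow from Proposition \ref{normal_prop} i)) give regularity in codimension one, so the non-smooth locus of $M_E\to \Spec(E)$ has codimension $\geq 2$ in $M_E$, hence in $M$. On the special fiber $\overline{M}_{\{\mu\}}=\calA(G,\{\mu\})$, Theorem \ref{special_fiber_thm} i) gives that each irreducible component is normal, so its singular locus has codimension $\geq 2$ inside the component (and thus codimension $\geq 3$ in $M$), while pairwise intersections of components of $\overline{M}_{\{\mu\}}$ have codimension $\geq 1$ in $\overline{M}_{\{\mu\}}$, i.e.\,codimension $\geq 2$ in $M$. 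Combining yields the required bound on $M\smallsetminus U$.

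Next, on $U$ the morphism $f|_U$ is smooth of relative dimension $d$, so $\om_{M/\calO}|_U=\Om^d_{U/\calO}$, which is locally free. It remains to check the Hartogs-type extension property on both sides. For the reflexive hull, $(\Om^d_{M/\calO})^{*,*}$ is reflexive by construction and $M$ is normal, hence $(\Om^d_{M/\calO})^{*,*}=j_*((\Om^d_{M/\calO})^{*,*}|_U)=j_*\Om^d_{U/\calO}$ since the restriction is locally free. For $\om_M=\om_{M/\calO}$, the key point is that on a Cohen-Macaulay scheme the dualizing sheaf is an $S_2$ coherent sheaf (e.g.\,by local duality, cf.\,the argument of Lemma \ref{torsion_local_coho}); combined with the normality of $M$ and the codimension bound on $M\smallsetminus U$, one gets $\om_M=j_*(\om_M|_U)=j_*\Om^d_{U/\calO}$. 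Comparing the two identifications gives the desired equality $\om_M=(\Om^d_{M/\calO})^{*,*}$.

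The main technical point is the codimension $\geq 2$ bound on the non-smooth locus of $f$, which is precisely where the geometric information of Theorem \ref{special_fiber_thm} i) (namely that every component of the special fiber is normal) enters. Everything else is a standard application of the Hartogs extension property for $S_2$ sheaves on normal schemes. Note that the proof does not require the $p>2$ hypothesis; only normality and Cohen-Macaulayness of $M$ are used, which is why the statement is phrased conditionally.
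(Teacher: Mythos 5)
Your overall strategy — restrict to the smooth locus $U \subset M$ over $\calO$, show $M\smallsetminus U$ has codimension $\geq 2$, identify both $\om_M$ and $(\Om^d_{M/\calO})^{*,*}$ with $j_*\Om^d_{U/\calO}$ using reflexivity/$S_2$, then extend — is essentially the paper's proof. The paper likewise uses reflexivity of both sheaves, an isomorphism $\Om^d_{M/\calO}|_U \simeq \om_M|_U$, and extension across a codimension-$\geq 2$ complement via the Hartogs property for reflexive sheaves on a normal scheme.

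The gap is in how you establish the codimension bound for $M\smallsetminus U$ on the generic fiber. You argue: $M_E$ is normal, hence $R_1$, hence the non-smooth locus of $M_E \to \Spec(E)$ has codimension $\geq 2$. But regularity in codimension one does not imply smoothness in codimension one over an imperfect base field, and in the equal characteristic case $E$ is a finite extension of $k\rpot{t}$, hence imperfect. So this step fails as written. (The special-fiber part of your argument is fine since $k_E$ is perfect.) The paper does not derive the bound from abstract normality at all; it cites \cite[Thm.\,6.12]{HRa} and \cite[Thm.\,5.14]{HRb}, which establish the codimension-$\geq 2$ bound on the non-smooth locus of $M \to \Spec(\calO)$ directly for local models. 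If you wanted to avoid citing those, the correct route on the generic fiber is not via $R_1$ but via the orbit structure: the open cell $\Gr_G^{\{\mu\}}$ is smooth over $E$ as a homogeneous space for a smooth group scheme, and its complement in $\Gr_G^{\leq\{\mu\}}$ has codimension $\geq 2$ by the standard dimension formula $\langle \mu - \lambda, 2\rho \rangle \geq 2$ for dominant $\lambda < \mu$.
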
  

\begin{proof} Both sheaves  $(\Om_{M/\calO}^d)^{*,*}$ and $\om_M$ are reflexive: for the first this is clear, and for the second this is \cite[Lem.~3.7.5]{Kov} using the normality of $M$. Let $U:=(M)^{\on{sm}}$ be the locus which is smooth over $\calO$. It follows from \cite[Thm.~6.12]{HRa} (and \cite[Thm.~5.14]{HRb} for Weil restricted groups in mixed characteristic) that the complement $M\backslash U$ has codimension $\geq 2$. Also, by \cite[0EA0]{StaPro}, there is a map $\Om^d_{M/\calO}\to\om_M$ which is an isomorphism restricted to $U$. Thus, we get an isomorphism
\[
\big(\Om^d_{M/\calO}\big)^{*,*}|_U\simeq \Om_{U/\calO}^d\simeq \om_U=\om_M|_U,
\]
which by the normality of $M$ and the reflexivity of both sheaves $(\Om_{M/\calO}^d)^{*,*}$, $\om_M$ extends to all of $M$, cf.~\cite[Prop.~1.6]{Ha80} (see also \cite[0EBJ]{StaPro}).
\end{proof}

\end{document}